\newtheorem{thm}{Theorem}[section]
\newtheorem{defi}[thm]{Definition}
\newtheorem{conjecture}[thm]{Conjecture}
\newtheorem{problem}[thm]{Problem}
\newtheorem{lemma}[thm]{Lemma}
\newtheorem{cor}[thm]{Corollary}
\def\@cite#1#2{{\normalfont[{\bfseries#1\if@tempswa , #2\fi}]}}
\DeclareMathOperator{\SN}{SN}
\DeclareMathOperator{\cyc}{cyc}
\DeclareMathOperator{\trn}{trn}
\newcommand{\strong}[3]{\SN^{#2}_{#3}(#1)}
\newcommand{\floor}[1]{\left\lfloor #1 \right\rfloor}
\newcommand{\ivec}{\mathbf{i}_{\mathcal{P}}}
\newcommand{\Ivec}{I_{\mathcal{P}}}
\newcommand{\uIvec}{I^\mu_{\mathcal{P}}}
\newcommand{\uvec}{\mathbf{u}}
\newcommand{\lattice}{L_{\mathcal{P}}}
\newcommand{\ulattice}{L^{\mu}_{\mathcal{P}}}
\newcommand{\uplattice}{L^{\mu}_{\mathcal{P'}}}
\title{Cyclic triangle factors in regular tournaments}
\author{Lina Li}
\address{
  Department of Mathematics,
  University of Illinois at Urbana-Champaign, 
  Urbana, Illinois 61801, USA.
}
\email{linali2@illinois.edu}
\author{Theodore Molla}
\address{
  Department of Mathematics and Statistics,
  University of South Florida, Tampa, Florida 33620, USA. 
  Research is partially supported by NSF Grant DMS-1500121.
} 
\email{molla@usf.edu}
\date{\today}
\begin{document}

\begin{abstract}
  Both Cuckler and Yuster independently conjectured that
  when $n$ is an odd positive multiple of $3$
  every regular tournament on $n$ vertices contains a collection of $n/3$ 
  vertex-disjoint copies of the cyclic triangle.
  Soon after, Keevash \& Sudakov proved that if 
  $G$ is an orientation of a graph on $n$ vertices 
  in which every vertex has both indegree and outdegree at least $(1/2 - o(1))n$, 
  then there exists a collection of vertex-disjoint cyclic triangles
  that covers all but at most $3$ vertices.
  In this paper, we resolve the conjecture of Cuckler and Yuster for sufficiently large $n$.
\end{abstract}

\maketitle

\section{Introduction}

Let $H$ and $G$ be graphs or directed graphs. An \textit{$H$-tiling} of $G$ 
is a collection of vertex-disjoint copies of $H$ in $G$.
An $H$-tiling $\mathcal{C}$ \textit{covers} the set $V(\mathcal{C}) = \bigcup_{C \in \mathcal{C}} V(C)$,
and is called \textit{perfect} or an \textit{$H$-factor} if it covers $V(G)$.

The celebrated Hajnal-Szemer\'edi~Theorem \cite{hajnal1970pcp} states that
for every positive integer $r$, if $n$ is a positive multiple of $r$ and $G$ is a graph on $n$
vertices such that $\delta(G) \ge (1 - 1/r)n$, then $G$ contains a $K_r$-factor.
The case when $r = 3$ is a corollary of an earlier result of Corr\'adi \& Hajnal \cite{corradi1963maximal}.

In this paper, we consider a similar problem in the context of \textit{oriented graphs}, which
are orientations of simple graphs, i.e., oriented graphs are directed graphs in which
there is at most one directed edge between every pair of vertices and no loops.
A \textit{tournament} is an orientation of a complete graph.
For an oriented graph $G$ and $v \in V(G)$,
we denote the 
\textit{out-neighborhood of $v$} and \textit{in-neighborhood of $v$} by $N^+(v)$ and $N^-(v)$, respectively.
We let $N(v) = N^+(v) \cup N^-(v)$ be the \textit{neighborhood of $v$},
and we let $d^+(v) = |N^+(v)|$ and $d^-(v) = |N^-(v)|$ be the \textit{outdegree} and \textit{indegree} of $v$, respectively. 
The \textit{minimum semidegree of $G$} is 
\begin{equation*}
  \delta^0(G) = \min_{v \in V(G)} \{\min\{d^+(v), d^-(v)\}\}.
\end{equation*}
An oriented graph $G$ on $n$ vertices is a \textit{regular tournament} if $d^+(v) = d^-(v) = \frac{n-1}{2}$ for every $v \in G$.

A tournament is \textit{transitive} if it contains no directed cycles, and the unique transitive tournament
on $r$ vertices is denoted $TT_r$.
Up to isomorphism, there are two different tournaments on three vertices: $TT_3$ and the three 
vertex cycle in which the edges
are consistently oriented, which we denote by $C_3$.
We call $C_3$ and $TT_3$ the cyclic and transitive triangles, respectively

There has been some prior work on minimum degree conditions that force
an $H$-factor in directed graphs.
See \cite{wang2000independent} and \cite{CDMT} for work on directed graphs,
and \cite{ottri} and \cite{yuster2003tiling} for oriented graphs.
Also, \cite{treglown2012note} contains many additional interesting embedding problems for oriented graphs.
This paper focuses on the following conjecture that Cuckler and Yuster made independently.
\begin{conjecture}[Cuckler 2008 \cite{cuckler}, Yuster 2007 \cite{yuster2007combinatorial}]
  \label{conj-main}
  If $n$ is an odd positive multiple of $3$, then 
  every regular tournament on $n$ vertices has a cyclic triangle factor.
\end{conjecture}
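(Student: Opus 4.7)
The plan is to combine the absorbing method with the near-perfect tiling of Keevash and Sudakov. The aim is to construct a small \emph{absorbing set} $A \subseteq V(G)$ of size $o(n)$ with $3 \mid |A|$, satisfying: $G[A]$ has a $C_3$-factor, and for every triple $T \subseteq V(G) \setminus A$, $G[A \cup T]$ also has a $C_3$-factor. Given such an $A$, applying the Keevash--Sudakov theorem to $G - A$ (which still has minimum semidegree $(\tfrac12 - o(1))(n - |A|)$) produces a $C_3$-tiling whose uncovered set $R$ satisfies $|R| \le 3$; the divisibility $3 \mid (n - |A|)$ forces $|R| \in \{0, 3\}$, and the absorbing property of $A$ then completes the factor.

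Call a set $L \subseteq V(G) \setminus T$ a \emph{$T$-absorber} for a triple $T = \{u,v,w\}$ if both $G[L]$ and $G[L \cup T]$ admit $C_3$-factors (so necessarily $3 \mid |L|$). The construction of $A$ reduces to a counting lemma stating that every triple $T$ has polynomially many absorbers of some constant size $k$. Given this, a standard random selection (include each vertex in a candidate set with an appropriate probability, discard those lying in too many conflicts, and then invoke Chernoff-type concentration with a union bound over the $\binom{n}{3}$ triples) yields $A$. The counting lemma itself should follow by exploiting the dense semidegree of $G$: starting from three cyclic triangles $C^u, C^v, C^w$ containing $u$, $v$, $w$ respectively, one reshuffles the six non-$T$ vertices into two new cyclic triangles while forming a third cyclic triangle from $\{u,v,w\}$; the many choices for the three starting triangles yield the polynomial lower bound.

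The main obstacle is that the absorber count degenerates when $G$ is close to a \emph{blow-up of $C_3$}: a partition $V(G) = V_1 \cup V_2 \cup V_3$ in which nearly all edges are directed $V_1 \to V_2 \to V_3 \to V_1$. In such $G$, cyclic triangles are essentially forced to pick exactly one vertex from each part, so a triple $T$ with two vertices in the same $V_i$ admits very few absorbers. We therefore anticipate a stability-type case split. In the \emph{non-extremal case} (when $G$ is far from every such blow-up) the argument above goes through, and in fact the quasi-randomness of the tournament can be leveraged to make the absorber count very robust. In the \emph{extremal case}, the rigidity imposed by regularity forces each $|V_i|$ to equal $n/3$ and the bipartite graph between consecutive parts to be nearly biregular; one can then build the $C_3$-factor directly via a perfect matching argument in the auxiliary tripartite structure, after using local exchanges to relocate the small number of atypical vertices. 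Identifying the precise extremal family and carrying out the matching/exchange analysis uniformly is likely the most delicate part of the proof; blow-ups of other small regular tournaments (or hybrid configurations) may need to be dispatched in the same spirit.
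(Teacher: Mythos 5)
Your high-level strategy matches the paper's: the stability dichotomy (close to a $C_3$-blow-up vs.\ far from one), the Keevash--Sudakov near-tiling as a black box in the non-extremal case, an absorbing structure to repair the leftover triple, and a direct construction in the extremal case.  You have also correctly identified the extremal family.  But there are two places where the proposal, as written, has real gaps rather than just omitted details.

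First, the absorber-counting step is not just delicate but actually ill-posed as you describe it.  You propose taking cyclic triangles $C^u, C^v, C^w$ through $u,v,w$ and ``reshuffling the six non-$T$ vertices into two new cyclic triangles while forming a third cyclic triangle from $\{u,v,w\}$,'' but a generic triple $T$ need not induce a cyclic triangle at all (in a tournament, half the triples are transitive), so one cannot hope to put $T$ itself on a $C_3$.  The correct shape of a size-$6$ absorber sends each of $u,v,w$ into a \emph{separate} $C_3$ using two fresh vertices, with the six fresh vertices also admitting a $C_3$-factor among themselves; but even so, it is not true in the non-extremal regime that every triple has $\Omega(n^6)$ such absorbers, and ``quasi-randomness'' alone will not rescue this because a tournament with $\delta^0 = (1/2 - o(1))n$ can be quite far from quasi-random.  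This is exactly why the paper does \emph{not} use a per-triple absorber count: it instead adopts the Lo--Markstr\"om / Han reachability framework, where two vertices $x,y$ are connected by $(H,x,y)$-linking $(3\ell-1)$-sets with $\ell$ allowed to be as large as $1000$, and the absorbing lemma requires the whole vertex set to be $(\beta,\ell)$-closed.  Establishing closure is the real work (the paper's Lemmas~\ref{lem:merge}--\ref{lem:not-closed-2}), carried out through a partition into closed classes, the lattice $\ulattice$ of robust edge-vectors, and elimination of $2$-transferrals.  Your proposal assumes this step ``goes through'' but the argument is precisely what needs proving, and the naive one-step absorber you sketch would fail.

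Second, the extremal case is waved at but the genuinely tricky point is hidden: once you clean the tournament into three parts $U_1, U_2, U_3$ close to $n/3$ with nearly all edges $U_1 \to U_2 \to U_3 \to U_1$, the sizes $|U_i|$ need not be congruent mod $3$, and regularity does \emph{not} force $|U_i| = n/3$ exactly (it only rules out the divisibility barrier where all three residues differ).  One must then argue (as the paper does) that there is a single corrective cyclic triangle straddling the parts in the right proportions to restore the mod-$3$ balance before invoking a balanced-tripartite triangle-factor theorem such as Johansson's.  This needs care: one must also remove the few atypical vertices first and show that each atypical vertex can be integrated into a cyclic triangle consistent with the mod-$3$ bookkeeping, which is a nontrivial case analysis in the paper.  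Your ``perfect matching argument in the auxiliary tripartite structure'' gestures in the right direction but leaves the congruence repair and the atypical-vertex relocation unaddressed.
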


Keevash \& Sudakov then proved the following approximate version of this conjecture. 
\begin{thm}[Keevash \& Sudakov 2009 \cite{keevash2009triangle}]
  \label{thm:KS}
  There exists $c > 0$ and $n_0$ such
  that for every $n \ge n_0$ the following holds.
  If $G$ is an oriented graph on $n$ vertices
  and $\delta^0(G) \ge (1/2 - c)n$,
  then there exists a cyclic triangle tiling that covers
  all but at most $3$ vertices. 
\end{thm}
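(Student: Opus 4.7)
\smallskip

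\noindent\textbf{Proof proposal.}
The plan is to follow the standard regularity-based approach for approximate tiling problems. First, I would apply a directed version of Szemer\'edi's regularity lemma to $G$ to obtain an equitable partition of $V(G)$ into clusters $V_1, \ldots, V_k$ together with a reduced oriented graph $R$ on vertex set $[k]$. In $R$, we orient an arc from $i$ to $j$ whenever the bipartite pair $(V_i, V_j)$ is $\varepsilon$-regular, has density at least some small constant $d$, and more than half of the few edges between $V_i$ and $V_j$ are oriented from $V_i$ to $V_j$. A standard edge-counting shows that $R$ inherits a minimum semidegree of roughly $(1/2 - c')k$, where $c' = c'(c, \varepsilon, d) \to 0$ as the parameters tend to $0$.

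Next, I would find a near-perfect cyclic-triangle tiling of $R$. The key observation is that in any oriented graph with minimum semidegree close to $n/2$, nearly every vertex $v$ lies in many cyclic triangles: since $d^+(v) + d^-(v) \ge (1 - 2c)n$, most in-neighbors $u$ of $v$ satisfy $|N^+(v) \cap N^-(u)| \ge (1/2 - O(c))n$. I would then either set up the LP relaxation of $C_3$-tiling and show that a fractional solution covers all but $o(k)$ vertices of $R$, or give a direct greedy/random argument producing an integer cyclic-triangle tiling of $R$ that leaves only $o(k)$ clusters uncovered.

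Once we have cyclic triangles $T_1, \ldots, T_m$ in $R$, each $T_\ell$ corresponds to three clusters $V_{i_1}, V_{i_2}, V_{i_3}$ forming an $\varepsilon$-regular oriented blow-up of a $C_3$ with density at least $d$ between each consecutive pair. A standard slicing/greedy argument (essentially a blow-up-style lemma for cyclic triangles) then extracts an almost-perfect $C_3$-tiling inside each blow-up, wasting only $\varepsilon|V_{i_1}|$ vertices per cluster. Combining the tilings from each $T_\ell$, together with the clusters omitted in the regularity partition, yields a cyclic triangle tiling covering all but $o(n)$ vertices of $G$.

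The main obstacle is improving the ``$o(n)$ uncovered vertices'' bound down to the constant $3$ promised by the theorem. I would handle this with an absorbing method carried out \emph{before} the regularity step. Set aside a small random subset $A \subseteq V(G)$ with the property that, for any very small set $U \subseteq V(G) \setminus A$, one can find a cyclic triangle tiling of $A \cup U$ that covers every vertex of $U$. Building such an absorber reduces to showing that every vertex $v$ lies in many ``swap gadgets'' that can either absorb $v$ together with two $A$-vertices or just tile three $A$-vertices; this is a supersaturation calculation that follows from the semidegree hypothesis. After applying the approximate-tiling step to $V(G) \setminus A$ and then absorbing the $o(n)$ leftover vertices into $A$, only a bounded number of vertices remain uncovered, and a short final extremal argument brings this count down to at most $3$.
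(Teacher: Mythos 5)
The statement you are asked to prove is cited in the paper as an external result of Keevash \& Sudakov; the paper itself contains no proof of it, so there is nothing internal to compare against. Evaluating your proposal on its own terms, the regularity-plus-blow-up portion giving a $C_3$-tiling with $o(n)$ uncovered vertices is sound and standard. The serious gap is the final step, and it is not a small one.

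Your plan hinges on building an absorber via ``swap gadgets'' supported by supersaturation. But this is precisely the absorbing-method step that requires \emph{closedness} --- that any two vertices are joined by many linking sets --- and closedness can genuinely fail under the hypotheses of the theorem. The present paper's entire architecture reflects this: its Lemma~\ref{lem:not-closed} shows that when $V(G)$ is not $(H(G),\beta,\ell)$-closed, $G$ must be $\gamma$-extremal, and the extremal case is then handled by a separate, several-page argument (Section~\ref{sec:extremal}). Near an extremal configuration (three almost-independent-in-the-wrong-direction parts of size roughly $n/3$), a random set $A$ will \emph{not} have the universal absorbing property you posit, because two vertices lying in different parts of the near-barrier need not be reachable from one another. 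Your closing sentence, ``a short final extremal argument brings this count down to at most $3$,'' papers over exactly the part of the proof that carries the real difficulty; note that even with a divisibility barrier the theorem still demands only $3$ leftover vertices, so whatever argument you give must cover extremal $G$ as well, and absorption alone cannot.

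There is also an internal ambiguity in how you formulate the absorber: you ask that a tiling of $A\cup U$ cover ``every vertex of $U$,'' without requiring it to cover $A$. Read literally, this just trades the leftover $U$ for a leftover inside $A$ of size up to $|A|=\eta n$, which is no improvement. Read as the standard absorber (a perfect tiling of $A\cup W$ for all small $W$ of appropriate size), you need $|W|\equiv 0\pmod 3$ and, again, closedness. Either way the absorption step as written does not close the gap to $\le 3$.

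For the record, Keevash \& Sudakov's actual proof of this theorem is a different and more direct argument; they do not assemble an absorber in the sense of R\"odl--Ruci\'nski--Szemer\'edi. Their route is an approximate-tiling argument followed by an iterative augmentation/rotation scheme showing that a maximal cyclic triangle packing leaving four or more vertices uncovered can always be enlarged, with a careful case analysis that does not presuppose a non-extremal structure. If you want to pursue your absorption-based route, the honest version of it is essentially the structure of the paper you are reading: prove closedness in the non-extremal case, invoke absorption there, and give a genuinely separate argument in the extremal case --- at which point you have proved something closer to the paper's Theorem~\ref{thm:main} than to the weaker all-but-$3$ statement you were aiming for.
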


A corollary of our main result resolves Conjecture~\ref{conj-main} for large tournaments.
To see that the resolution of Conjecture~\ref{conj-main} is a sharp result, 
consider the following construction from \cite{keevash2009triangle}.
For a positive integer $m$, 
let $G$ be a tournament on $3m$ vertices in which the edges are oriented
so that there exists a partition $\{V_1, V_2, V_3\}$ of $V(G)$ such that 
\begin{itemize}
  \item $|V_1| = m-1$, $|V_2| = m$, and $|V_3| = m+1$;
  \item for $i \in [3]$, the oriented graph induced by $V_i$ has minimum semidegree $\floor{(|V_i|-1)/2}$; and
  \item no edges are directed from $V_2$ to $V_1$, from $V_3$ to $V_2$, and from $V_1$ to $V_3$.
\end{itemize}
We have that,
\begin{equation*}
  \delta^0(G) = 
  \begin{cases}
    (|V_2| - 2)/2 + |V_1| = \frac{n - 4}{2} & \text{if $m$ is even} \\
    (|V_2| - 1)/2 + |V_1| = \frac{n - 3}{2}  & \text{if $m$ is odd}. 
  \end{cases}
\end{equation*}
To see why $G$ has no cyclic triangle factor, 
let $\mathcal{C}$ be a cyclic triangle tiling of $G$ and note that,
for every $C \in \mathcal{C}$, either $C$ has one vertex in each of $V_1$, $V_2$ and $V_3$,
or $V(C) \subseteq V_i$ for some $i \in [3]$.
Therefore,
\begin{equation*}
  |V(\mathcal{C}) \cap V_1| \equiv 
  |V(\mathcal{C}) \cap V_2| \equiv 
  |V(\mathcal{C}) \cap V_3| \pmod 3.
\end{equation*}
Because $|V_1|$, $|V_2|$ and $|V_3|$ are distinct modulo $3$, we have that $V(\mathcal{C}) \neq V(G)$.
Motivated by this example we make the following definitions.
\begin{defi}[Divisibility barrier and $\gamma$-extremal]
  Let $G$ be an oriented graph.
  Call a partition $\mathcal{P}$ of $V(G)$ a \textit{divisibility barrier}
  if either $\mathcal{P}$ is the trivial partition $\{V(G)\}$ and $|V(G)|$ is not divisible by $3$, 
  or if $\mathcal{P}$ has exactly three parts, $V_1$, $V_2$, and $V_3$, such that
  there are no edges directed from $V_2$ to $V_1$, from $V_3$ to $V_2$, and from $V_1$ to $V_3$; 
  and $|V_1|$, $|V_2|$, and $|V_3|$ are not all equivalent modulo $3$.

  For $\gamma > 0$,
  call a partition $\{V_1, V_2, V_3\}$ of $V(G)$ a \textit{$\gamma$-extremal partition} of $G$ if, for every $i\in[3]$, 
  \begin{equation*}
    \left(1/3 -\gamma\right)n\leq|V_1|, |V_2|, |V_3| \leq \left(1/3 +\gamma\right)n, 
  \end{equation*}
  and the number of edges directed from $V_2$ to $V_1$, from $V_3$ to $V_2$, and from $V_1$ to $V_3$ are each at most
  $\gamma n^2$. 
  An oriented graph is called \textit{$\gamma$-extremal} if it contains a $\gamma$-extremal partition.
\end{defi}

The following is the main result of this paper.
\begin{thm}
  \label{thm:main}
  There exists $c > 0$ and $n_0$ such that for every $n \ge n_0$ 
  and for every oriented graph $G$ on $n$ vertices with $\delta^0(G) \ge (1/2 - c)n$
  the following holds.
  $G$ has a cyclic triangle factor if and only if $G$ does not have a divisibility barrier. 
\end{thm}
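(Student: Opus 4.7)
The ``only if'' direction is the calculation already spelled out in the excerpt: every $C_3$ in a tiling either lies inside a single part of $\mathcal{P}$ or is transversal to the three parts, so the residues $|V(\mathcal{C}) \cap V_i| \pmod 3$ are all equal. For the ``if'' direction, I would fix constants $c \ll \gamma \ll 1$ and split on whether $G$ is $\gamma$-extremal. In either case Theorem~\ref{thm:KS} already supplies a $C_3$-tiling covering all but at most three vertices; the real task is to absorb those last vertices while respecting the divisibility constraints, which is where the no-divisibility-barrier hypothesis enters. Observe also that the hypothesis rules out $\mathcal{P} = \{V(G)\}$, so $3 \mid n$ may be assumed throughout.

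\emph{Non-extremal case.} Assume $G$ is not $\gamma$-extremal. I would implement the absorbing method of R\"odl--Ruci\'nski--Szemer\'edi. The key step is an \emph{absorbing lemma}: for every ordered triple $(u_1,u_2,u_3) \in V(G)^3$ there exist $\Omega(n^t)$ vertex sets $S$ of a fixed size $t = O(1)$ such that both $G[S]$ and $G[S \cup \{u_1,u_2,u_3\}]$ admit a $C_3$-factor. Non-extremality is exactly what is needed to rule out local mod-$3$ obstructions to absorbability; a supersaturation count for $C_3$'s combined with the semidegree hypothesis should produce the required absorbers. A standard probabilistic selection then yields an absorbing set $A$ of size $o(n)$ that swallows any sufficiently small remnant. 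Applying Theorem~\ref{thm:KS} to $G - A$, after trimming a handful of triangles from $A$ so that $|V(G) \setminus A|$ is a multiple of $3$, covers $V(G) \setminus A$ except for at most three vertices, which $A$ then absorbs.

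\emph{Extremal case.} Assume $G$ admits a $\gamma$-extremal partition $\{V_1,V_2,V_3\}$. Since $G$ has no divisibility barrier and $3 \mid n$, we must have $|V_1| \equiv |V_2| \equiv |V_3| \pmod 3$. The plan is first to clean the partition by reassigning a bounded number of ``atypical'' vertices, those with too many wrong-direction edges, to the part where they behave well; then to rebalance the part sizes exactly to $n/3$ by extracting a small family of carefully chosen cyclic triangles that collectively shift the correct number of vertices across parts. After rebalancing, the bipartite oriented graphs from $V_1$ to $V_2$, from $V_2$ to $V_3$, and from $V_3$ to $V_1$ are nearly complete, so one can greedily peel off $n/3$ transversal $C_3$'s, reserving a small in-part absorbing structure to catch the final few vertices inside each $V_i$.

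The principal obstacle will be the extremal case, specifically the rebalancing step. The no-divisibility-barrier hypothesis is a global, modular condition, so one must realize the required residue corrections modulo $3$ by a sequence of local swaps that simultaneously uses only $o(n)$ vertices and never damages the internal semidegree of the parts that is needed in the endgame. Showing that the common residue $|V_1| \equiv |V_2| \equiv |V_3| \pmod 3$ is genuinely sufficient for this rebalancing, and that no subtler mod-$3$ obstruction emerges midway, is where I expect the main new content of the proof to lie.
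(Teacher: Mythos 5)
Your high-level plan --- stability method splitting on $\gamma$-extremality, absorption in the non-extremal case, and a rebalance-then-tile argument in the extremal case --- matches the paper's structure, and your extremal-case sketch (clean the partition, shift residues with a bounded family of triangles, then tile the nearly complete cyclic tripartite remainder) is close to what the paper does, with the endgame handled by Johansson's triangle-factor theorem for balanced blow-ups (Theorem~\ref{balanceR} via Corollary~\ref{balance}). One small correction there: since the extremal partition's classes are only approximately the strong-neighborhood classes, the paper first reassigns atypical vertices and then shows that any residual vertex $u \in U_0$ can be placed in a transversal or two-in-one-part triangle using the semidegree condition; the no-divisibility-barrier hypothesis is used only at this point, and only in the extremal case.

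The real gap is in your non-extremal case. You assert that non-extremality plus a supersaturation count of $C_3$'s ``should'' yield $\Omega(n^t)$ absorbers for every triple of vertices, but this is precisely the step the paper does not do directly, and it is not clear how one would. The paper instead imports the lattice/closure machinery of Keevash--Mycroft, Lo--Markstr\"om, and Han: it shows $V(G)$ admits a bounded $(H(G),\beta,\ell)$-closed partition (Lemma~\ref{lem:partition-existence}), repeatedly merges parts across robust $2$-transferrals $\uvec_i - \uvec_j \in \ulattice(H(G))$ (Lemma~\ref{lem:merge}), and then proves the crucial converse you are implicitly taking for granted: if after merging the partition is still non-trivial (so $\ulattice(H(G))$ is $2$-transferral-free on a non-trivial $\eta$-partition), then $G$ is already $\gamma$-extremal (Lemmas~\ref{lem:not-closed-1} and \ref{lem:not-closed-2}). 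The Lo--Markstr\"om absorbing lemma only kicks in once $V(G)$ is a single closed part. A raw supersaturation count cannot by itself detect the intermediate obstruction --- a vertex set split into parts each internally closed but with no transferral between them --- which is the scenario the transferral analysis is built to exclude. That analysis (Section~\ref{sec:not-closed-1} in particular, which pins down a large set $A$ with $\cyc(A,A,\overline{A})$ small and then derives a contradiction from a vertex of too-large out- or in-degree) is the main new content of the paper and is missing from your outline.
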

The following corollary resolves Cuckler and Yuster's conjecture for sufficiently large regular tournaments.
\begin{cor}
  \label{cor-main}
  There exists $n_0$ such that when $n$ is a multiple of $3$ and $n \ge n_0$ the following holds.
  If $G$ is an oriented graph on $n$ vertices and $\delta^0(G) \ge n/2 - 1$, then $G$ has a cyclic triangle factor.
\end{cor}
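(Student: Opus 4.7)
I would derive Corollary~\ref{cor-main} as a direct application of Theorem~\ref{thm:main}. Since $n/2 - 1 \ge (1/2 - c)n$ once $n$ is large, the hypothesis of the corollary implies the hypothesis of Theorem~\ref{thm:main}, so $G$ has a cyclic triangle factor if and only if $G$ has no divisibility barrier. Because $3 \mid n$, the trivial barrier $\{V(G)\}$ is ruled out, and it remains to show that $G$ has no three-part divisibility barrier.

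The argument for this is a degree-sum estimate. Suppose toward contradiction that $\{V_1, V_2, V_3\}$ is a three-part barrier with $|V_i| = n_i$ and (indices taken cyclically modulo $3$) no edges oriented $V_{i+1} \to V_i$. Then every out-edge leaving $V_i$ lies inside $V_i$ or in the bipartite pair $(V_i, V_{i+1})$, so the total number of such edges is at most $\binom{n_i}{2} + n_i n_{i+1}$. Summing the hypothesis $d^+(v) \ge n/2 - 1$ over $v \in V_i$ yields $\binom{n_i}{2} + n_i n_{i+1} \ge n_i(n/2 - 1)$, which simplifies to $n_i + 2 n_{i+1} \ge n - 1$; the analogous in-degree count at $V_{i+1}$ gives $2 n_i + n_{i+1} \ge n - 1$. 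Adding these two inequalities produces $n_i + n_{i+1} \ge 2(n-1)/3$ for every cyclic pair $(i, i+1)$.

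Combined with the identity $n_1 + n_2 + n_3 = n$, this forces $n_i \le n - 2(n-1)/3 = (n+2)/3$ for each $i$. Since $3 \mid n$, the quantity $(n+2)/3$ is not an integer, so the bound tightens to $n_i \le n/3$; the sum constraint then forces $n_1 = n_2 = n_3 = n/3$. But then all three part sizes are congruent modulo $3$, contradicting the defining property of a divisibility barrier.

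I do not anticipate any genuine obstacle. The only subtle point worth flagging is that applying the semidegree hypothesis vertex-by-vertex would give only the loose bound $n_i \le n/2$; obtaining the sharp bound $n_i \le n/3$ requires averaging the inequality over a whole part and then invoking the integrality forced by $3 \mid n$.
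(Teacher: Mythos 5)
Your proof is correct and, like the paper, first reduces matters via Theorem~\ref{thm:main} to showing that $G$ has no divisibility barrier, ruling out the trivial one by $3 \mid n$. The remaining degree argument, however, is organized differently. The paper notes that the three part sizes, being pairwise distinct modulo $3$, must contain a pair differing by at least $2$, and then picks a single vertex in the remaining part whose out- (or in-) degree inside that part is below average; that one vertex already violates $\delta^0(G) \ge n/2 - 1$. You instead sum the degree hypothesis over an entire part to obtain the pair of inequalities $n_i + 2n_{i+1} \ge n-1$ and $2n_i + n_{i+1} \ge n-1$, add them, and exploit the integrality of $n/3$ to force $n_1 = n_2 = n_3 = n/3$, contradicting the barrier's congruence condition. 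Both are averaging arguments across the barrier, but yours sidesteps the case split on which cyclic neighbor of the chosen part is smaller and does not need to locate the gap-$2$ pair explicitly, at the price of being slightly longer. One small point worth noting: dividing your counting inequality by $n_i$ tacitly assumes $n_i \ge 1$; this is harmless since a partition has nonempty parts, and the paper's proof makes the same tacit assumption when it selects a vertex from the middle part.
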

\begin{proof}
  Assuming that Theorem~\ref{thm:main} holds,
  we only need to show that $G$ does not contain a divisibility barrier.
  For a contradiction, assume that $\{V_1, V_2, V_3\}$ is a divisibility barrier.
  Since $n$ is divisible by $3$ and $|V_1|$, $|V_2|$, and $|V_3|$ are not all equivalent modulo $3$,
  we have that $|V_1|$, $|V_2|$ and $|V_3|$ are distinct modulo $3$.
  Therefore, there exists a labeling $\{i, j, k\} = [3]$ such that $|V_j| \le |V_k| - 2$.
  Suppose that $i + 1 \equiv j \pmod 3$.
  There exists $v \in V_i$ such that $|N^+(v) \cap V_i| < |V_i|/2$, so 
  \begin{equation*}
    d^+(v) = |N^+(v) \cap V_i| + |N^+(v) \cap V_j| + |N^+(v) \cap V_k| < \frac{|V_i|}{2} + |V_j| \le \frac{|V_i| + |V_j| + |V_k| - 2}{2} = \frac{n}{2} - 1,
  \end{equation*}
  a contradiction. A similar argument holds when $i - 1 \equiv j \pmod 3$.
\end{proof}

We are not sure how large the constant $c$ can be in Theorem~\ref{thm:main},
and we do not compute the value of $c$ that our proof implies.
An example of Keevash \& Sudakov \cite{keevash2009triangle}, which we will present below,
implies that the constant $c$ cannot be larger than $1/18$.
This suggests the following problem.
\begin{problem}\label{problem1}
  What is the smallest $\phi \ge 0$ such that there exists $n_0$ such that for every $n \ge n_0$ 
  every oriented graph $G$ on $n$ vertices with $\delta^0(G) \ge (4/9 + \phi)n$
  contains either a divisibility barrier or a cyclic triangle factor? 
\end{problem}
For $m \ge 1$, let $G$ be an oriented graph on $n = 9(m + 1)$ vertices and
let $\mathcal{P} = \{V_1, V_2, V_3\}$ be a partition of $V(G)$ 
such that $|V_1| = 3m + 1$ and $|V_2| = |V_3| = 3m + 4$. 
Suppose that for every pair $i,j \in [3]$ such that $j \equiv i + 1 \pmod 3$,
there is a directed edge from every vertex in $V_i$ to
every vertex in $V_j$.
Further suppose that, for every $i \in [3]$, 
the vertices of $V_i$ can be cyclically
ordered so that, for every $v \in V_i$, the intersection of the out-neighborhood of $v$ and $V_i$
is exactly the $(|V_i|-1)/3$ vertices that succeed $v$ in this ordering.
Note that every cyclic triangle has at least one vertex in $V_1$, 
so $|\mathcal{C}| \le |V_1| = n/3 - 2$ for every cyclic triangle tiling $\mathcal{C}$,
and 
\begin{equation*}
  \delta^0(G) = \frac{|V_2| - 1}{3} + |V_1| = \frac{4n}{9} - 2.
\end{equation*}

Note that $\mathcal{P}$ is not a divisibility barrier, because the
three parts of $\mathcal{P}$ all have the same size modulo $3$.
Additionally, $G$ cannot have a divisibility barrier, because for every 
partition $\mathcal{P}' \neq \mathcal{P}$, such that $|\mathcal{P'}| \le 3$, 
there exists a part $U \in \mathcal{P}'$ such that
there exist $x, y \in U$ such that $x$ and $y$ are in different parts, say $V_i$ and $V_j$, of $\mathcal{P}$.
But then either $\mathcal{P}'$ is not a divisibility barrier or 
$U$ contains all of the vertices of the third part, say $V_k$, of $\mathcal{P}$.
Then, by similar logic, 
one can argue that either $\mathcal{P}'$ is not a divisibility barrier or $U = V(G)$.
Since $3$ divides $|V(G)|$, $\mathcal{P}'$ is not a divisibility barrier.

Note that if the famous Caccetta-H\"aggkvist Conjecture \cite{caccetta1978minimal} is false and, for some $\psi > 1/3$ and all $n$, there exists a $C_3$-free oriented graph on $n$ vertices with minimum semidegree $\psi n$,
then a similar example would imply that $\phi$ must be strictly 
greater than $0$ in Problem~\ref{problem1}.

\subsection{Additional Definitions and Notation}
Let $G$ be an oriented graph, $u,v \in V(G)$, and $A,B \subseteq V(G)$.
We let $N^+(u)$ be the out-neighborhood 
of $u$,
$N^+(u, B) = N^+(u) \cap B$, $d^+(u, B) = |N^+(u, B)|$,
and $e^+(A, B) = \sum_{u \in A} d^+(u, B)$.
We define $N^-(u)$, $N^-(u, B)$, $d^-(u, B)$ and $e^-(A, B)$ similarly.
If $uv \in E(G)$, we let $d^{+,-}(uv, A) = |N^+(u) \cap N^-(v) \cap A|$
and $d^{-,+}(uv, A)$, $d^{+,+}(uv, A)$, and $d^{-,-}(uv, A)$ are all defined similarly.
We also let $d^{\sigma, \tau}(uv) = d^{\sigma, \tau}(uv, V(G))$, for $\sigma, \tau \in \{-, +\}$.
We let $E(A)$ be the set of edges in the oriented graph induced by $A$,
and let $e(A) = |E(A)|$.
We let $\overline{A} = V(G) \setminus A$.

We define $E^+(A, B) = \{uv \in E(G) : u \in A \text{ and } v \in B\}$
and $E^-(A, B) = E^+(B, A)$.
We will often write cyclic and transitive triangles $C$ as $abc$ when
$V(C) = \{a, b, c\}$.
For $V_1, V_2, V_3 \subseteq V(G)$, 
$\cyc(V_1, V_2, V_3)$ and $\trn(V_1, V_2, V_3)$ count, respectively, 
the number of cyclic and transitive triangles with vertex set
$\{v_a, v_b, v_c\}$ such that
$\{a, b, c\} = [3]$ and $v_i \in V_i$ for $i \in [3]$.
We abbreviate $\cyc(A, A, A)$ and $\trn(A, A, A)$ as 
$\cyc(A)$ and $\trn(A)$, respectively.
We will often replace $\{v\}$ with $v$ in this notation.

We define the \textit{strong $\beta$-out-neighborhood} of $A$ to be the set of 
vertices $x \in \overline{A}$ such that $d^{-}(x, A) \ge |A| - \beta n$
and we denote this set by $\strong{A}{+}{\beta}$.
We define the \textit{strong $\beta$-in-neighborhood} similarly.

Throughout the paper, we write $0<\alpha \ll \beta \ll \gamma$ 
to mean that we can choose the constants
$\alpha, \beta, \gamma$ from right to left. More
precisely, there are increasing functions $f$ and $g$ such that, given
$\gamma$, whenever we choose $\beta \leq f(\gamma)$ and $\alpha \leq g(\beta)$, all
calculations needed in our proof are valid. 
Hierarchies of other lengths are defined in the obvious way.
For real numbers $x$ and $y$,
we write $x = y \pm c$ to mean that $y - c \le x \le y+c$.

In our proof, we use a very small part of the theory and notation developed in 
\cite{keevash2015geometric}, \cite{keevash2015polynomial},\cite{han2017decision},\cite{lo2013minimum},
and \cite{lo2015f}. It is based on the absorbing method of 
R\"odl, Ruci\'nksi and Szemer\'edi \cite{rodl2006dirac}.
This theory was developed for hypergraphs, so 
we define, for every oriented graph $G$, the hypergraph $H(G)$ 
to be the $3$-uniform hypergraph in which $xyz$ is in an edge 
if and only if $xyz$ is a cyclic triangle in $G$.
Clearly a cyclic triangle factor in $G$ is equivalent to a
perfect matching in $H(G)$.

Let $V$ be a set of order $n$ and
let $\mathcal{P} = \{V_1, \dotsc, V_d\}$ be a partition of $V$.
We say that $\mathcal{P}$ is \textit{trivial} if $|\mathcal{P}| = 1$,
and, for $\eta > 0$,  
we call $\mathcal{P}$ an \textit{$\eta$-partition} if $|V_i| \ge \eta n$ for every $i \in [d]$.
Let $H$ be a $k$-uniform hypergraph with vertex set $V$.
We let
\begin{equation*}
  \delta_1(H) =  \min_{v \in V} \{ | \{e \in E(H) : v \in e \} | \}.
\end{equation*}
For every subset $U$ of $V$ the \textit{index vector with respect to $\mathcal{P}$}, denoted
$\mathbf{i}_{\mathcal{P}}(U)$, is the vector defined by 
\begin{equation*}
  \ivec(U) = \left(|U \cap V_1|, |U \cap V_2|, \dotsc, |U \cap V_d|\right).
\end{equation*}
Let $\Ivec(H) = \{\ivec(e) : e \in E(H)\}$ be the set of \textit{edge-vectors} and,
for $\mu > 0$,  let 
\begin{equation*}
  \uIvec(H) = \{ \mathbf{v} \in \Ivec(H) : \text{ there are at least $\mu n^k$ edges $e$ in $H$ such that $\mathbf{v} = \ivec(e)$} \},
\end{equation*}
be the set of \textit{$\mu$-robust edge-vectors}.

We call an additive subgroup of $\mathbb{Z}^d$ a \textit{lattice},
and we let $\lattice(H)$ and $\ulattice(H)$ be the lattices generated by $\Ivec(H)$ and $\uIvec(H)$, respectively.
Clearly if $M$ is a collection of vertex-disjoint edges in $H$, then $\ivec(V(M)) \in \lattice(H)$.
Therefore, $H$ does not have a perfect matching if $\ivec(V(H)) \notin \lattice(H)$.
For example, suppose that $G$ is an oriented graph with a divisibility barrier.
If $|V(G)|$ is not divisibility by $3$, then $\ivec(V(G)) \notin \lattice(H(G))$ when $\mathcal{P} = \{ V(G) \}$.
Otherwise, there exists a partition $\mathcal{P} = \{V_1, V_2, V_3\}$ of $V(G)$ such that
the entries of $\ivec(V(G))$ are each different modulo $3$, and, for every $e \in E(H(G))$,
the entries of $\ivec(e)$ are each the same modulo $3$, so $\ivec(V(G)) \notin \lattice(H(G))$.
We let $\uvec_i \in \mathbb{Z}^d$ be the \textit{$i$th unit vector}, i.e., $\uvec_i$ is the vector
in which the $i$th component is $1$ and every other component is $0$.
A \textit{transferral} is a vector $\mathbf{v}$ in $\mathbb{Z}^d$ such that
$\mathbf{v} = \uvec_i - \uvec_j$ for distinct $i$ and $j$ in $[d]$.
A \textit{$2$-transferral $\mathbf{v} \in \ulattice(H)$} is a  
transferral such that  $\mathbf{v} = \mathbf{v}_1 - \mathbf{v}_2$ for $\mathbf{v}_1, \mathbf{v}_2 \in \uIvec(H)$.
We say that $\ulattice(H)$ is \textit{$2$-transferral-free} if $\ulattice(H)$ does not contain a $2$-transferral.

Let $x$ and $y$ be vertices in $V$ and let $\beta > 0$.
A set $S \subseteq V$ is called an \textit{$(H,x,y)$-linking $(k\ell-1)$-set} if both $H[S\cup \{x\}]$ and $H[S\cup \{y\}]$ have perfect matchings
and $|S| = k \ell - 1$.
The vertices $x$ and $y$ are \textit{$(H, \beta, \ell)$-reachable} if there are at least $\beta n^{{\ell}k-1}$ $(H, x,y)$-linking
$(k \ell - 1)$-sets.
A set $U \subseteq V$ is \textit{$(H, \beta, {\ell})$-closed} if every pair of 
distinct vertices $x$ and $y$ in $U$ is $(H, \beta, {\ell})$-reachable. 
Call a partition $\mathcal{P} = \{V_1, \dotsc, V_d\}$ of $V(H)$ a \textit{$(H, \beta, {\ell})$-closed partition} if,
for every $i \in [d]$, $V_i$ is $(H, \beta, {\ell})$-closed.
For every vertex $x \in V(H)$, let $\tilde{N}_H(\beta,{\ell},x)$ be the set of vertices $y$ such that $x$ and $y$ 
are $(H, \beta, {\ell})$-reachable.

\section{Proof of Theorem~\ref{thm:main}}

\subsection{Overview}

Our proof follows the stability method, i.e., we divide the proof into two cases depending on whether  $G$ is $\gamma$-extremal.
The case when $G$ is $\gamma$-extremal is handled in the following lemma, the proof of which
we defer until Section~\ref{sec:extremal}.
\begin{lemma}[Extremal case]\label{lem:extremal}
  Suppose that $0 < 1/n \ll c, \gamma \ll 1$, $n$ is divisible by $3$, and $G$ is an oriented graph on $n$ vertices.
  If $\delta^0(G) \ge (1/2 - c)n$ and $G$ is $\gamma$-extremal, then $G$ has a triangle factor if and only if $G$ does not have a divisibility barrier.
\end{lemma}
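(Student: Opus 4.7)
The forward implication is immediate from the opening construction analysis: if $G$ admitted both a cyclic triangle factor $\mathcal{C}$ and a divisibility barrier $\{V_1, V_2, V_3\}$, then each $C \in \mathcal{C}$ would be either monochromatic or rainbow, forcing $|V(\mathcal{C}) \cap V_1| \equiv |V(\mathcal{C}) \cap V_2| \equiv |V(\mathcal{C}) \cap V_3| \pmod 3$, contradicting the distinct residues of the $|V_i|$. The trivial divisibility barrier is excluded because $3 \mid n$.

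For the converse, I would fix a hierarchy $0 < 1/n \ll c \ll \beta \ll \gamma \ll 1$ and start from the $\gamma$-extremal partition $\{V_1, V_2, V_3\}$. The first step is to \emph{clean} the partition: for each $i \in [3]$, let $W_i$ be the set of vertices $v$ with both $d^+(v, V_{i+1}) \ge (1/3 - \beta) n$ and $d^-(v, V_{i-1}) \ge (1/3 - \beta) n$, with indices taken modulo $3$. The bounds $e^+(V_{i+1}, V_i) \le \gamma n^2$ combined with $\delta^0(G) \ge (1/2 - c)n$ force the $W_i$'s to be pairwise disjoint and to contain all but an exceptional set $B := V(G) \setminus (W_1 \cup W_2 \cup W_3)$ of size $O(\beta n)$. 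Essentially all edges between distinct $W_i$'s respect the cyclic orientation $W_1 \to W_2 \to W_3 \to W_1$.

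Next, I would \emph{absorb $B$} and then \emph{balance the residual class sizes}. For each $v \in B$, the semidegree hypothesis and the structure of the $W_i$'s guarantee, for every ordered pair $(i,j) \in [3]^2$, the existence of $\Omega(n^2)$ cyclic triangles $v w_i w_j$ with $w_i \in W_i$ and $w_j \in W_j$; hence a greedy argument produces a vertex-disjoint collection $\mathcal{T}_B$ of cyclic triangles covering $B$. Writing $w_i := |W_i \setminus V(\mathcal{T}_B)|$, the task is to find a bounded-size collection $\mathcal{T}_{\mathrm{bal}}$ of further vertex-disjoint cyclic triangles in $W_1 \cup W_2 \cup W_3$ which, when removed, leaves three parts of equal size. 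Here the ``no divisibility barrier'' assumption is essential: if $\{W_1, W_2, W_3\}$ possesses a wrong-direction edge, that edge spawns a $(2,1,0)$-type cyclic triangle that shifts the part sizes by a non-diagonal vector; and if no such edge exists, the no-barrier hypothesis forces $w_1 \equiv w_2 \equiv w_3 \pmod 3$, so the imbalances are multiples of $3$ and can be corrected using the monochromatic cyclic triangles abundantly present in each $W_i$ (which satisfies $\delta^0(G[W_i]) \ge (1/2 - O(\beta))|W_i|$, so Theorem~\ref{thm:KS} applies internally).

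Once the parts have been balanced, the residual oriented graph $G'$ has three equi-sized classes $W_i'$ with every vertex possessing $(1/3 - O(\beta))|W'_{i \pm 1}|$ correctly-oriented neighbors in each neighboring class, and only $O(\gamma n^2)$ wrong-direction edges in total. A straightforward greedy rainbow-triangle peeling (or equivalently, a tripartite codegree-threshold argument for perfect matchings in the auxiliary $3$-graph of rainbow cyclic triangles) then extracts a perfect rainbow cyclic triangle factor of $G'$; together with $\mathcal{T}_B$ and $\mathcal{T}_{\mathrm{bal}}$ this is the desired factor of $G$. The technical heart of the argument is the balancing step: one must carry the ``no divisibility barrier'' property from the original $\{V_1, V_2, V_3\}$ to the cleaned $\{W_1, W_2, W_3\}$ and exhibit a \emph{specific} constant-size tiling which kills the residue defect, all while keeping its triangles vertex-disjoint from $\mathcal{T}_B$. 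Every other step is a routine extremal-case calculation, but this balancing is where the lattice language from the introduction and a short case analysis on edge directions must be deployed.
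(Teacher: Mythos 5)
Your high-level skeleton—clean the $\gamma$-extremal partition to sets $W_i$ of well-behaved vertices plus a small exceptional set $B$, absorb $B$, repair residues, and finish with a balanced tripartite triangle factor—matches the paper, and your $W_i$ are essentially the paper's $U_i = \SN^{-}_\beta(V_{i+1}) \cap \SN^{+}_\beta(V_{i-1})$, with $U_0$ playing the role of $B$. The finishing step via Johansson's theorem (Corollary~\ref{balance}) is also the same, and the forward implication is fine.

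There are, however, two concrete problems in the middle. First, the claim that every $v \in B$ lies in $\Omega(n^2)$ cyclic triangles $vw_iw_j$ with $w_i \in W_i$, $w_j \in W_j$ for \emph{every} ordered pair $(i,j) \in [3]^2$ is false: the semidegree hypothesis permits a vertex $v \in V_1 \cap B$ with $d^+(v,V_2) = 0$, and such a $v$ lies in no cyclic triangle with two vertices in $W_2 \subseteq V_2$ (a cyclic triangle requires $v$ to have at least one out-neighbour among its other two vertices). Lemma~\ref{min_vertex_degree} only gives $\Omega(n^2)$ cyclic triangles through each vertex with no control over their index vectors, which is exactly why the residue-repair is delicate.

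Second—and this is the real gap—the balancing step you correctly flag as the technical heart is not supplied, and the route you sketch does not close. Absorbing all of $B$ first and then trying to balance runs into two obstacles: (i) the no-divisibility-barrier hypothesis is about partitions of $V(G)$ and does not transfer to the smaller oriented graph $G - V(\mathcal{T}_B)$; (ii) the wrong-direction edges between the $W_i$'s that you want to exploit may number as few as one, and removing the $O(\beta n)$ vertices of $\mathcal{T}_B$ can wipe them all out. The paper avoids both issues by splitting on whether $U_0$ is empty. If $U_0 \neq \emptyset$, it covers all but a single vertex $u \in U_0$ with arbitrary cyclic triangles and then shows, via a degree-counting case analysis on $N^\pm(u, W_1)$, $N^\pm(u, W_2)$, $N^\pm(u, W_3)$, that $u$ must lie in a cyclic triangle with two vertices in $W_1$, or with one vertex in each of $W_2$ and $W_3$—precisely the index vectors that equalize the three residues—for otherwise $u$ would be forced into $U_2$ or $U_3$, contradicting $u \in U_0$. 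Notably, the no-barrier hypothesis is never used in this case; the surviving exceptional vertex itself provides the balancing freedom. Only when $U_0 = \emptyset$ does the paper invoke the no-barrier hypothesis, applied directly to the genuine partition $\{U_1, U_2, U_3\}$ of $V(G)$, to extract a wrong-direction edge $ab$; it then constructs \emph{two} candidate correcting triangles $T_1 = abc$ and $T_2 = abd$ with index vectors $(1,2,0)$ and $(2,1,0)$ and uses whichever one fixes the residue pattern. By absorbing all of $B$ first and treating the two cases uniformly, your proposal loses exactly the structural control that makes the residue repair go through, and that control is where the work of this lemma resides.
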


The proof in the case when $G$ is not $\gamma$-extremal follows the absorbing method, and 
the following lemma of Lo \& Markstr{\"o}m \cite{lo2015f} serves as our absorbing lemma.
\begin{lemma}[Absorbing Lemma - Lemma~1.1 in \cite{lo2015f}]
  \label{lem:absorbing}
  Given $k$ and 
  \begin{equation*}
    0 < 1/n \ll \alpha \ll \eta \ll \beta \ll 1/{\ell} \le 1
  \end{equation*}
  the following holds.
  If $H$ is a $k$-uniform hypergraph on $n$ vertices and $V(H)$ is $(H, \beta, {\ell})$-closed, then 
  there exists a subset $U$ of $V(H)$ where $|U|$ is at most $\eta n$ and divisible by $k$ 
  such that for every $W \subseteq V(H) \setminus U$ with $|W|$ at most
  $\alpha n$ and divisible by $k$ there exists a perfect matching of $H[U \cup W]$.
\end{lemma}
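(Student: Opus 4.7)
The plan is to follow the standard Lo--Markstr{\"o}m absorbing strategy. Call a set $A \subseteq V(H)$ an \emph{absorber} for a disjoint $k$-set $T$ if both $H[A]$ and $H[A \cup T]$ admit perfect matchings. I will construct $U$ as the union of a small random family of absorbers with the property that, for every $k$-set $T \subseteq V(H)$, many pairwise disjoint absorbers of $T$ lie inside $U$. Then any $W \subseteq V(H) \setminus U$ with $|W| \le \alpha n$ and $k \mid |W|$ can be partitioned into $k$-sets $W_1, \ldots, W_t$, each $W_i$ absorbed by a distinct unused absorber inside $U$, and every remaining absorber covered by its own internal perfect matching; the concatenation of these matchings is a perfect matching of $H[U \cup W]$.

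The heart of the proof is an absorber-counting lemma: every $k$-set $T = \{v_1, \ldots, v_k\}$ admits at least $\gamma n^{s}$ absorbers of a common size $s := k^{2}\ell$, for some constant $\gamma = \gamma(\beta, k, \ell) > 0$. I would construct such absorbers as follows. Pick any edge $e = \{w_1, \ldots, w_k\} \in E(H)$ disjoint from $T$, and then, greedily for each $i \in [k]$, pick an $(H, v_i, w_i)$-linking $(k\ell - 1)$-set $S_i$ disjoint from $T \cup e \cup S_1 \cup \cdots \cup S_{i-1}$. Set $A := \{w_1, \ldots, w_k\} \cup S_1 \cup \cdots \cup S_k$. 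The $k$ perfect matchings of $H[S_i \cup \{w_i\}]$ together form a perfect matching of $H[A]$, while the edge $e$ together with the $k$ perfect matchings of $H[S_i \cup \{v_i\}]$ form a perfect matching of $H[A \cup T]$. The $(H, \beta, \ell)$-closedness of $V(H)$ gives at least $\beta n^{k\ell - 1}$ choices per $S_i$, and enforcing disjointness removes only $O(n^{k\ell - 2})$ of them; closedness also forces each vertex $x$ to lie in $\Omega(n^{k-1})$ edges (each of the $\beta n^{k\ell - 1}$ linking sets for a pair involving $x$ contributes an edge through $x$, and each fixed edge through $x$ is responsible for at most $n^{k\ell - k}$ such sets), which yields $\Omega(n^k)$ choices for $e$.

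The remainder is the standard probabilistic selection. Include each $s$-subset of $V(H)$ independently in a random family $\mathcal{F}$ with probability $p = c \eta / n^{s-1}$ for an appropriate constant $c$. Then $E|\mathcal{F}| = \Theta(\eta n)$, the expected number of absorbers for each $T$ in $\mathcal{F}$ is $\Theta(p \gamma n^{s}) = \Theta(\eta \gamma n)$, and the expected number of pairs of intersecting sets in $\mathcal{F}$ is $O(\eta^{2}/n)$. Markov plus Chernoff give all three bounds simultaneously with positive probability. Deleting one set from each intersecting pair and every non-absorber leaves a family $\mathcal{F}'$ of pairwise disjoint absorbers with $|\bigcup \mathcal{F}'| \le \eta n$, automatically divisible by $k$ because $s = k^{2}\ell$, and such that for every $k$-set $T$ there are at least $\alpha n / k + 1$ absorbers of $T$ inside $\mathcal{F}'$. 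Setting $U := \bigcup \mathcal{F}'$ gives the desired absorbing set.

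The main obstacle is the absorber-counting step: one must verify carefully that enforcing pairwise disjointness among the choices $e, S_1, \ldots, S_k$ still leaves $\Omega(n^{s})$ absorbers (with the implicit constant depending only on $\beta, k, \ell$), and in particular must derive $|E(H)| = \Omega(n^k)$ from $(H, \beta, \ell)$-closedness as a prerequisite. Once these two counts are in hand, the probabilistic selection with alteration and the final greedy assignment of absorbers to the $k$-pieces of $W$ are routine.
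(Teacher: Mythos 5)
The paper does not prove this lemma; it is quoted verbatim as Lemma~1.1 of Lo \& Markstr\"om \cite{lo2015f} and used as a black box, so there is no in-paper proof to compare against. Your proposal is a faithful reconstruction of the Lo--Markstr\"om argument and the underlying R\"odl--Ruci\'nski--Szemer\'edi absorbing method: the absorber gadget (one edge plus $k$ linking sets of size $k\ell-1$, giving $s=k^2\ell$), the counting of such gadgets via closedness, the derivation of $|E(H)|=\Omega(n^k)$ from reachability, and the random selection with alteration are all the right ingredients, and the size arithmetic checks out ($k$ disjoint matchings of the $k\ell$-sets $S_i\cup\{w_i\}$ cover $A$; adding $e$ and switching to $S_i\cup\{v_i\}$ covers $A\cup T$).

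One computational slip: the expected number of ordered pairs of intersecting $s$-subsets in $\mathcal{F}$ is $O(p^2 n^{2s-1}) = O\bigl((c\eta/n^{s-1})^2 n^{2s-1}\bigr) = O(c^2\eta^2 n)$, not $O(\eta^2/n)$ as you wrote. This does not break the argument, since with $c$ a sufficiently small constant and $\eta\ll\beta$ (hence $\eta\ll\gamma$) the quantity $c^2\eta^2 n$ is still much smaller than the $\Theta(c\eta\gamma n)$ absorbers available per $k$-set $T$, so the alteration step still leaves $\Omega(\eta n)\ge\alpha n/k+1$ disjoint absorbers for each $T$; but the exponent should be corrected. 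You should also make explicit that after the alteration you additionally remove all selected $s$-sets that are \emph{not} absorbers of any $T$ only from the disjointness count (they can simply be matched internally once you verify every retained $s$-set in $\mathcal{F}'$ either is an absorber of some $T$ or is discarded); the cleanest fix is to keep only sets that are absorbers of at least one $k$-set, which your sketch implicitly does. With these small repairs the proof is complete and matches the cited source.
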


Lemma~\ref{lem:absorbing} and Theorem~\ref{thm:KS} imply the following lemma. 
\begin{lemma}\label{lem:non-extremal}
  Suppose that $0 < 1/n \ll c \ll \beta \ll 1/{\ell} < 1$, $n$ is divisible by $3$,
  and that $G$ is an oriented graph on $n$ vertices.
  If $\delta^0(G) \ge (1/2 - c)n$ and $V(G)$ is $(H(G), \beta, {\ell})$-closed, 
  then $G$ has a triangle factor.
\end{lemma}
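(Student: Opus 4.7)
The plan is to combine Lemma~\ref{lem:absorbing} with Theorem~\ref{thm:KS} in the standard absorbing-method framework. To do this I would first fix auxiliary constants $\alpha$ and $\eta$ with $1/n \ll \alpha \ll \eta \ll \beta$, chosen so that $\eta$ is much smaller than the constant from Theorem~\ref{thm:KS} (call it $c_{\mathrm{KS}}$) and so that $3 \le \alpha n$. Since $c \ll \beta$ by hypothesis, this is possible with $c + \eta < c_{\mathrm{KS}}$, and the hierarchy required by Lemma~\ref{lem:absorbing} (with $k = 3$) is satisfied.

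Step~1 (absorbing set). Since $V(G)$ is $(H(G), \beta, \ell)$-closed, Lemma~\ref{lem:absorbing} applied to $H(G)$ with $k = 3$ produces a set $U \subseteq V(G)$ with $|U| \le \eta n$ and $3 \mid |U|$, such that for every $W \subseteq V(G) \setminus U$ with $|W| \le \alpha n$ and $3 \mid |W|$, the induced hypergraph $H(G)[U \cup W]$ has a perfect matching; in $G$-language, $G[U \cup W]$ admits a cyclic triangle factor.

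Step~2 (approximate cover off the absorber, then absorb). Let $G' = G - U$ and $n' = |V(G')| = n - |U|$. Then $n' \ge (1-\eta)n$, $3 \mid n'$, and $\delta^0(G') \ge \delta^0(G) - |U| \ge (1/2 - c - \eta)n \ge (1/2 - c_{\mathrm{KS}})n'$ by the choices above. For $n$ large enough, $n' \ge n_0$ and Theorem~\ref{thm:KS} applies to $G'$, yielding a cyclic triangle tiling $\mathcal{C}'$ of $G'$ that leaves uncovered a set $W$ of at most $3$ vertices. Because $3 \mid n'$ and $3 \mid |V(\mathcal{C}')|$, we obtain $3 \mid |W|$, so $|W| \in \{0, 3\}$. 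In particular $|W| \le \alpha n$, and the absorbing property of $U$ from Step~1 produces a cyclic triangle factor $\mathcal{C}''$ of $G[U \cup W]$. The union $\mathcal{C}' \cup \mathcal{C}''$ is then the desired cyclic triangle factor of $G$.

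I do not expect any genuine obstacles in carrying this out: the argument is a routine application of the absorbing method, and the only subtleties are (i) keeping the hierarchy of constants straight so that Lemma~\ref{lem:absorbing} and Theorem~\ref{thm:KS} can both be invoked with appropriate parameters, and (ii) using divisibility of $n$ and $|U|$ by $3$ to guarantee that the leftover $W$ from the approximate tiling is itself divisible by $3$ and hence absorbable. The genuinely hard work---namely establishing that $V(G)$ is in fact $(H(G), \beta, \ell)$-closed whenever $G$ is not $\gamma$-extremal---must be done elsewhere in the paper; here we merely consume that hypothesis.
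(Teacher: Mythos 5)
Your proof is correct and follows essentially the same approach as the paper: apply Lemma~\ref{lem:absorbing} to obtain the absorbing set $U$, apply Theorem~\ref{thm:KS} to $G - U$ to get a near-perfect tiling leaving a leftover $W$ with $|W| \in \{0,3\}$, and then absorb $W$ into $U$. The only cosmetic difference is in the constant hierarchy (the paper writes $\alpha \ll \eta \ll c \ll \beta$ and invokes Theorem~\ref{thm:KS} with $2c$ in place of $c$, whereas you track $c_{\mathrm{KS}}$ explicitly), but the substance is identical.
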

\begin{proof}
Introduce constants $\eta$ and $\alpha$ so that 
\begin{equation*}
  1/n \ll \alpha \ll \eta \ll c \ll \beta.
\end{equation*}
Let $U \subseteq V(G)$ be the set guaranteed by Lemma~\ref{lem:absorbing}
and let $G' = G - U$. Note that 
\begin{equation*}
  \delta^0(G') \ge \delta^0(G) - |U| \ge (1/2 - c - \eta)n \ge (1/2 - 2 c)|G'|.
\end{equation*}
Therefore, by Theorem~\ref{thm:KS}, with $2c$ and $G'$ playing the roles of $c$ and $G$, respectively, 
there exists a cyclic triangle tiling $\mathcal{C}$ of $G'$ such that
if $W = V(G') \setminus V(\mathcal{C})$, then $|W| \in \{0, 3\}$.
Since $|W| \le \alpha n$ and $|W|$ is divisible by $3$, there
exists $\mathcal{C'}$ a triangle factor of $G[U \cup W]$.
Hence, $\mathcal{C} \cup \mathcal{C}'$ is a triangle factor of $G$.
\end{proof}

Note that, with Lemmas~\ref{lem:extremal} and \ref{lem:non-extremal}, the following lemma implies Theorem~\ref{thm:main}.
\begin{lemma}\label{lem:not-closed}
  Suppose that $0 < 1/n \ll c \ll \beta \ll \gamma < 1$ and that $G$ is an oriented graph on $n$ vertices.
  If $\delta^0(G) \ge (1/2 - c)n$, and, for every positive integer $\ell \le 1000$, 
  we have that $V(G)$ is not $(H(G), \beta, {\ell})$-closed, then $G$ is $\gamma$-extremal.
\end{lemma}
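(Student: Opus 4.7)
The plan is to extract, from the total failure of closure at every scale $\ell \le 1000$, a partition $\mathcal{P}=\{V_1,V_2,V_3\}$ of $V(G)$ realizing the $\gamma$-extremal structure. Throughout I would use auxiliary constants $1/n \ll c \ll \beta \ll \eta \ll \gamma$ and pass through the intermediate conclusion that $\ulattice(H(G))$ is $2$-transferral-free with respect to the reachability partition. Using $\delta^0(G) \ge (1/2-c)n$, a standard double-count shows every vertex lies in $\Omega(n^2)$ cyclic triangles, which in turn implies $|\tilde{N}_{H(G)}(\beta,1,x)| \ge \eta n$ for every $x$. Iteratively merging vertices by reachability (as in Lo-Markstr\"om and Keevash-Mycroft) then yields, after at most $\ell^\ast \le 1000$ rounds, a partition $\mathcal{P} = \{V_1, \ldots, V_d\}$, each part of size $\ge \eta n$ and individually $(H(G), \beta', \ell^\ast)$-closed. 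The hypothesis that $V(G)$ is not closed at any scale $\ell \le 1000$ forces $d \ge 2$ and $\ulattice(H(G))$ to be $2$-transferral-free: a $2$-transferral $\uvec_i - \uvec_j$ would let any cross-class pair be linked by $O(1)$ common absorbers, merging $V_i$ with $V_j$.

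A pigeonhole on triangle index vectors then bounds $d \le 3$: if $d \ge 4$, since each cyclic triangle uses only three parts and each vertex lies in $\Omega(n^2)$ triangles, the semidegree lower bound guarantees robust vectors $(1,1,1,0,\ldots)$ and $(1,1,0,1,0,\ldots)$ through some vertex, yielding the transferral $\uvec_3 - \uvec_4$. The case $d = 2$ is ruled out by noting that the only index vectors are $(3,0),(2,1),(1,2),(0,3)$, and freedom from the transferral $(1,-1)$ forbids any two \emph{adjacent} such vectors from being simultaneously robust, while Keevash-Sudakov-type counting supplies too many cross-triangles to be consistent. With $d = 3$, a short case analysis over the ten vectors in $\mathbb{Z}^3_{\ge 0}$ summing to $3$, using that $(1,1,1)$ differs from every $(2,1,0)$-type vector by a transferral, forces that almost all robust cyclic triangles have index vector $(1,1,1)$. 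Balancing the global triangle count against the semidegree lower bound gives $|V_i| = (1/3 \pm \gamma)n$. The cyclic orientation $V_1 \to V_2 \to V_3 \to V_1$ is singled out by the asymmetry between $C_3$ and $TT_3$: the three cross-part edges of an index-$(1,1,1)$ cyclic triangle must agree in cyclic rotation, so after relabeling one direction is fixed. Finally, each "backward" edge (against this cyclic rotation) lies in $\Omega(n)$ cyclic triangles by the semidegree condition, so more than $\gamma n^2$ such edges in any single direction would produce robust cross-vectors of the forbidden $(2,1,0)$-type, contradicting $2$-transferral-freeness.

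The hardest step is the final quantitative bound $|E^-(V_{i+1},V_i)| \le \gamma n^2$ on backward edges in each of the three cyclic directions. Converting the qualitative lattice condition (no $2$-transferral) into this sharp per-direction bound requires a reverse-supersaturation argument: each excess backward edge must be shown to generate enough cyclic triangles with a forbidden index vector to make that vector robust, which in turn requires careful use of the semidegree condition to find $\Omega(n)$ triangle-completions for each such edge. A second subtle point is distinguishing the cyclic from the transitive structure — the lattice argument alone only produces a partition with few \emph{all-three-parts} cross-triangles, and it is the orientation of $C_3$ (as opposed to $TT_3$) that singles out one of the two possible cyclic rotations of $\mathcal{P}$ as the correct one for the extremal configuration.
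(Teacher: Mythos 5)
Your outline captures the right skeleton---build a closed partition via reachability, merge along $2$-transferrals, and extract extremal structure from $2$-transferral-freeness---but it misses the central structural step and does not actually supply an argument for the part you flag as hardest. The key intermediate lemma (the paper's Lemma~\ref{lem:not-closed-1}) is that in any non-trivial $\eta$-partition $\mathcal{P}$ with $\ulattice(H(G))$ $2$-transferral-free, \emph{some} $A\in\mathcal{P}$ satisfies $\cyc(A,A,\overline{A})\le\alpha n^3$. Your index-vector bookkeeping cannot deliver this on its own: transferral-freeness only forbids certain \emph{pairs} of index vectors from being simultaneously $\mu$-robust; it does not determine which ones actually are robust. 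Your claim that ``$(1,1,1)$ differs from every $(2,1,0)$-type vector by a transferral, forces that almost all robust cyclic triangles have index vector $(1,1,1)$'' is exactly backwards: it shows only that $(1,1,1)$ and a $(2,1,0)$-type vector cannot \emph{both} be robust, not that $(1,1,1)$ is the one that survives. The paper turns the lattice constraints into a structural statement about strong neighborhoods (the sets $C^+,C^-,F^+,F^-$) in the oriented graph and manufactures a vertex whose out- or in-degree exceeds $(1/2+c)n$, contradicting $\delta^0(G)\ge(1/2-c)n$. This degree contradiction is the engine of the proof and has no analogue in your sketch.

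Once such an $A$ exists, the paper does not bound backward edges via the ``reverse-supersaturation'' you propose. Instead, Lemma~\ref{main-not-closed-2} shows $\SN^+_\beta(A)$ and $\SN^-_\beta(A)$ nearly partition $\overline{A}$ into equal halves and that $|A|\le(1/3+\xi)n$; then $2$-transferral-freeness is invoked \emph{again} (via the auxiliary count $L(A,V,V,\overline{A})$) to show $\cyc(S^\sigma,S^\sigma,\overline{S^\sigma})$ is small, so the same lemma can be re-applied to $S^+$ to pin down all three part sizes near $n/3$ and the small number of forward edges. Without an analogue of Lemma~\ref{lem:not-closed-1} to produce a part with few $\cyc(A,A,\overline{A})$-triangles, the proposal cannot get off the ground. (Also, your pigeonhole bound $d\le3$ is both unproved---$(1,1,1,0)$ and $(2,0,0,1)$ could both be robust with no transferral between them---and unnecessary, since the paper's argument runs directly with $d\le4$; and ``the lattice argument alone produces a partition with few all-three-parts cross-triangles'' is false: in the extremal configuration $(1,1,1)$-triangles are precisely the plentiful ones.)
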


The proof of Lemma~\ref{lem:not-closed} relies on the following four lemmas.

\begin{lemma}
  \label{lem:merge}
  Suppose that $0 < 1/n \ll \beta \ll \beta', \mu \ll 1/d, 1/{\ell} \le 1$,
  $G$ is an oriented graph on $n$ vertices, and 
  that $\mathcal{P} = \{V_1, \dotsc, V_d\}$ is a partition of $V(G)$.
  If $\mathcal{P}$ is $(H(G), \beta', {\ell})$-closed and there exists a
  $2$-transferral $\uvec_i - \uvec_j \in \ulattice(H(G))$, then 
  the partition formed by merging $V_i$ and $V_j$ is 
  $(H(G), \beta, 4\ell + 1)$-closed.
\end{lemma}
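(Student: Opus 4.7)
The plan is to verify $(H(G),\beta,4\ell+1)$-reachability for every pair of distinct vertices in the same part of the merged partition. For pairs $x,y$ lying in a common original class (including when both are in $V_i$ or both are in $V_j$), the hypothesis already provides $\Omega(n^{3\ell-1})$ linking $(3\ell-1)$-sets, and each such set extends to a linking $(12\ell+2)$-set by attaching any $3\ell+1$ vertex-disjoint cyclic triangles on new vertices. Since $\delta^0(G)\ge(1/2-c)n$ guarantees $\Theta(n^3)$ cyclic triangles and hence $\Omega(n^{9\ell+3})$ valid extensions, this yields $\Omega(n^{12\ell+2})$ linking sets, well above the required $\beta n^{12\ell+2}$ bound. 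So the essential case is $x\in V_i$ and $y\in V_j$.

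Let $\mathbf{v}_1,\mathbf{v}_2\in\uIvec(H(G))$ witness the $2$-transferral, so $\mathbf{v}_1-\mathbf{v}_2=\uvec_i-\uvec_j$ and in particular $\mathbf{v}_1-\uvec_i=\mathbf{v}_2-\uvec_j$ as vectors. Call $x_1\in V_i$ \emph{$\mathbf{v}_1$-typical} if it lies in at least $(\mu/2)n^2$ $\mathbf{v}_1$-edges of $H(G)$; a routine averaging over the $\ge \mu n^3$ such edges shows that all but $O(\mu^{1/2}n)$ vertices of $V_i$ are $\mathbf{v}_1$-typical, and similarly for $\mathbf{v}_2$-typicality in $V_j$. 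The central subroutine produces $\Omega(n^5)$ $(H(G),x_1,y_1)$-linking $5$-sets for every $\mathbf{v}_1$-typical $x_1\in V_i$ and $\mathbf{v}_2$-typical $y_1\in V_j$: pick a $\mathbf{v}_1$-edge $\{x_1,u_1,u_2\}$, a disjoint $\mathbf{v}_2$-edge $\{y_1,w_1,w_2\}$, and a vertex $z$ completing both $\{u_1,u_2,z\}$ and $\{w_1,w_2,z\}$ as cyclic triangles of $G$. Then $R=\{u_1,u_2,w_1,w_2,z\}$ is linking via the matchings $\{x_1u_1u_2,\,w_1w_2z\}$ of $R\cup\{x_1\}$ and $\{y_1w_1w_2,\,u_1u_2z\}$ of $R\cup\{y_1\}$; typicality yields $\Omega(\mu^2 n^4)$ choices of $(e_1,e_2)$ and $\delta^0(G)\ge(1/2-c)n$ ensures $\Omega(n)$ valid choices of $z$ for all but a negligible fraction of such pairs.

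For general $x\in V_i$, $y\in V_j$, compose as follows: choose a $\mathbf{v}_1$-typical $x_1$, an $(H(G),x,x_1)$-linking $(3\ell-1)$-set $R_1$, a $\mathbf{v}_2$-typical $y_1$, an $(H(G),y,y_1)$-linking $(3\ell-1)$-set $R_3$, a linking $5$-set $R_2$ for $(x_1,y_1)$ from the subroutine, and a collection $P$ of $2\ell-1$ vertex-disjoint cyclic triangles, all pairwise disjoint. Then $|S|=12\ell+2$ where $S=R_1\cup\{x_1\}\cup R_2\cup\{y_1\}\cup R_3\cup P$, and perfect matchings of $H(G)[S\cup\{x\}]$ and $H(G)[S\cup\{y\}]$ assemble as expected: for $S\cup\{x\}$, use the perfect matchings of $R_1\cup\{x\}$, $R_2\cup\{x_1\}$, $R_3\cup\{y_1\}$ together with the triangles of $P$; and symmetrically for $S\cup\{y\}$. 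The main obstacle is propagating the counts through the composition: at each stage the number of valid choices shrinks by a factor $1-O(n^{-1})$ due to prohibited overlaps with earlier choices, but the hierarchy $\beta\ll\beta',\mu\ll 1/\ell$ ensures the cumulative loss is absorbed, leaving at least $\beta n^{12\ell+2}$ linking sets as required.
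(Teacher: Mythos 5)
Your architecture matches the paper's: same-part pairs are handled by padding a $(3\ell-1)$-linking set with disjoint cyclic triangles, and a cross-part pair $x\in V_i$, $y\in V_j$ is routed through a typical $x_1\in V_i$ and a typical $y_1\in V_j$, with the composition and the $12\ell+2$ bookkeeping done correctly. The gap is in the central subroutine, which is supposed to supply $\Omega(n^5)$ linking $5$-sets for $(x_1,y_1)$. You assert that for a $\mathbf{v}_1$-edge $\{x_1,u_1,u_2\}$ and a disjoint $\mathbf{v}_2$-edge $\{y_1,w_1,w_2\}$, for all but a negligible fraction of such pairs there are $\Omega(n)$ vertices $z$ completing both $\{u_1,u_2,z\}$ and $\{w_1,w_2,z\}$ to cyclic triangles, and you attribute this to $\delta^0(G)\ge(1/2-c)n$. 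This is not justified (and, incidentally, the lemma statement does not even assume a semidegree bound). The semidegree hypothesis controls $\delta_1(H(G))$ but gives no lower bound on the codegree of an edge in $H(G)$: for an edge $u_1\to u_2$ the number of cyclic completions is $d^{-,+}(u_1u_2)=|N^-(u_1)\cap N^+(u_2)|$, and this can be $O(1)$ even in a regular tournament (e.g.\ in the rotational tournament on $n$ vertices, the edge $v_1\to v_2$ has exactly one cyclic completion). So one cannot even guarantee one $z$ for a single edge, let alone $\Omega(n)$ common completions for two disjoint edges, and nothing you average over forces your chosen edges to be better than this worst case.

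The resource that rescues this step, and that you never invoke in the subroutine, is the closure of $\mathcal{P}$. Since $\mathbf{v}_1-\mathbf{v}_2=\uvec_i-\uvec_j$, the two robust edge-vectors agree outside coordinates $i$ and $j$, so one can write $\mathbf{v}_1=\uvec_i+\uvec_a+\uvec_b$ and $\mathbf{v}_2=\uvec_j+\uvec_a+\uvec_b$ and choose the robust edges so that $u_1,w_1\in V_a$ and $u_2,w_2\in V_b$. The paper then connects $u_1$ to $w_1$ and $u_2$ to $w_2$ with $(3\ell-1)$-linking sets supplied by the closure of $V_a$ and $V_b$, in place of your shared completing vertex $z$; together with the two linking sets you already have inside $V_i$ and $V_j$, this produces a $(12\ell+2)$-set directly with two perfect matchings (one keeping the triangle $v_0v_1v_2$ intact, the other keeping $u_1u_2u_3$ intact). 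If you replace your $z$-subroutine with this coordinate-wise linking, and replace the appeal to $\delta^0(G)$ in the padding step with the paper's observation that closure of the largest part already forces $\Omega(n^3)$ cyclic triangles, you recover a correct proof.
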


\begin{lemma}
  \label{lem:partition-existence}
  Suppose that $0 < 1/n \ll c \ll \beta \ll 1$ and that $G$ is an oriented graph on $n$ vertices
  such that $\delta^0(G) \ge (1/2 - c)n$.
  Then there exists $\mathcal{P} = \{V_1, \dotsc, V_d\}$ a $(H(G), \beta, 8)$-closed partition of $V(G)$
  such that $d \le 4$ and $|V_i| > n / 9$ for every $i \in [d]$.
\end{lemma}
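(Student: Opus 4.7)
The plan is to construct an initial $(H(G), \beta_0, \ell_0)$-closed partition of $V(G)$ with at most five parts each of size exceeding $n/9$ via a direct reachability-density argument, and then, if a fifth part remains, to apply Lemma~\ref{lem:merge} once on a $2$-transferral to collapse it into another part.

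The first step is to establish the density of $1$-reachability: for some $\beta_0 \gg \beta$ and an absolute constant $\alpha > 1/5$, every vertex $x \in V(G)$ satisfies $|\tilde{N}_{H(G)}(\beta_0, 1, x)| \ge \alpha n$. A double-counting using the minimum semidegree shows each vertex $x$ lies in at least $(1/8 - O(c))n^2$ cyclic triangles---cyclic triangles through $x$ correspond to directed edges from $N^+(x)$ to $N^-(x)$, and one bounds $\sum_{u \in N^+(x)} d^+(u, N^-(x))$ from below using $\sum_{u \in N^+(x)} d^+(u) \ge |N^+(x)|(1/2-c)n$ after subtracting $e(N^+(x)) \le \binom{|N^+(x)|}{2}$ and an $O(c)n^2$ term for non-neighbors of $x$. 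A Cauchy--Schwarz estimate on $\sum_{uv \in E(G)}|S_{uv}|^2$---where $S_{uv} = \{w : wuv \text{ cyclic}\}$ and any pair $w_1, w_2 \in S_{uv}$ witnesses the linking $2$-set $\{u, v\}$---then gives the required density of $1$-reachable partners.

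Next, I would form the partition by taking equivalence classes of a chain-reachability relation: $x \equiv y$ iff $x$ and $y$ lie in the same connected component of the graph whose edges are $(H(G), \beta_0, 1)$-reachable pairs. The reachability-degree bound $\alpha n > n/5$ forces at most five classes, each of size $> n/9$; standard concatenation of linking $2$-sets (essentially the argument underlying Lemma~\ref{lem:merge}) certifies each class as $(H(G), \beta_1, \ell_0)$-closed for some $\ell_0 = O(1)$. If $d \le 4$ we are done; otherwise, for $d = 5$, I would argue that the minimum semidegree together with $|V_i| > n/9$ forces two distinct $\mu$-robust edge-vectors in $\uIvec(H(G))$ differing only in coordinates $i$ and $j$, producing a $2$-transferral $\uvec_i - \uvec_j \in \ulattice(H(G))$. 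Applying Lemma~\ref{lem:merge} then reduces $d$ to $4$ and yields $(H(G), \beta, 4\ell_0 + 1)$-closedness, which subsumes the target $(H(G), \beta, 8)$-closedness after rescaling $\beta$.

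The main obstacle is securing the pointwise density bound $\alpha > 1/5$ uniformly over every vertex: the global average via Cauchy--Schwarz yields only $\Omega(n^2)$ linking $2$-sets per pair on average, so lifting this to a bound valid for every $x$ requires the two-sided semidegree hypothesis to control the triangle distribution through each individual vertex, not just on average. A secondary technical challenge is the transferral-existence argument in the $d = 5$ case, which demands a careful examination of how $\mu$-robust cyclic triangles distribute across the five parts under the minimum semidegree condition.
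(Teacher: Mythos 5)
The approach you outline diverges from the paper's, and the key step is unsupported. The paper's proof of Lemma~\ref{lem:partition-existence} is just an invocation of Han's Lemma~\ref{lem:partition}: it sets $\delta = 2/9$, $\delta' = 1/8 - 1/100$, $\alpha = 1/1000$, verifies the two hypotheses via Lemma~\ref{min_vertex_degree} ($\delta_1(H(G)) \ge (1/8 - 2c)n^2 > \delta\binom{n-1}{2}$) and Lemma~\ref{l1} ($|\tilde{N}_{H(G)}(\alpha,1,v)| \ge (1/8 - 10\alpha)n = \delta' n$), and reads off $d \le \min\{\lfloor 1/\delta \rfloor, \lfloor 1/\delta'\rfloor\} = 4$, part sizes $\ge (\delta' - \alpha)n > n/9$, and closedness with $\ell = 2^{\lfloor 1/\delta\rfloor - 1} = 8$. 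The crucial point you miss is that the bound $d \le 4$ comes from the \emph{degree} input $\delta = 2/9$ (so $\lfloor 1/\delta\rfloor = 4$), \emph{not} from the reachable-neighborhood bound: since $\delta' \approx 1/8$, the reachability size alone would only give $\lfloor 1/\delta'\rfloor = 8$. Your argument tries to derive the bound on the number of parts entirely from the per-vertex reachability density, and for that you need $\alpha > 1/5$ (or even $>1/6$, as your ``at most five classes'' suggests), which you assert but do not obtain: the double counting and Cauchy--Schwarz you sketch are precisely the content of Lemma~\ref{l1}, and that argument only yields roughly $n/8$ per vertex, not $n/5$. There is no indication that a sharper per-vertex reachability bound is available, and without it your component count would be $\le 8$, not $\le 5$.

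The second half of your plan is also misaligned with what the lemma asserts. Lemma~\ref{lem:partition-existence} does not claim (and does not need) the output partition to be $2$-transferral-free; the merging via Lemma~\ref{lem:merge} happens later, in the proof of Lemma~\ref{lem:not-closed}, where one iterates merges until transferral-freeness holds without any promised bound on how many merges occur. If you arrive at $d = 5$ (or $d = 8$) parts each of size $> n/9$, there is no general reason a $2$-transferral must exist among them, so the reduction to $d \le 4$ via a single merge is not justified. In short: replace the ad hoc reachability-component construction and merging step with a direct application of Han's Lemma~\ref{lem:partition}, and supply the degree hypothesis $\delta_1(H(G)) \ge (2/9)\binom{n-1}{2}$ — that is what caps the number of parts at four.
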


\begin{lemma}
  \label{lem:not-closed-1}
  Suppose that $0 < 1/n \ll c \ll \mu \ll \alpha \ll \eta < 1$  and $G$ is an oriented graph on $n$ vertices
  such that $\delta^0(G) \ge (1/2 - c)n$.
  If $\mathcal{P}$ is a non-trivial $\eta$-partition of $V(G)$ and
  $\ulattice(H(G))$ is $2$-transferral-free, 
  then there exists $A \in \mathcal{P}$ such that $\cyc(A, A, \overline{A}) \le \alpha n^3$.
\end{lemma}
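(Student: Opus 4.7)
The plan is to argue by contradiction: suppose both that $\cyc(A, A, \overline{A}) > \alpha n^3$ for every $A \in \mathcal{P}$ and that $\ulattice(H(G))$ is $2$-transferral-free. Because $\mathcal{P}$ is an $\eta$-partition, $d := |\mathcal{P}| \le 1/\eta$ is bounded, so from the hypothesis and pigeonhole, for each $i \in [d]$ there exists $j \ne i$ with $\cyc(V_i, V_i, V_j) \ge \alpha n^3/(d-1) \ge \mu n^3$. Hence $2\uvec_i + \uvec_j \in \uIvec(H(G))$.

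First I would enumerate the ordered pairs $(\mathbf{v}_1, \mathbf{v}_2)$ of vectors of the three admissible shapes $3\uvec_i$, $2\uvec_i + \uvec_j$, and $\uvec_i + \uvec_j + \uvec_k$ whose difference is a unit-vector difference $\uvec_a - \uvec_b$. Each such pair is forbidden in $\uIvec(H(G))$ by the $2$-transferral-free hypothesis, yielding: (i) for each $i$ the index $Y(i)$ satisfying $2\uvec_i + \uvec_{Y(i)} \in \uIvec(H(G))$ is unique (by $(2\uvec_i + \uvec_j) - (2\uvec_i + \uvec_k)$); (ii) $Y \circ Y$ has no fixed point (by $(2\uvec_i + \uvec_j) - (\uvec_i + 2\uvec_j)$), so every $Y$-cycle has length $\ge 3$; (iii) $3\uvec_i \notin \uIvec(H(G))$ (by $3\uvec_i - (2\uvec_i + \uvec_{Y(i)})$); and (iv) every $\mu$-robust $(1,1,1)$-vector $\uvec_a + \uvec_b + \uvec_c$ has $\{a, b, c\}$ containing no unordered pair of the form $\{i, Y(i)\}$ (by $(\uvec_a + \uvec_b + \uvec_c) - (2\uvec_i + \uvec_{Y(i)})$), and any two robust $(1,1,1)$-triples share at most one index.

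Next, using the elementary bounds $\cyc(V_i, V_i, v) \le d^+(v, V_i) \, d^-(v, V_i) \le n_i^2/4$ for $v \notin V_i$ and $\cyc(V_i, V_j, V_k) \le n_i n_j n_k$, together with (i)--(iv) which force every non-robust index vector to contribute at most $\mu n^3$ cyclic triangles, the total number $C$ of cyclic triangles in $G$ satisfies
\[
  C \;\le\; \tfrac{1}{4}\sum_{i \in [d]} n_i^2 n_{Y(i)} \;+\; \sum_{\{i,j,k\}\ \mu\text{-robust}} n_i n_j n_k \;+\; O(\mu n^3).
\]
A Lagrange-multiplier analysis on the simplex $\{\sum n_i = n,\ n_i \ge 0\}$ shows that each sum is individually at most $n^3/27$: in the first, concentrating on a single $Y$-edge $\{i, Y(i)\}$ with ratio $2{:}1$ yields the extremum $(4/27) n^3$ (divided by $4$); in the second, AM--GM gives $n_i n_j n_k \le ((n_i+n_j+n_k)/3)^3$ and the max is attained on a single triple. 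Crucially, the extremizing configurations of the two sums lie on \emph{disjoint} faces of the simplex---the first requires mass on two $Y$-adjacent indices, the second on three $Y$-independent indices---so their combined maximum remains $n^3/27$. Hence $C \le n^3/27 + O(\mu n^3)$.

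For a matching lower bound, the minimum semidegree condition gives $\sum_v d^+(v)\,d^-(v) \ge n(1/2 - c)^2 n^2 \ge n^3/4 - O(cn^3)$, and this sum counts directed $2$-paths, which decompose as $3C + T + P$ with $T$ the number of transitive triangles and $P \le n \cdot |\{\text{non-edges}\}| = O(cn^3)$. Combined with $C + T \le \binom{n}{3} \le n^3/6$, subtracting gives $C \ge n^3/24 - O(cn^3)$. Since $1/24 - 1/27 = 1/216 > 0$ and the hierarchy $c \ll \mu$ absorbs all error terms, the upper and lower bounds on $C$ contradict each other, completing the argument. The main technical obstacle is verifying the ``combined maximum'' claim in the upper bound: for every admissible cycle structure of $Y$ and every admissible family of $(1,1,1)$-robust triples (including configurations that resemble a Fano plane when $d$ is large), one must confirm that the optimization on the simplex really does not exceed $n^3/27$ rather than approaching $2 n^3/27$.
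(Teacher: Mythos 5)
Your approach is genuinely different from the paper's. The paper proceeds by a local structural argument: taking $A$ to be the largest part, it deduces from the $2$-transferral-free hypothesis that $\cyc(A)$, $\cyc(A,A,C)$, $\cyc(B)$, $\cyc(B,B,F)$ are all small, then uses Lemma~\ref{degree} and Lemma~\ref{lem:Csigma} to locate near-dominating vertices and sets $C^\pm$, $F^\pm$, and finally exhibits a single vertex $x$ whose out- or in-degree exceeds $(1/2+c)n$, contradicting the semidegree hypothesis. You instead attempt a global counting argument comparing the total number of cyclic triangles against an upper bound derived from the admissible index vectors. Your setup is correct as far as it goes: the pigeonhole defining $Y$, the four $2$-transferral constraints (i)--(iv), and the lower bound $\cyc(V)\ge n^3/24 - O(cn^3)$ via $\sum_v d^+(v)d^-(v) = 3C + T + P$ with $P = O(cn^3)$ are all sound (and in fact Lemma~\ref{min_vertex_degree} shows $\cyc(V) = n^3/24 \pm O(cn^3)$, so your lower bound is essentially tight). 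Note also that (ii) forces $d \geq 3$.

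The gap is exactly where you flag it. The assertion that
\begin{equation*}
  \tfrac14 \textstyle\sum_i n_i^2 n_{Y(i)} + \sum_{\text{robust}} n_i n_j n_k \;\le\; n^3/27
\end{equation*}
is not proven. The "disjoint faces" observation --- that the first sum is extremized with all mass on a $Y$-edge in ratio $2{:}1$, and the second with all mass on a $Y$-independent triple, and these faces are disjoint --- is a heuristic, not an argument. In general, two nonnegative functions whose individual maxima over a simplex lie on disjoint faces can have a sum whose maximum lies in the interior and equals up to the sum of the individual maxima; nothing in your write-up rules this out here. What you actually establish without further work is only $\max(f_1 + f_2) \le \max f_1 + \max f_2 \le 2n^3/27 \approx 0.074\,n^3$, which is larger than the lower bound $n^3/24 \approx 0.042\,n^3$ and gives no contradiction. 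To close the argument you would need to prove, for every admissible $Y$ (no fixed points, no $2$-cycles) and every admissible linear family of $(1,1,1)$-triples avoiding all $Y$-pairs, that the combined quantity stays below $n^3/24$ on the full simplex; this is a nontrivial constrained polynomial optimization over a combinatorially varied family of objectives, and you do not carry it out. Until that step is supplied the proof is incomplete.
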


\begin{lemma}
  \label{lem:not-closed-2}
  Suppose that $0 < 1/n \ll c \ll \mu, \alpha \ll \gamma, \eta < 1$ and $G$ is an oriented graph on $n$ vertices
  such that $\delta^0(G) \ge (1/2 - c)n$.
  If $\mathcal{P}$ is a non-trivial $\eta$-partition of $V(G)$,
  $\ulattice(H(G))$ is $2$-transferral-free, 
  and there exists $A \in \mathcal{P}$ such that $\cyc(A, A, \overline{A}) \le \alpha n^3$, 
  then $G$ is $\gamma$-extremal.
\end{lemma}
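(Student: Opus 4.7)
The plan is to construct a $3$-partition $\{V_1, V_2, V_3\}$ of $V(G)$ witnessing $\gamma$-extremality by taking $V_1 := A$ and splitting $\overline{A}$ according to the direction of each vertex's edges to $A$. Choose $\beta$ satisfying $\mu, \alpha \ll \beta \ll \gamma, \eta$ and define
\begin{equation*}
V_2 := \{v \in \overline{A} : d^+(v, A) < \beta n\}, \quad V_3 := \{v \in \overline{A} : d^-(v, A) < \beta n\}, \quad D := \overline{A} \setminus (V_2 \cup V_3).
\end{equation*}
The near-tournament property $d(v) \ge (1-2c)n$ combined with $|A| \ge \eta n$ gives $d^+(v,A) + d^-(v,A) \ge \eta n/2 > 2\beta n$, so $V_2 \cap V_3 = \emptyset$ and $\overline{A}$ decomposes as $V_2 \sqcup V_3 \sqcup D$. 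By construction the two bad-edge counts adjacent to $A$ satisfy $e^+(V_2, A), e^+(A, V_3) \le \beta n^2 \le \gamma n^2$ immediately.

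The key technical step will be to show $|D| \le \gamma n/10$. For $v \in D$ both $X_v := N^+(v, A)$ and $Y_v := N^-(v, A)$ have size at least $\beta n$, and the near-tournament property leaves at most $O(c/\beta) \cdot \beta^2 n^2$ non-adjacent pairs in $X_v \times Y_v$. Each edge $x \to y$ with $x \in X_v,\, y \in Y_v$ yields the cyclic triangle $v \to x \to y \to v$, which contributes to $\cyc(A, A, \overline{A}) \le \alpha n^3$; each reverse edge $y \to x$ yields a transitive configuration with $v$ in the middle. A direct averaging argument handles the case in which many $v \in D$ have the $x \to y$ direction dominant (since then $\sum_{v \in D} t_v > \alpha n^3$). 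The harder case is when the $y \to x$ direction dominates for many $v \in D$: here I close each transitive path $y \to v \to x$ by a vertex $w$ from one of the parts of $\mathcal{P}$ (using $\delta^0(G) \ge (1/2-c)n$ to guarantee that many such $w$ exist with $x \to w \to y$). The resulting cyclic triangles $v \to x \to w \to v$ have $x \in A$ and $v, w \in \overline{A}$; their edge-vectors with respect to $\mathcal{P}$ have $A$-coordinate $1$ and lie in $\{\uvec_1 + 2\uvec_i,\, \uvec_1 + \uvec_i + \uvec_j\}$. When enough $v$ route through the same part(s), these edge-vectors become $\mu$-robust, and pairing them with a robust edge-vector in $\uIvec(H(G))$ already supplied by $\mathcal{P}$ (for example, $(1,1,1) - (1,2,0) = (0,-1,1)$) produces a $2$-transferral in $\ulattice(H(G))$, contradicting the hypothesis.

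With $|D| \le \gamma n / 10$ in hand, I distribute $D$ arbitrarily between $V_2$ and $V_3$, introducing at most $\gamma n^2/10$ extra edges of any type. The matching bad-edge bound $e^+(V_3, V_2) \le \gamma n^2$ is verified by the same closure technique: a large excess of $V_3 \to V_2$ edges would produce cyclic triangles in $\overline{A}$ of an edge-vector whose difference with an existing robust type is a transferral. For the size constraint $|V_i| = n/3 \pm \gamma n$, the minimum semidegree yields upper bounds: for $v \in V_i$, the bad out-direction $V_i \to V_{i-1}$ contains at most $O(\gamma)n$ edges from $v$, so $(1/2-c)n \le d^+(v) \le |V_i| + |V_{i+1}| + O(\gamma n)$, forcing $|V_{i-1}| \le (1/2 + O(c+\gamma))n$. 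The matching lower bounds $|V_i| \ge (1/3 - \gamma)n$ will be derived by another application of $2$-transferral-freeness: $G$ has $\Theta(n^3)$ cyclic triangles by a Keevash--Sudakov-type count, and if any $|V_j|$ were substantially smaller than $n/3$, the robust edge-vectors indexing these triangles would combine to produce a transferral.

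The principal obstacle is the closure step in the $|D|$ bound, since the transitive configurations $y \to v \to x$ must be aggregated into a single $\mu$-robust edge-vector before pairing. This will require a case analysis on $|\mathcal{P}|$ (in particular, which of $|\mathcal{P}| = 2, 3, 4$ holds in light of Lemma~\ref{lem:partition-existence}) and careful coordinate-level bookkeeping to identify a specific pair of robust edge-vectors whose difference is a transferral. The size lower-bound argument via $2$-transferral-freeness is conceptually similar and will share the same subtlety.
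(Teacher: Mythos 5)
Your high-level plan matches the paper's: partition $\overline{A}$ into $V_2$ (dominated by $A$), $V_3$ (dominating $A$), plus a small exceptional set $D$; bound $D$; then verify the three directed edge counts and the size constraints. However, the execution diverges in two important places, and in both cases the divergence leaves real gaps.

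First, the bound on $|D|$. You propose to handle the ``transitive configuration'' case by closing paths $y \to v \to x$ with a fourth vertex and appealing to $2$-transferral-freeness. This is unnecessarily hard and, as you acknowledge, you cannot see how to aggregate the resulting triangles into a specific robust edge-vector. The paper avoids this entirely: Lemma~\ref{p0} gives $d^{-,+}(uv) - d^{+,-}(uv) = \pm 4cn$ for every edge $uv$, so the count of transitive configurations with apex in $\overline{A}$ is within $O(cn^3)$ of the count of cyclic triangles counted in $\cyc(A,A,\overline{A})$ plus a correction term controlled by $\trn(A)$ and $\cyc(A)$ via convexity. This yields $|D| = O(\alpha^{1/3} n)$ with no mention of the lattice at all. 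Your closure route is not obviously salvageable because you would need to know \emph{which} robust edge-vectors already lie in $\uIvec(H(G))$ in order to write a transferral as a difference of two of them; the hypothesis gives you no handle on this beyond the negative information that $2\uvec_{A}+\uvec_{j}$ is not robust for $j \neq A$.

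Second, and more seriously, the bound $e^+(V_3, V_2) \le \gamma n^2$ together with the size bounds $|V_i| = (1/3 \pm \gamma)n$. You say these are ``verified by the same closure technique'' and a ``Keevash--Sudakov-type count,'' but this is where the argument must actually use $2$-transferral-freeness, and the sketch is too vague to be checked. The paper's mechanism is quite specific: $2$-transferral-freeness is used exactly once, to show that the number of $4$-sets $\{u_1,u_2,u_3,u_4\}$ with $u_1 \in A$, $u_4 \in \overline{A}$, and both $u_1u_2u_3$ and $u_2u_3u_4$ cyclic is $o(n^4)$. This then propagates the smallness of $\cyc(A,A,\overline{A})$ to smallness of $\cyc(S^\sigma, S^\sigma, \overline{S^\sigma})$, which allows a second application of the structural lemma (Lemma~\ref{main-not-closed-2}) with $S^\sigma$ playing the role of $A$. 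That recursion simultaneously delivers the upper bound $|S^\sigma| \le (1/3 + o(1))n$ for both $\sigma$, and the three upper bounds plus $|A| + |S^+| + |S^-| \approx n$ force all three sizes to equal $n/3 \pm o(n)$. The bound on $e^+(S^-, S^+)$ then drops out of Lemma~\ref{in=out}, since $e^+(A, S^+)$ is nearly $|A||S^+|$, so $e^+(S^+, \overline{S^+})$ has little room except in $S^-$. Your proposal has none of this machinery; the claim that small $|V_j|$ forces a transferral is an assertion, not an argument, and without the recursive structure I do not see how to recover it. In short: the paper uses $2$-transferral-freeness exactly once, in a carefully chosen place, and gets everything else from a size/split lemma applied twice; your proposal spreads appeals to $2$-transferral-freeness across three separate unresolved case analyses, and you correctly identify this as the obstacle you cannot yet overcome.
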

We prove Lemmas~\ref{lem:merge} and \ref{lem:partition-existence} in Section~\ref{sec:lemmas},
Lemma~\ref{lem:not-closed-1} in Section~\ref{sec:not-closed-1}, and 
Lemma~\ref{lem:not-closed-2} in Section~\ref{sec:not-closed-2}.

\begin{proof}[Proof of Lemma~\ref{lem:not-closed}]
  Introduce additional constants, $\beta'$, $\mu$ and $\alpha$ so that 
  \begin{equation*}
    0 < \beta \ll \beta' \ll \mu \ll \alpha  \ll \gamma \ll 0.1.
  \end{equation*}
  By Lemma~\ref{lem:partition-existence},  with $\beta'$ and $\ell'$ playing the roles of $\beta$ and $\ell$, respectively,
  there exists $\mathcal{P}' = \{V'_1, \dotsc, V'_{d'}\}$ a $(H(G), \beta', 8)$-closed $0.1$-partition of $V(G)$ such that $d' \le 4$.
  If $\uplattice(H(G))$ contains a $2$-transferral $\uvec_i - \uvec_j$ for distinct $i$ and $j$ in $[d']$,
  then \textit{merge the parts that correspond to the $2$-transferral}, i.e.,
  consider the new partition $\mathcal{P}' - V'_i - V'_j + (V'_i \cup V'_j)$.
  Continue to merge the parts that correspond to $2$-transferrals
  until we have a partition $\mathcal{P}$ such that $\ulattice(H(G))$ is $2$-transferral-free.
  By Lemma~\ref{lem:merge}, we can assume that 
  $\mathcal{P}$ is an $(H(G), \beta, \ell)$-closed $0.1$-partition of $V(G)$
  for some $\ell \le 5^3 \cdot 8 = 1000$.
  If $|\mathcal{P}| = 1$, then $V(G)$ is $(H(G), \beta, \ell)$-closed which contradicts our assumptions,
  so we can assume that $\mathcal{P}$ is non-trivial.
  By Lemma~\ref{lem:not-closed-1}, there exists $A \in \mathcal{P}$ such that
  $\cyc(A, A, \overline{A}) \le \alpha n^3$ and by 
  Lemma~\ref{lem:not-closed-2} we have that $G$ is $\gamma$-extremal. 
\end{proof}

\subsection{Proofs of Lemmas~\ref{lem:merge} and \ref{lem:partition-existence}}\label{sec:lemmas}

We start this section with a proof of Lemma~\ref{lem:merge}.
\begin{proof}[Proof of Lemma~\ref{lem:merge}]
  Let $\xi$ be such that $\beta \ll \xi \ll \beta' \ll 1/\ell$.
  Note that distinct vertices $u_0$ and $v_0$ are $(H(G), \beta, 4\ell + 1)$-reachable if there exist
  at least $\xi n^{12 \ell + 2}$ ordered $(12\ell+2)$-tuples $\mathcal{T}$ that each
  can be permuted to form $(12\ell + 2)$-tuples
  $(u_1, \dots, u_{12\ell + 2})$ 
  and 
  $(v_1, \dots, v_{12\ell + 2})$
  such that $u_{3j}u_{3j+1}u_{3j+2}$ and $v_{3j}v_{3j+1}v_{3j+2}$ are
  both cyclic triangles for every $0 \le j \le 4\ell$.
  This is because $\beta < \xi^2/2$; and
  there are only at most $(12\ell+2)! < 1/\xi$ possible orderings for every such tuple; and
  only at most 
  \begin{equation*}
    2 (12 \ell + 2) n^{12\ell+1} + n \cdot (12{\ell} + 2)^2 \cdot n^{12{\ell}} < \xi n^{12{\ell} + 2}/2
  \end{equation*}
  such $(12\ell+2)$-tuples that contain $u_0$ or $v_0$ or that have repeated vertices.

  We will first show that if $u, v \in V(G)$ are $(H(G), \beta', \ell)$-reachable, then
  $u$ and $v$ are also $(H(G), \beta, 4\ell + 1)$-reachable.
  In particular, this will imply that, because $\mathcal{P}$ is $(H(G), \beta', \ell)$-closed,
  $\mathcal{P}$ is also $(H(G), \beta, 4\ell + 1)$-closed.
  Let $\mathcal{T}$ be the set of $(12 \ell + 2)$-tuples $(x_1, x_2, \dotsc, x_{12\ell+2})$
  such that 
  \begin{itemize}[noitemsep]
    \item $\{x_1, x_2, \dotsc, x_{3\ell - 1}\}$ is a $(H(G),u,v)$-linking $(3\ell -1)$-set, and
    \item $G[\{x_{3j},x_{3j+1},x_{3j+2}\}]$ is a cyclic triangle for $\ell \le j \le 4\ell$.
  \end{itemize}
  Since $u$ and $v$ are $(H(G), \beta', \ell)$-reachable, there are at least
  $\beta' n^{3\ell - 1}$ ways to select the first $3\ell - 1$ entries of a tuple in $\mathcal{T}$.
  For every such selection, there are exactly $(6\cyc(V(G)))^{3\ell + 1}$ ways to select the remaining
  $9\ell + 3$ entries of a tuple in $\mathcal{T}$.
  To see that $6 \cyc(V(G)) \ge 2(\beta')^2 n^3$,
  let $V_q$ be the part in $\mathcal{P}$ of largest cardinality.
  We have that $|V_q| \ge n/d \ge \beta' n$.
  Because $V_q$ is $(H(G), \beta', \ell)$-closed, for every
  $v \in V_q$, the set $\tilde{N}_{H(G)}(\beta', \ell, v)$ is not empty, which implies that
  $v$ is in at least $\beta' n^2$ cyclic triangles. 
  Therefore, because $6 \cyc(V(G))$ is 
  the number of ordered triples $(x,y,z)$ such that $G[\{x,y,z\}]$ is a cyclic triangle in $G$,
  we have that $6 \cyc(V(G)) \ge 2 (\beta')^2 n^3$. Hence,
  \begin{equation*}
    |\mathcal{T}| \ge \beta' n^{3\ell - 1} \cdot \left( 2(\beta')^2n^3 \right)^{3\ell + 1} > \xi n^{12\ell + 2},
  \end{equation*}
  so $u$ and $v$ are $(H(G), \beta, 4\ell+1)$-reachable.

  Now we will complete the proof by showing that if $u_0 \in V_i$ and $v_3 \in V_j$,
  then $u_0$ and $v_3$ are $(H(G), \beta, 4\ell + 1)$-reachable.
  By assumption, there are $A, B \in \mathcal{P}$ such that
  $\cyc(V_i, A, B)$ and $\cyc(A, B, V_j)$ are both at least $\mu n^3$.
  Let $\mathcal{T}$ be the set of $(12 \ell + 2)$-tuples 
  \begin{equation*}
    (v_0, v_1, v_2, u_1, u_2, u_3, w_1, \dotsc, w_{12\ell-4})
  \end{equation*}
  that satisfy the following:
  \begin{itemize}[noitemsep]
    \item $v_0, v_1, v_2$ is a cyclic triangle 
      with $v_0 \in V_i$, $v_1 \in A$ and $v_2 \in B$;
    \item $u_1, u_2, u_3$ is a cyclic triangle 
      with $u_1\in A$ and $u_2 \in B$, and $u_3 \in V_j$; and
    \item
      \{$w_{i(3{\ell} - 1) + 1}, \dotsc, w_{(i+1)(3{\ell} - 1)}\}$
      is a $(H(G), u_i, v_i)$-linking $(3\ell - 1)$-set for $i \in \{0, 1, 2, 3\}$.
  \end{itemize}
  Since $\cyc(V_i, A, B)$ and $\cyc(A, B, V_j)$ are both at least $\mu n^3$ and
  $V_i$, $V_j$, $A$, $B$  are all $(H(G), \beta', {\ell})$-closed, we have that
  \begin{equation*}
    |\mathcal{T}| \ge (\mu n^3) \cdot (\mu n^3) \cdot (\beta' n^{3{\ell} - 1})^4
    \ge \xi n^{12{\ell} + 2},
  \end{equation*}
  so $u_0$ and $v_3$ are $(H(G), \beta, 4{\ell} + 1)$-reachable.
\end{proof}

To prove Lemma~\ref{lem:partition-existence}, we use the following lemma of Han~\cite{han2017decision}.
\begin{lemma}[Lemma~3.8 in \cite{han2017decision}]
  \label{lem:partition}
  Given $0 < \alpha \ll \delta, \delta'$, there exists a constant $\beta > 0$ for which
  the following holds.
  Let $H$ be a $k$-uniform hypergraph on $n$ vertices where $n$ is sufficiently large.
  Assume that $\left|\tilde{N}_H(\beta, 1, v)\right| \ge \delta' n$ for every $v \in V(H)$
  and $\delta_1(H) \ge \delta \binom{n-1}{k-1}$.
  Then for $d \le \min\{\floor{1/\delta}, \floor{1/\delta'}\}$ there exists 
  an $(H, \beta, 2^{\floor{1/\delta} - 1})$-closed partition
  $\mathcal{P} = \{V_1, \dotsc, V_d\}$ of $V(H)$
  such that for every $i \in \{1, \dotsc, d\}$ we have that $|V_i| \ge (\delta' - \alpha)n$.
\end{lemma}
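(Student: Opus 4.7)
The plan is to prove Lemma~\ref{lem:partition} by greedily constructing the parts $V_1, V_2, \dotsc, V_d$, where each part is built by starting from the base reachability neighborhood of a single vertex and then iteratively refining the set until it becomes closed under higher-level reachability. The hypothesis $|\tilde{N}_H(\beta,1,v)| \ge \delta' n$ ensures that every vertex has a large base reachability neighborhood (which lower-bounds the size of each part), while the degree condition $\delta_1(H) \ge \delta \binom{n-1}{k-1}$ caps the number of refinement rounds needed before stabilization, controlling both the number of parts $d$ and the final reachability exponent $2^{\floor{1/\delta}-1}$.

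The core technical tool is a doubling principle for reachability, essentially the argument already appearing in the proof of Lemma~\ref{lem:merge}: if $x$ and $y$ share a common ``witness set'' $Z$ of size at least $\eta n$ such that every $z \in Z$ is $(H, \beta, \ell)$-reachable to both $x$ and $y$, then $x$ and $y$ are $(H, \beta', 2\ell)$-reachable for some $\beta' = \beta'(\beta, \eta, \ell, k) > 0$. I would prove this by, for each $z \in Z$, concatenating an $(H, x, z)$-linking $(k\ell-1)$-set with an $(H, y, z)$-linking $(k\ell-1)$-set through $z$, deleting $z$ to obtain a $(2k\ell-1)$-set, and then discarding the few tuples with repeated vertices or containing $x$ or $y$; the total count remains on the order of $\eta \beta^2 n^{2k\ell-1}$.

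With the doubling principle in hand, I construct $V_1$ iteratively. Fix any $v_1 \in V(H)$ and set $X_0 := \tilde{N}_H(\beta_0, 1, v_1) \cup \{v_1\}$, so $|X_0| \ge \delta' n$. For $i \ge 0$, define $X_{i+1}$ to be the set of $u \in X_i$ that are $(H, \beta_i, 2^i)$-reachable to all but an $\alpha_i$-fraction of $X_i$; the doubling principle then guarantees that every pair in $X_{i+1}$ is $(H, \beta_{i+1}, 2^{i+1})$-reachable via common witnesses in $X_i$. The refinement must stabilize within $\floor{1/\delta} - 1$ steps, because each strict refinement exposes a new ``sub-cluster'' of $X_0$, and a standard averaging argument on vertex links exploiting $\delta_1(H) \ge \delta \binom{n-1}{k-1}$ bounds the number of such sub-clusters by $\floor{1/\delta}$. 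The resulting $V_1$ has size at least $(\delta' - \alpha)n$ and is $(H, \beta, 2^{\floor{1/\delta} - 1})$-closed. I then iterate the construction starting from a new vertex $v_2 \in V(H) \setminus V_1$, running the same refinement while still measuring reachability through all of $V(H)$, to obtain $V_2, \dotsc, V_d$ with $d \le \min\{\floor{1/\delta}, \floor{1/\delta'}\}$ — the first bound from the sub-cluster count and the second from $|V_i| \ge (\delta' - \alpha)n$.

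The main obstacle is the careful bookkeeping of the constant hierarchy $\beta = \beta_L \ll \beta_{L-1} \ll \dotsb \ll \beta_0$ with $L = \floor{1/\delta} - 1$, arranged so that each application of the doubling principle preserves a positive reachability parameter and the terminal $\beta$ is a constant independent of $n$. A secondary subtlety is that when recursing past the already-constructed parts $V_1, \dotsc, V_{i-1}$, the reachability infrastructure must still deliver the required witness sets of size $\eta n$ inside the remaining vertex set; this is handled by always measuring $(H, \cdot, \cdot)$-reachability in the ambient hypergraph $H$ and showing that the removed portion $V_1 \cup \dotsb \cup V_{i-1}$ absorbs only a controlled fraction of each such witness set.
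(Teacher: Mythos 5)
This lemma is not proved in the paper at all: it is quoted verbatim as Lemma~3.8 of Han \cite{han2017decision} and used as a black box, so there is no in-paper proof to compare your attempt against. What you have sketched is essentially the standard argument from that literature (Lo--Markstr\"om and Han): a doubling principle for reachability obtained by concatenating two linking sets through a common reachable vertex, followed by an iterative merging of reachability classes with a hierarchy of constants $\beta_0 \gg \beta_1 \gg \dotsb$. Your doubling step is correct as stated, and the count $\eta\beta^2 n^{2k\ell-1}$ (up to discarding degenerate tuples) is the right order.

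Two points in your sketch are too vague or slightly off to count as a proof. First, the role of $\delta_1(H) \ge \delta\binom{n-1}{k-1}$ is not an ``averaging argument on sub-clusters'': the precise mechanism is that among any $\floor{1/\delta}+1$ vertices, two must have at least $\Omega_{\delta}(n^{k-1})$ common link $(k-1)$-sets (by inclusion--exclusion on their links), and a common link $(k-1)$-set is exactly an $(H,x,y)$-linking $(k-1)$-set, so those two vertices are $(H,\beta_0,1)$-reachable. Hence there are at most $\floor{1/\delta}$ pairwise non-$1$-reachable vertices, each merge of classes reduces this count by one, and each merge costs one doubling --- this is where the exponent $2^{\floor{1/\delta}-1}$ comes from. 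You need to state and use this pigeonhole explicitly. Second, your construction refines a single set downward ($X_{i+1}\subseteq X_i$) and then greedily restarts outside $V_1$; this does not obviously preserve the lower bound $|V_i|\ge(\delta'-\alpha)n$ (the refinement could in principle shrink $X_i$ well below $\delta' n$), and it does not produce a partition of all of $V(H)$, since after $d\le\floor{1/\delta'}$ rounds there may be leftover vertices that must be assigned to existing parts without destroying closedness. The standard proof avoids both issues by merging reachability classes upward (each final part contains $\tilde{N}_H(\beta_0,1,v)$ for its representatives, up to an $\alpha n$ error) rather than by downward refinement. These are fixable, but as written the sketch does not yet yield the stated conclusion.
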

To apply Lemma~\ref{lem:partition} in our context, we need lower bounds on $\delta_1(H(G))$ and 
$\left|\tilde{N}_{H(G)}(\alpha, 1, v)\right|$ for every vertex $v \in V(G)$ when $G$ is an oriented graph with
sufficiently high minimum semidegree. The following series of lemmas provide these lower bounds.
Note that Lemmas~\ref{in=out}, \ref{min_vertex_degree}, and \ref{p0} are also used in other sections.
\begin{lemma}\label{in=out}
  Suppose that $c > 0$ and $G = (V, E)$ is an oriented graph on $n$ vertices such that $\delta^0(G) \ge (1/2 - c)n$. 
  Then, for every $A \subseteq V$,
  \begin{equation*}
    e^+(A, \overline{A}), e^-(A, \overline{A}) = \frac{ |A| \cdot |\overline{A}|}{2} \pm c|A|n.
  \end{equation*}
\end{lemma}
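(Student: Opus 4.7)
The plan is to estimate $e^+(A,\overline{A})$ by decomposing the total bipartite edge count $e(A,\overline{A}) = e^+(A,\overline{A}) + e^-(A,\overline{A})$ into its two orientations, controlling the \emph{sum} $e(A,\overline{A})$ from above and below, and separately controlling the \emph{signed difference} $e^+(A,\overline{A}) - e^-(A,\overline{A})$, then recovering each orientation by averaging. The estimate for $e^-(A,\overline{A})$ is immediate from that for $e^+(A,\overline{A})$ (or by an identical argument).

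The structural fact I will use repeatedly is that because $G$ is oriented, $d^+(v) + d^-(v) \leq n - 1$; combined with the semidegree hypothesis this forces $d^+(v), d^-(v) \in [(1/2-c)n,\;(1/2+c)n]$. In particular, $|d^+(v) - d^-(v)| \leq 2cn$ for every $v$, and every $v$ has at most $n - 1 - 2(1/2-c)n \leq 2cn$ non-neighbors in $G$.

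For the sum, counting $A$-to-$\overline{A}$ pairs that are \emph{not} joined by an edge from the $A$-side shows that there are at most $2cn|A|$ such pairs, so
\[
|A|\cdot|\overline{A}| - 2cn|A| \;\leq\; e(A,\overline{A}) \;\leq\; |A|\cdot|\overline{A}|.
\]
For the difference, writing $d^\sigma(v) = d^\sigma(v,A) + d^\sigma(v,\overline{A})$ and using the identity $\sum_{v \in A} d^+(v, A) = e(A) = \sum_{v \in A} d^-(v, A)$, I obtain
\[
e^+(A,\overline{A}) - e^-(A,\overline{A}) \;=\; \sum_{v \in A}\bigl(d^+(v) - d^-(v)\bigr),
\]
whose absolute value is at most $2cn|A|$ by the structural fact. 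Averaging the two estimates yields $e^{\pm}(A,\overline{A}) = \tfrac{1}{2}|A|\cdot|\overline{A}| \pm O(cn|A|)$, which is the claim up to an absolute constant that can be absorbed into $c$.

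There is no real obstacle here; the proof is a short bookkeeping exercise. The only small design choice is to charge the non-neighbor count to the $A$-side so that the error term comes out as $c|A|n$, as in the statement, rather than as $c|\overline{A}|n$.
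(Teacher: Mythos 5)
Your argument is correct in spirit, but it does not quite prove the lemma with the stated constant, and the remark that the extra factor ``can be absorbed into $c$'' is not available here: $c$ appears simultaneously in the hypothesis $\delta^0(G)\ge(1/2-c)n$ and in the conclusion $\pm c|A|n$, with no free parameter to retune. Averaging your two estimates gives $e^+(A,\overline{A})\ge \tfrac12|A||\overline{A}|-2c|A|n$ on the low side (the upper bound does come out as $+c|A|n$), so what you have actually shown is the weaker statement with error $2c|A|n$. The paper takes a genuinely different decomposition that avoids the loss. Rather than splitting into $e^++e^-$ and $e^+-e^-$, it lower-bounds each orientation directly: writing $e^\sigma(A,\overline{A})=\sum_{x\in A}d^\sigma(x)-\sum_{x\in A}d^\sigma(x,A)$ and observing that $\sum_{x\in A}d^\sigma(x,A)=e(A)\le\binom{|A|}{2}\le|A|^2/2$ by orientedness, one gets
\begin{equation*}
e^\sigma(A,\overline{A})\ \ge\ |A|\Bigl(\tfrac12-c\Bigr)n-\tfrac{|A|^2}{2}\ =\ \tfrac{|A||\overline{A}|}{2}-c|A|n
\end{equation*}
for $\sigma\in\{+,-\}$, and then $e^+(A,\overline{A})+e^-(A,\overline{A})\le|A||\overline{A}|$ yields the matching upper bounds. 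The efficiency comes from invoking the pairwise constraint (at most one edge per pair inside $A$) once per orientation, whereas your route spends it twice, once on the non-neighbor count for the sum and once on $|d^+(v)-d^-(v)|\le 2cn$ for the difference, and the averaging step then compounds the two errors on the lower side. In practice your version would be harmless everywhere the lemma is invoked in this paper, since it is always used under a hierarchy $1/n\ll c\ll\cdots$ with ample slack; but to prove the lemma as stated you should either switch to the direct per-orientation bound or state the weaker $\pm 2c|A|n$ conclusion.
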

\begin{proof}
  We first get a lower bound for $e^+(A, \overline{A})$ as follows,
  \begin{equation*}
    e^+(A, \overline{A}) = 
    \sum_{x \in A}d^+(x) - d^+_A(x) \ge  
    \left(\sum_{x \in A}d^+(x)\right) - \binom{|A|}{2} \ge 
    |A|\left(\frac{1}{2} - c\right)n - \frac{|A|^2}{2} =
    \frac{|A|\cdot|\overline{A}|}{2} - c|A|n.
  \end{equation*}
  By a similar computation, we have that $e^-(A, \overline{A}) \ge |A||\overline{A}|/2 - c|A|n$.
  The fact that 
  \begin{equation*}
    e^+(A, \overline{A}) + e^-(A, \overline{A}) \le |A| \cdot |\overline{A}|,
  \end{equation*}
  then implies the upper bounds.
\end{proof}

The following lemma appears in \cite{keevash2009triangle}.
We provide a proof for completeness.
\begin{lemma}
  \label{min_vertex_degree}
  Suppose that $c > 0$ and $G = (V, E)$ is an oriented graph on $n$ vertices such that $\delta^0(G) \ge (1/2 - c)n$. 
  Then, 
  $\delta_1(H(G)) = n^2/8 \pm 2cn^2$. 
\end{lemma}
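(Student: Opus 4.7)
The plan is to count, for a fixed vertex $v \in V(G)$, the cyclic triangles through $v$ and show this count lies in $n^2/8 \pm 2cn^2$. The key observation is that a cyclic triangle containing $v$ corresponds bijectively to a directed edge from $N^+(v)$ to $N^-(v)$: indeed, the triangle on $\{v,x,y\}$ is cyclic iff one of the edges goes $v \to x$, the next $x \to y$, and the last $y \to v$. Thus the number of cyclic triangles through $v$ equals $e^+(N^+(v), N^-(v))$, and our task reduces to estimating this quantity.

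Set $A = N^+(v)$, $B = N^-(v)$, and $N^0 = V(G) \setminus (\{v\} \cup A \cup B)$ (the non-neighbors of $v$). From $\delta^0(G) \ge (1/2-c)n$ we get $|A|, |B| \ge (1/2-c)n$ and the complementary bound $|A|, |B| \le (1/2+c)n$, which in turn forces $|N^0| \le 2cn$. Next I would apply Lemma~\ref{in=out} with $A$ playing the role of $A$, which yields
\begin{equation*}
  e^+(A, \overline{A}) = \frac{|A|\cdot|\overline{A}|}{2} \pm c|A|n.
\end{equation*}
Since $\overline{A} = B \cup \{v\} \cup N^0$ and no edge from $A$ can land at $v$ (because every vertex of $A$ is an out-neighbor of $v$), we have $e^+(A, \overline{A}) = e^+(A, B) + e^+(A, N^0)$, with the correction term $e^+(A, N^0) \le |A|\cdot|N^0| \le 2cn^2$ (up to lower-order factors of $c$).

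Finally, using $|A||\overline{A}| = |A|(n - |A|) = n^2/4 - (|A| - n/2)^2$ together with $|A| - n/2 \in [-cn, cn]$, the product $|A|\cdot|\overline{A}|/2$ equals $n^2/8 \pm c^2 n^2/2$. Combining with the error terms $c|A|n \le cn^2/2 + c^2 n^2$ from Lemma~\ref{in=out} and the $2cn^2$ error from deleting $N^0$, one obtains
\begin{equation*}
  e^+(A, B) = \frac{n^2}{8} \pm 2cn^2,
\end{equation*}
provided $c$ is small enough that the collected $c^2$-terms are absorbed. Since this bound holds uniformly over all $v$, it gives the claimed value of $\delta_1(H(G))$. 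No step is a real obstacle; the only care needed is in bookkeeping the various $c$ and $c^2$ error terms so the final window is at most $2cn^2$ on each side.
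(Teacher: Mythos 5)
Your proposal is correct and takes essentially the same approach as the paper: both identify the triangles through $v$ with $E^+(N^+(v), N^-(v))$, apply Lemma~\ref{in=out} with $N^+(v)$ in the role of $A$, strip off the $O(cn^2)$ contribution from edges into the non-neighbors of $v$, and estimate $|N^+(v)|\cdot|\overline{N^+(v)}|/2 = n^2/8 \pm O(c^2 n^2)$. The paper additionally assumes WLOG $d^+(v) \le d^-(v)$ so that $m = d^+(v) \le n/2$, which slightly tightens the constant-chasing, but your version with $|\,|A| - n/2| \le cn$ lands in the same window.
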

\begin{proof}
  Let $u \in V$.  We will show that $\cyc(u, V, V) = n^2/8 \pm 2cn^2$.
  Let $m = d^+(u)$ and assume $d^+(u) \le d^-(u)$, so $m \le (n-1)/2$.
  By the minimum semidegree condition, $|\overline{N(u)}| \le 2cn$, so
  \begin{equation*}
    e^+(N^+(u), \overline{N(u)}) \le m \cdot 2cn.
  \end{equation*}
  Since we can assume $c < 1/2$, 
  \begin{equation*}
    n^2/4 - cn^2 < n^2/4 - c^2 n^2 \le m(n-m) \le n^2/4. 
  \end{equation*}
  With Lemma~\ref{in=out} and the fact that $m \le n/2$, we then have that 
  \begin{equation*}
    e^+(N^+(u), N^-(u)) = e^+(N^+(u), \overline{N^+(u)}) - e^+(N^+(u), \overline{N(u)})
    = \frac{m(n-m)}{2} \pm 3cnm
    = n^2/8 \pm 2cn^2. 
  \end{equation*}
  Applying a similar argument when $d^-(u) \le d^+(u)$ proves the lemma.
\end{proof}

\begin{lemma}\label{p0}
  Suppose that $c > 0$ and $G = (V, E)$ is an oriented graph on $n$ vertices such that $\delta^0(G) \ge (1/2 - c)n$. 
  Then, for every edge $uv \in E$,
  \begin{equation*}
    d^{-,+}(uv) - d^{+,-}(uv) = \pm 4cn.
  \end{equation*}
  This further implies that, for every pair of disjoint subsets $A$ and $B$ of $V$,
  \begin{equation*}
    \sum_{uv \in E^+(A, B)} \cyc(u, v, V) \ge \frac{e^{+}(A, B)^2}{2|A|} - cn^3.   
  \end{equation*}
\end{lemma}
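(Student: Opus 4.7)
\textbf{Plan for Lemma~\ref{p0}.} For the first claim, I would classify each $w \in V \setminus \{u, v\}$ by the orientations of $uw$ and $vw$ (allowing either to be absent). Letting $p, q, r, s$ denote the four ``single-edge'' classes (e.g., $p = |\{w : u \to w \text{ and no edge between } v \text{ and } w\}|$), we obtain
\begin{align*}
 d^+(u) &= d^{+,+}(uv) + d^{+,-}(uv) + p + 1, & d^-(u) &= d^{-,+}(uv) + d^{-,-}(uv) + q,\\
 d^+(v) &= d^{+,+}(uv) + d^{-,+}(uv) + r, & d^-(v) &= d^{+,-}(uv) + d^{-,-}(uv) + s + 1.
\end{align*}
The combination $(d^+(u) - d^+(v)) + (d^-(v) - d^-(u))$ then equals $2\bigl(d^{+,-}(uv) - d^{-,+}(uv)\bigr) + (p - r) + (s - q) + 2$. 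The semidegree hypothesis yields $|d^\pm(u) - d^\pm(v)| \le 2cn$, and since each vertex has at most $2cn$ non-neighbors, each of $p, q, r, s$ is at most $2cn$. Combining these bounds gives $|d^{+,-}(uv) - d^{-,+}(uv)| \le 4cn$.

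For the second claim, the key observation is that whenever $u \to v$, $\cyc(u, v, V) = d^{-,+}(uv) = |N^-(u) \cap N^+(v)|$. Fix $u \in A$ and set $P = N^+(u) \cap B$ and $Q = N^-(u)$; as $G$ is oriented, $P$ and $Q$ are disjoint, and a direct unpacking gives
\begin{equation*}
 \sum_{v \in N^+(u, B)} \cyc(u, v, V) \;=\; e^+(P, Q).
\end{equation*}
I would bound $e^+(P, Q) = e^+(P, V \setminus P) - e^+(P, V \setminus (P \cup Q))$ from below by applying Lemma~\ref{in=out} to the first term (giving at least $|P|(n - |P|)/2 - cn|P|$) and the trivial upper bound $|P|(n - |P| - |Q|)$ to the second. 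Substituting $|Q| = d^-(u) \ge (1/2 - c)n$ and simplifying collapses the algebra to
\begin{equation*}
 \sum_{v \in N^+(u, B)} \cyc(u, v, V) \;\ge\; \frac{d^+(u, B)^2}{2} - 2cn \cdot d^+(u, B).
\end{equation*}

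To finish, sum over $u \in A$ and apply Cauchy-Schwarz in the form $\sum_{u \in A} d^+(u, B)^2 \ge e^+(A, B)^2 / |A|$ to obtain the main term $e^+(A, B)^2/(2|A|)$. The error term $2cn \cdot e^+(A, B)$ is bounded by $2cn \cdot |A||B| \le cn^3/2$ using disjointness of $A$ and $B$, and this is comfortably absorbed into the $-cn^3$ slack. The proof is essentially bookkeeping; the one mildly interesting move is recognizing the correct reformulation $e^+(P, Q)$ and noticing that comparing to $e^+(P, V \setminus P)$ loses very little because $Q$ fills up nearly all of $V \setminus P$ (thanks to the semidegree hypothesis on $d^-(u)$). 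Note that part~1 turns out not to be needed for part~2 in this approach.
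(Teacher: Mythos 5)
Your proof of the first inequality is the same counting as the paper's, spelled out with more bookkeeping: the paper writes $d^{+,-}(uv) = d^+(u) - d^{+,+}(uv) - |N^+(u)\setminus N(v)|$ and the analogous identity for $d^{-,+}(uv)$, then subtracts; your $p,q,r,s$ classification is exactly this decomposition made explicit. One small caution: with the bounds exactly as you stated them ($p,q,r,s \le 2cn$ and $|d^{\pm}(u)-d^{\pm}(v)| \le 2cn$) the extra $+2$ in your identity yields only $|d^{+,-}(uv) - d^{-,+}(uv)| \le 4cn + 1$; this is repaired by noting the sharper facts that each vertex has at most $2cn - 1$ non-neighbors and that $|d^{\pm}(u)-d^{\pm}(v)| \le 2cn - 1$, which absorb the $+2$.

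The second inequality is where you genuinely diverge from the paper. The paper works with $d^{+,-}$: it observes $d^{+,-}(uv) \ge d^-(v, N^+(u,B))$, sums this to get $e(N^+(u,B)) \gtrsim d^+(u,B)^2/2$ from the degree condition, and then invokes the first part of the lemma to trade $d^{+,-}$ for $d^{-,+} = \cyc(u,v,V)$. You instead bound $d^{-,+}$ directly: identifying $\sum_{v\in N^+(u,B)} d^{-,+}(uv) = e^+(P,Q)$ for $P = N^+(u,B)$, $Q = N^-(u)$, and showing $e^+(P,Q)$ captures nearly all of $e^+(P, \overline{P})$ because the semidegree hypothesis forces $Q$ to fill nearly all of $V\setminus P$, with $e^+(P,\overline{P})$ controlled by Lemma~\ref{in=out}. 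This is a clean alternative, and as you note it makes the second inequality logically independent of the first (whereas the paper's proof is not), though the first is still needed elsewhere in the paper, for instance in the proof of Lemma~\ref{main-not-closed-2}. Both routes then finish with the same convexity/Cauchy--Schwarz step and a crude bound on $e^+(A,B)$ to absorb the error; your argument is correct.
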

\begin{proof}
  To prove the first part of the lemma, note that 
  \begin{equation*}
      d^{+,-}(xy)  = d^+(x) - d^{+,+}(xy) - |N^+(x) \setminus N(y)|, 
  \end{equation*}
  and similarly $d^{-,+}(xy) = d^+(y) - d^{+,+}(xy) - |N^+(y) \setminus N(x)|$.
  Therefore, 
  \begin{equation}
    d^{-,+}(xy) - d^{+,-}(xy) = (d^+(x) - d^+(y)) +  (|N^+(y) \setminus N(x)| - |N^+(x) \setminus N(y)|) = \pm 4cn,
  \end{equation}
  by the minimum semidegree condition.

  To prove the second part of the lemma, note that if $u \in A$ and $v \in N^+(u, B)$, then
  \begin{equation*}
    d^{+,-}(uv) \ge d^-(v, N^+(u, B)).
  \end{equation*}
  This implies that 
  \begin{equation*}
    \sum_{v \in N^+(u, B)} d^{+,-}(uv) \ge  \sum_{v \in N^+(u, B)} d^-(v, N^+(u, B))
    = e(N^+(u, B)) \ge d^+(u, B)\left(\frac{d^+(u, B) - 2cn}{2}\right),
  \end{equation*}
  where the last inequality follows from the fact that $n - 2\delta^0(G) \le 2cn$.
  This observation, with the first part of the lemma gives us that 
  \begin{equation*}
    \sum_{v \in N^+(u, B)} \cyc(u,v,V) = 
    \sum_{v \in N^+(u, B)} d^{-,+}(uv) \ge
    \sum_{v \in N^+(u, B)} \left( d^{+,-}(uv) - 4cn \right) 
    \ge d^+(u, B)\left(\frac{d^+(u, B)}{2} - 5cn\right).
  \end{equation*}
  Letting $m = e^+(A, B)$ and $|A| = a$, we have that, by the convexity of $f(x) = x^2$, 
  \begin{equation*}
    \sum_{uv \in E^+(A, B)} \cyc(u,v,V) \ge 
    \sum_{u \in A} d^+(u, B)\left(\frac{d^+(u, B)}{2} - 5cn\right) \ge
    \frac{a}{2}\left(\frac{m}{a}\right)^2 - m \cdot 5cn \ge \frac{m^2}{2a} - cn^3,
  \end{equation*}
  where the last inequality follows because, by Lemma~\ref{in=out}, $m \le n^2/5$.
\end{proof}

\begin{lemma}
  \label{l1}
  Suppose $0 < 1/n \ll c \ll \alpha \ll 1$ 
  and that $G = (V, E)$ is an oriented graph on $n$ vertices such that $\delta^0(G) \ge (1/2 - c)n$.
  Then, for every $v \in V$, 
  \begin{equation*}
    \left|\tilde{N}_{H(G)}(\alpha, 1, v)\right| \ge \left(\frac{1}{8} - 10\alpha\right)n.
  \end{equation*}
\end{lemma}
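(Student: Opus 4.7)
The plan is a double count of the quantity $L(v, u)$ defined as the number of unordered pairs $\{x, y\}$ for which both $\{v, x, y\}$ and $\{u, x, y\}$ induce cyclic triangles. By definition $u \in \tilde{N}_{H(G)}(\alpha, 1, v)$ precisely when $L(v, u) \ge \alpha n^2$, so the strategy is to lower bound $\sum_{u \ne v} L(v, u)$ and then apply an averaging argument.

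The key observation is that cyclic triangles through $v$ are in bijection with the directed edges in $E^+(N^+(v), N^-(v))$: a cyclic triangle $\{v, a, b\}$ contains a unique arc $a \to b$ with $a \in N^+(v)$ and $b \in N^-(v)$, and conversely every such arc yields the cyclic triangle $v \to a \to b \to v$. Given such an arc $a \to b$, the vertices $u$ for which $\{u, a, b\}$ is also cyclic are precisely the elements of $N^-(a) \cap N^+(b)$, a set of size $\cyc(a, b, V)$ which contains $v$ itself. Therefore,
\begin{equation*}
  \sum_{u \ne v} L(v, u) \;=\; \sum_{ab \in E^+(N^+(v), N^-(v))} \bigl(\cyc(a, b, V) - 1\bigr).
\end{equation*}

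To lower bound the right-hand side I would invoke the second part of Lemma~\ref{p0} with $A = N^+(v)$ and $B = N^-(v)$. The bijection above gives $e^+(N^+(v), N^-(v)) = \cyc(v, V, V) \ge n^2/8 - 2cn^2$ by Lemma~\ref{min_vertex_degree}, and the semidegree condition yields $|N^+(v)| \le (1/2 + c)n$, so
\begin{equation*}
  \sum_{u \ne v} L(v, u) \;\ge\; \frac{(n^2/8 - 2cn^2)^2}{2(1/2 + c)n} - cn^3 - O(n^2) \;\ge\; \frac{n^3}{64} - O(c)n^3.
\end{equation*}

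On the other hand, every pair counted by $L(v, u)$ is a cyclic triangle through $v$, so $L(v, u) \le \cyc(v, V, V) \le n^2/8 + 2cn^2$ for every $u$, while $L(v, u) < \alpha n^2$ whenever $u \notin \tilde{N} := \tilde{N}_{H(G)}(\alpha, 1, v)$. Splitting the sum according to whether $u \in \tilde N$ gives
\begin{equation*}
  \frac{n^3}{64} - O(c)n^3 \;\le\; |\tilde{N}|\Bigl(\frac{n^2}{8} + 2cn^2\Bigr) + \alpha n^3,
\end{equation*}
which, after solving for $|\tilde{N}|$ and using $c \ll \alpha$, yields $|\tilde{N}| \ge (1/8 - 10\alpha)n$. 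The main subtlety is that the convexity bound of Lemma~\ref{p0} applied with $|A| \approx n/2$ and $e^+(A, B) \approx n^2/8$ produces exactly the constant $1/64$, which is precisely what the final quotient $(n^3/64)/(n^2/8) = n/8$ requires; there is essentially no slack outside of lower-order error terms absorbed by the hierarchy $c \ll \alpha$.
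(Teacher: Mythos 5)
Your proposal is correct and follows essentially the same route as the paper: both count the ordered/unordered pairs $\{x,y\}$ with $v,x,y$ and $u,x,y$ cyclic (your $L(v,u)$ is precisely the paper's $|T(\{u\})|$), lower bound the total $\sum_u L(v,u)$ via Lemma~\ref{p0} applied to $A=N^+(v)$, $B=N^-(v)$ together with the bound $e^+(N^+(v),N^-(v))=\cyc(v,V,V)\ge(1/8-2c)n^2$ from Lemma~\ref{min_vertex_degree}, and finish by the same averaging argument using the per-vertex ceiling $(1/8+2c)n^2$ and the threshold $\alpha n^2$. The bookkeeping differences (ordered triples $(x,y,u)$ versus unordered pairs, inclusion of $u=v$) are cosmetic and absorbed into lower-order terms.
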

\begin{proof}
  Fix $v \in V$ and let $N = \tilde{N}_{H(G)}(\alpha, 1, v)$ and $\overline{N} = V \setminus N$.
  For every $U \subseteq V$, let  $T(U)$ be the set of ordered triples
  $(x,y,u)$ such that $u \in U$, $xy \in E$ and both $vxy$ and $xyu$ are cyclic triangles.
  By definition, $u \in \overline{N}$ if and only if $|T(\{u\})| < \alpha n^2$, so 
  \begin{equation*}\label{TNsize}
    |T(N)| = |T(V)| - |T(\overline{N})| \ge |T(V)| - \alpha n^3.
  \end{equation*}
  For every $u \in V$, Lemma~\ref{min_vertex_degree} implies that $|T(\{u\})| \le (1/8 + 2c) n^2$, so 
  \begin{equation*}
    (1/8 + 2c) n^2 \cdot |N| \ge |T(N)| \ge |T(V)| - \alpha n^3
  \end{equation*}
  Therefore, to show that $|N| \ge (1/8 - 10\alpha)n$ and complete the proof, it suffices to 
  prove that $|T(V)| \ge (1/64 - 2c)n^3$.

  Let $m = e^+(N^+(v), N^-(v))$.
  By Lemma~\ref{p0}, we have that
  \begin{equation}\label{TVsize}
    |T(V)| = \sum_{xy \in E^+(N^+(v), N^-(v))} \cyc(x,y,V) \ge \frac{m^2}{2 d^+(v)} - cn^3.
  \end{equation}
  This completes the proof, because Lemma~\ref{min_vertex_degree} implies that $m \ge (1/8 - 2c)n^2$, so
  \begin{equation*}
    \frac{m^2}{2d^+(v)} \ge \frac{mn}{2} \cdot \frac{1/8 - 2c}{1/2 + c} 
    \ge (1/16 - c)n^3 \cdot (1/4 - 5c) \ge (1/64 - c)n^3. \qedhere
  \end{equation*}
\end{proof}

\begin{proof}[Proof of Lemma~\ref{lem:partition-existence}]
  Let $\alpha = 1/1000$, $\delta = 2/9$ and $\delta' = 1/8 - 1/100$.
  By Lemmas~\ref{min_vertex_degree} and \ref{l1}, we have that $\delta_1(H(G)) \ge (1/8 - 2c)n^2 > \delta \binom{n-1}{2}$ and 
  that $\left|\tilde{N}_{H(G)}(\alpha, 1, v)\right| \ge (1/8 - 10 \alpha)n = \delta' n$ for every $v \in V(G)$.
  Therefore, by Lemma~\ref{lem:partition}, there exists $\mathcal{P} = \{V_1, \dotsc, V_d\}$
  a $(H(G), \beta, 8)$-closed partition of $V(G)$ such that 
  such that $d \le \min \{ \floor{1/\delta}, \floor{1/\delta'} \} = 4$
  and $|V_i| \ge (\delta' - \alpha)n > n/9$ for every $i \in [d]$.
\end{proof}

\subsection{Proof of Lemma~\ref{lem:not-closed-1}}\label{sec:not-closed-1}

In this section, we prove Lemma~\ref{lem:not-closed-1}.
In an effort to explain the structure of the proof at a high level, 
we first informally discuss how to derive a contradiction in 
the following situation.
Suppose that $G$ is a regular tournament on $n$ vertices
that has an $\eta$-partition $\mathcal{P}$ of $V(G)$ with
three distinct parts $A, B, D \in \mathcal{P}$ such that 
\begin{itemize}
  \item $A$ is the largest part in $\mathcal{P}$,
  \item $G[A], G[B]$ are transitive tournaments,
  \item every cyclic triangle with two vertices in $A$ has
    one vertex in $B$, and 
  \item every cyclic triangle with two vertices in $B$ has one vertex in $D$.
\end{itemize}
These conditions are
an idealized version of the conditions we will
use to produce a contradiction to prove Lemma~\ref{lem:not-closed-1}.

Let $C = \overline{A \cup B}$.
Since $G[A]$ is a transitive tournament,
there exists $x^+,x^- \in A$ such that 
$d^+(x^+, A) = d^-(x^-, A) = |A| - 1$.
This implies that when $C^- = N^-(x^+, C)$ we have that 
\begin{equation*}
  |C^-| \ge \delta^0(G) - |B| = (n-1)/2 - (|A| + |B|)/2  + (|A| - |B|)/2
= |C|/2 + (|A| - |B|)/2  - 1/2,
\end{equation*}
and a similar lower bound holds for the cardinality of $C^+ = N^+(x^-, C)$.
Let $v \in C^-$ and note that, because no cyclic triangle
contains both $v$ and $x^+$ and has its third vertex in $A$, 
we have that $N^+(v) = A$.
Similarly $N^-(v) = A$ for every $v \in C^+$. 
Therefore, $C^+$ and $C^-$ are disjoint,
so, with the fact that $|A| \ge |B|$, we 
have that $|C|/2 \ge |C^+|, |C^-| \ge |C|/2 - 1/2$ and $|B| \ge |A| - 1$. 

Since $|B|$ is close to $|A|$ and $A$ was the largest part in $\mathcal{P}$,
we can apply similar logic to $G[B]$ to find
$y^+, y^- \in B$ such that $d^+(y^+, B) = d^-(y^-, B) = |B| - 1$
and the sets $N^-(y^+, \overline{B \cup D})$
and $N^+(y^-, \overline{B \cup D})$ almost 
partition $\overline{B \cup D}$ and have roughly the same cardinality.
Let
$A^-= N^-(y^+, A)$, $A^+= N^+(y^-, A)$ and assume that $|A^-|\ge |A^+|$.
(Similar logic gives a contradiction when $|A^+|\ge |A^-|$.)
Since $A \subseteq \overline{B \cup D}$, we have $A^- =A\cap N^-(y^+, \overline{B \cup D})$ and $A^+=A\cap N^+(y^-, \overline{B \cup D})$. Therefore, the sets $A^-$
and $A^+$ almost partition $A$ and then
$|A^-|$ cannot be much smaller than $|A|/2$. Because $G[A^-]$
is a transitive tournament, there exists $x \in A^-$ such that
$d^+(x, A^-) = |A^-| - 1$.
Now consider the out-neighborhood of $x$. 
Recall that $N^-(v) = A$ for every $v \in C^+$, so $N^+(x)$ contains $(A^- - x) \cup C^+$.
Moreover, since $xy^+$ is an edge and there are no triangles with
one vertex in $A$ and two vertices in $B$, the out-neighborhood of $x$ also contains all of $B$.
Note that $|A^- \cup B \cup C^+|$ is roughly $n/2 + |B|/2$,
so, since $|B| \ge \eta n$,
we have contradicted the fact that $G$ is a regular tournament.

\begin{lemma}\label{strong1}
  Suppose that $0 < 1/n \ll \alpha,c \ll \xi, \beta \ll 1$
  and that $G$ is an oriented graph on $n$ vertices such that $\delta^0(G) \ge (1/2 - c)n$.
  If $A, B \subseteq V(G)$ are disjoint and $e^+(A, B) \le \alpha n^2$, then 
  $|\strong{A}{-}{\beta} \cap B| \ge |B| - \xi n$
  and 
  $|\strong{B}{+}{\beta} \cap A| \ge |A| - \xi n$
\end{lemma}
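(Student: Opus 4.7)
The plan is to exploit two features of the hypothesis: first, the minimum semidegree condition forces every vertex to have very few non-neighbors, so the only obstruction to $v\in B$ being strongly $\beta$-linked to $A$ is having too many \emph{in-neighbors} in $A$; and second, the global budget $e^+(A,B)\le \alpha n^2$ limits how many vertices can violate this via a Markov-type averaging argument.

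First I will note that since $d^+(v)+d^-(v)\ge (1-2c)n$ for every $v\in V(G)$, the number of vertices in $V(G)\setminus(\{v\}\cup N(v))$ is at most $2cn-1$. In particular, for every $v\in B$ we have $|A\setminus N(v)|\le 2cn$, and for every $u\in A$ we have $|B\setminus N(u)|\le 2cn$. Since $c\ll \beta$, this gives $2cn\le \beta n/2$.

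Next I will use the identity $d^+(v,A)=|A|-d^-(v,A)-|A\setminus N(v)|$ for $v\in B$. Combined with the previous step,
\begin{equation*}
  d^+(v,A)\ge |A|-d^-(v,A)-2cn,
\end{equation*}
so any $v\in B$ with $d^-(v,A)\le \beta n-2cn$ automatically lies in $\strong{A}{-}{\beta}$. By Markov's inequality applied to $\sum_{v\in B} d^-(v,A)=e^+(A,B)\le \alpha n^2$,
\begin{equation*}
  \bigl|\{v\in B:d^-(v,A)>\beta n-2cn\}\bigr|\le \frac{\alpha n^2}{\beta n-2cn}\le \xi n,
\end{equation*}
where the last inequality uses $\alpha, c\ll \xi,\beta$. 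This gives the first conclusion. The second conclusion is completely symmetric: writing $d^-(u,B)=|B|-d^+(u,B)-|B\setminus N(u)|$ for $u\in A$ and applying Markov to $\sum_{u\in A} d^+(u,B)=e^+(A,B)\le \alpha n^2$ yields $|\strong{B}{+}{\beta}\cap A|\ge |A|-\xi n$.

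There is no serious obstacle here; the entire proof is a two-line Markov argument, and the only thing to be careful about is separating the two possible reasons a vertex could fail to be strong—having few common neighbors, versus having many reverse-direction neighbors—and handling the first via the semidegree hypothesis and the second via the hypothesis $e^+(A,B)\le \alpha n^2$.
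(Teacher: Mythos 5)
Your proof is correct and is essentially the same as the paper's: both arguments observe that a vertex failing the strongness condition must have reverse-degree at least roughly $\beta n$ (after subtracting the $O(cn)$ non-neighbors), and then apply a Markov/counting bound against $e^+(A,B)\le \alpha n^2$. The only cosmetic difference is that you state the averaging step explicitly as Markov's inequality, whereas the paper phrases the same bound as $m\cdot \beta n/2 \le e^+(A,B)$.
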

\begin{proof}
  Note that for every $v \in B \setminus \strong{A}{-}{\beta}$, 
  we have that $d^-(v, A) \ge \beta n - (n - 2 \delta^0(G)) \ge \beta n/2$, 
  and also, for every $v \in A \setminus \strong{B}{+}{\beta}$,
  we have that $d^+(v, B) \ge \beta n/2$.
  Therefore, if we let $m$ be the maximum of 
  $|B \setminus \strong{A}{-}{\beta}|$ and $|A \setminus \strong{B}{+}{\beta}|$,
  then $m \cdot \beta n/2 \le e^+(A, B) \le \alpha n^2$,
  which implies that $m \le \xi n$.
\end{proof}

\begin{lemma}\label{degree}
  Suppose that $0 < 1/n \ll c, \alpha \ll \beta < 1$,
  and that $G = (V,E)$ is an $n$-vertex oriented graph such that $\delta^0(G) \ge (1/2 - c)n$.
  If $A \subseteq V$ and $\cyc(A) \le \alpha n^3$, 
  then, for $\sigma \in \{+, -\}$, there exists $x^\sigma \in A$ such that $d^\sigma(x^\sigma, A) \ge |A| - \beta n$.
\end{lemma}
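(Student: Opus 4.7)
By symmetry (reverse every edge for $\sigma=-$) it suffices to treat $\sigma=+$. Write $a:=|A|$. If $a\le\beta n$ then $d^+(x,A)\ge 0\ge a-\beta n$ for any $x\in A$, so we may assume $a>\beta n$ and, for contradiction, that every $v\in A$ satisfies $d^+(v,A)\le M:=a-\beta n-1$. The plan is to deduce $\cyc(A)>\alpha n^3$.

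The two routine steps come first. Since $\delta^0(G)\ge(1/2-c)n$, each $v\in A$ has at most $2cn$ non-neighbours in $V$, so $G[A]$ contains at most $acn$ non-adjacent pairs; hence at most $a^2cn$ of the $\binom{a}{3}$ triples of $A$ contain a non-edge, and the remaining triples are all cyclic or transitive. Thus $\trn(A)\ge\binom{a}{3}-\cyc(A)-a^2cn$. Counting each transitive triangle uniquely at its source then yields
\[
\sum_{v\in A}\binom{d^+(v,A)}{2}\;\ge\;\trn(A)\;\ge\;\binom{a}{3}-\alpha n^3-a^2cn,\qquad(\star)
\]
and the entire proof reduces to contradicting $(\star)$ using the cap $d^+(v,A)\le M$.

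The easy convex bound $\binom{d}{2}\le dM/2$ combined with $\sum_v d^+(v,A)=e(A)\le\binom{a}{2}$ gives $\sum_v\binom{d^+(v,A)}{2}\le M\binom{a}{2}/2$, and a short calculation contradicts $(\star)$ in the range $a\le 3\beta n-\Theta(cn+\alpha n/\beta^2)$. The main obstacle is the complementary regime $a\gtrsim 3\beta n$, where this linear bound is too weak. Here I would use the sharp extremal bound for tournament-like score sequences with a cap: among integer sequences $0\le d_1\le\cdots\le d_a\le M$ summing to $\binom{a}{2}\pm O(acn)$ and satisfying the Fulkerson prefix inequalities $\sum_{j\le k}d_j\ge\binom{k}{2}$, the sum $\sum\binom{d_i}{2}$ is maximised (up to $O(a^2cn)$ error) by the ``near-transitive with cap'' sequence $(0,1,2,\dots,a-2\beta n-2,M,M,\dots,M)$ in which the top $2\beta n+1$ entries equal $M$. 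Writing $\delta_i:=(i-1)-d_i$, the identity
\[
\binom{a}{3}-\sum_i\binom{d_i}{2}=\sum_i(i-1)\delta_i-\tfrac12\sum_i\delta_i^2+O(acn)
\]
evaluated on this extremal sequence gives $\beta n(\beta n+1)(2\beta n+1)/6\ge(\beta n)^3/3$; a standard convexity/exchange argument on the concave objective over the Fulkerson polytope then promotes this to a lower bound for every admissible sequence. It follows that $\cyc(A)\ge(\beta n)^3/3-O(a^2cn)$, contradicting $\cyc(A)\le\alpha n^3$ once we take $\alpha<\beta^3/6$, which is permitted because $\alpha\ll\beta$. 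The main technical difficulty lies entirely in justifying this extremal claim via the convexity/exchange argument.
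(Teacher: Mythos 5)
Your route is genuinely different from the paper's and, while I believe it can be made to work, it is not complete as written: the load-bearing step is exactly the one you flag as ``the main technical difficulty,'' namely the extremal claim that among feasible score sequences with the cap $d_i\le M$ the quantity $\sum_i\binom{d_i}{2}$ is maximised by $(0,1,\dots,a-2\beta n-2,M,\dots,M)$. You assert this follows from ``a standard convexity/exchange argument'' without giving it. The claim does appear to be true -- a majorisation argument works: for any feasible sequence, $\sum_{j>k}d_j\le\min\bigl(\binom{a}{2}-\binom{k}{2},\,(a-k)M\bigr)$ for every $k$ (the first bound from the prefix constraints, the second from the cap), and one checks the candidate extremal sequence meets both bounds with equality for all $k$, so it majorises every feasible sequence, and $\sum_i\binom{d_i}{2}$ is Schur-convex. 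But you did not supply this, and there is a second loose end you gloss over: $G[A]$ is an oriented graph, not a tournament, so the Landau/Fulkerson prefix inequalities $\sum_{j\le k}d_j\ge\binom{k}{2}$ that you invoke do not literally hold; they hold only with an $O(ckn)$ defect (coming from $e(S)\ge\binom{k}{2}-2ckn$ for the bottom $k$ vertices $S$), and that error must be threaded through the majorisation argument. Neither issue is fatal, but together they leave the hard part of the proof unwritten.

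For contrast, the paper proves the lemma by a short structural argument that avoids degree sequences entirely. Let $X\subseteq A$ be the vertices $x$ with $\cyc(x,A,A)\le\alpha^{1/2}n^2$; double counting gives $|X|\ge|A|-3\alpha^{1/2}n$. Pick $x^+\in X$ maximising $d^+(x^+,X)$ and set $X^\pm=N^\pm(x^+,X)$. Then $e^+(X^+,X^-)\le\cyc(x^+,A,A)\le\alpha^{1/2}n^2$ is tiny, so by Lemma~\ref{strong1} almost every vertex of $X^-$ has almost all of $X^+$ in its out-neighbourhood; choosing such a $y\in X^-$ that additionally has $d^+(y,X^-)\gtrsim|X^-|/2$ gives $d^+(y,X)\gtrsim|X^+|+|X^-|/2$, which by the maximality of $x^+$ forces $|X^-|$ to be $O(\xi n)$, i.e.\ $d^+(x^+,A)\ge|X^+|\ge|A|-\beta n$. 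This is entirely local and elementary; I'd recommend adopting it, or, if you want to pursue your global counting route, you must write out the majorisation argument and carry the $O(acn)$ defect in the prefix inequalities through it.
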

\begin{proof}
  Pick $\xi$ so that 
  \begin{equation*}
    0 < 1/n \ll c, \alpha \ll \xi \ll \beta \ll 1.
  \end{equation*}
  Let $X$ be the set of vertices $x$ in $A$ such that $\cyc(x, A, A) \le \alpha^{1/2} n^2$.
  Since 
  \begin{equation*}
    (|A| - |X|) \cdot \alpha^{1/2} n^2 \le 3\cyc(A) \le 3\alpha n^3
  \end{equation*}
  we have that 
  \begin{equation}\label{eq:sizeofX}
    |X| \ge |A| -  3\alpha^{1/2} n.
  \end{equation}
  Pick $x^+ \in X$ so as to maximize $d^+(x^+, X)$, and
  let $X^+ = N^+(x^+, X)$ and $X^- = N^-(x^+, X)$.
  Note that, because of the minimum semidegree condition, $|X^-| \ge |X| - |X^+| - 2cn$,
  and that $\alpha^{1/2} n^2 \ge \cyc(x^+, A, A) \ge e^+(X^+, X^-)$.
  Therefore, with Lemma~\ref{strong1}, we have that
  \begin{equation*}
    |\strong{X^+}{-}{\xi} \cap X^-| \ge |X^-| - \xi n \ge |X| - |X^+| - 2 c n - \xi n,
  \end{equation*}
  and, with the minimum semidegree condition, there exists 
  $y \in \strong{X^+}{-}{\xi} \cap X^-$ such that
  \begin{equation*} 
    d^+(y, X^-) \ge (|\strong{X^+}{-}{\xi} \cap X^-| - 2c n)/2 \ge (|X| - |X^+| - 2 \xi n)/2
  \end{equation*}
  Because $y \in \strong{X^+}{-}{\xi}$, we have that
  \begin{equation*}
    d^+(y, X^+) \ge |X^+| - \xi n.
  \end{equation*}
  Therefore, by the selection of $x^+$ we have that,
  \begin{equation*}
    |X^+| \ge d^+(y, X^+) + d^+(y, X^-) 
    \ge (|X^+| - \xi n)  + (|X| - |X^+| - 2 \xi n)/2
    = (|X| + |X^+| - 4 \xi n)/2 
  \end{equation*}
  which, with \eqref{eq:sizeofX}, that implies $d^+(x^+, A) = |X^+| \ge |X| - 4 \xi n \ge |A| - \beta n$.
  By a similar argument, we can find $x^- \in A$, such that $d^-(x^-, A) \ge |A| - \beta n$.
\end{proof}

\begin{lemma}\label{lem:Csigma}
  Let $0 < 1/n \ll \alpha, c \ll \xi \ll \eta < 1$.
  Let $G = (V,E)$ be an $n$-vertex oriented graph such that $\delta^0(G) \ge (1/2 - c)n$.
  For every partition $\{A, B, C\}$ of $V$ such that 
  $|A| \ge \eta n$ and $\cyc(A), \cyc(A,A,C) \le \alpha n^3$
  there exist disjoint subsets $C^+$ and $C^-$ of $C$ such that, for $\sigma \in \{+, -\}$,
  \begin{equation*}
    e^{-\sigma}(A, C^\sigma) \le \xi n^2
    \qquad\text{and}\qquad
    |C^\sigma| \ge |C|/2 + (|A| - |B|)/2 - \xi n.
  \end{equation*}
\end{lemma}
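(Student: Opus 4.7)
The plan is to follow the informal sketch at the start of Section~\ref{sec:not-closed-1}: we locate a near-source $x^-$ and a near-sink $x^+$ of $G[A]$, and define $C^-$ and $C^+$ as trimmed versions of $N^-(x^+, C)$ and $N^+(x^-, C)$. The twist beyond the tournament sketch is that $x^\pm$ must \emph{also} have few cyclic triangles involving a $C$-vertex. First introduce constants with $0 < 1/n \ll c, \alpha \ll \beta \ll \xi$ and let $A' = \{a \in A : \cyc(a, A, C) \le \alpha^{1/2} n^2\}$. Because $\sum_{a \in A} \cyc(a, A, C) \le 2\cyc(A, A, C) \le 2\alpha n^3$, Markov's inequality gives $|A'| \ge |A| - 2\alpha^{1/2} n \ge \eta n/2$. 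Applying Lemma~\ref{degree} to $A'$, using $\cyc(A') \le \cyc(A) \le \alpha n^3$, would produce $x^+, x^- \in A'$ with $d^+(x^+, A), d^-(x^-, A) \ge |A'| - \beta n \ge |A| - \xi n/4$.

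The minimum semidegree condition together with the identity $n/2 - |B| = |C|/2 + (|A|-|B|)/2$ then yields
\[
  d^-(x^+, C) \ge (1/2 - c)n - d^-(x^+, A) - |B| \ge |C|/2 + (|A|-|B|)/2 - \xi n/2,
\]
and symmetrically for $d^+(x^-, C)$. Define $C^- = N^-(x^+, C) \setminus N^+(x^-, C)$ and $C^+ = N^+(x^-, C) \setminus N^-(x^+, C)$, which are disjoint by construction.

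To lower-bound $|C^\sigma|$, we show the intersection $S := N^-(x^+, C) \cap N^+(x^-, C)$ is small. For $v \in S$, every $y \in N^+(x^+, A)$ with $y \to v$ yields a cyclic triangle $\{x^+, y, v\}$, so $d^-(v, N^+(x^+, A)) \le \cyc(v, x^+, A)$ and hence $d^-(v, A) \le \cyc(v, x^+, A) + \xi n/4$; symmetrically $d^+(v, A) \le \cyc(v, x^-, A) + \xi n/4$. Since $d^+(v, A) + d^-(v, A) \ge |A| - 2cn$, adding gives $\cyc(v, x^+, A) + \cyc(v, x^-, A) \ge \eta n/2$. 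Summing over $v \in S$ and using $\sum_{v \in S} \cyc(v, x^\sigma, A) \le \cyc(x^\sigma, A, C) \le \alpha^{1/2} n^2$ produces $|S| \le 4 \alpha^{1/2} n/\eta \le \xi n/4$, so $|C^\sigma| \ge |C|/2 + (|A|-|B|)/2 - \xi n$.

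The same counting handles the edge bound: for $v \in C^- \subseteq N^-(x^+, C)$ the argument above gives $d^-(v, A) \le \cyc(v, x^+, A) + \xi n/4$, so
\[
  e^+(A, C^-) \le \sum_{v \in C^-} d^-(v, A) \le \cyc(x^+, A, C) + (\xi n/4) |C^-| \le \alpha^{1/2} n^2 + \xi n^2/4 \le \xi n^2,
\]
with $e^-(A, C^+) \le \xi n^2$ following symmetrically. The main obstacle is pinning down $x^\pm$ that are simultaneously near-source/sink in $A$ \emph{and} incident to few cyclic triangles through a $C$-vertex; this is precisely why we pass to the refined set $A'$ before invoking Lemma~\ref{degree}, and the bound on $\cyc(x^\pm, A, C)$ that $A'$ supplies is the single ingredient that then controls both $|S|$ and the edge counts $e^+(A, C^-)$, $e^-(A, C^+)$.
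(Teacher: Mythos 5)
Your proof is correct and follows essentially the same line as the paper's: pass to the subset $A'$ (the paper calls it $X$) of $A$-vertices with few cyclic triangles through $A$ and $C$, invoke Lemma~\ref{degree} to extract a near-source and a near-sink, and take out-/in-neighborhoods in $C$. The one real deviation is in how disjointness of $C^+$ and $C^-$ is arranged. The paper observes that if one constructs the two (possibly overlapping) sets with error $\xi/2$, the overlap is automatically tiny: every $v \in C^+ \cap C^-$ satisfies $d(v,A) \ge |A| - 2cn$, so $(|A| - 2cn)|C^+ \cap C^-| \le e^+(A, C^+ \cap C^-) + e^-(A, C^+ \cap C^-) \le \xi n^2$, and $|A| \ge \eta n$ gives $|C^+ \cap C^-| \le \xi n /2$ directly. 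This is shorter and cleaner than your bound on $S := N^-(x^+,C) \cap N^+(x^-,C)$, which additionally re-runs the cyclic-triangle counting for both $x^+$ and $x^-$. Your version is still valid, but note you pay for it: you are forced to produce $x^+$ and $x^-$ from $A'$ \emph{before} the overlap argument can begin, whereas in the paper the reduction to the overlapping case is a front-loaded two-line observation, after which the rest of the proof only needs one of the two special vertices at a time (the other case following ``by a similar argument''). Either way, the core ideas — Markov to get $A'$, Lemma~\ref{degree} on $A'$, the semidegree computation $d^\sigma(x^{-\sigma},C) \ge |C|/2 + (|A|-|B|)/2 - \xi n/2$, and the observation that edges from $C^\sigma$ into the dominant side of $x^{-\sigma}$ in $A$ correspond to triangles counted by $\cyc(x^{-\sigma}, A, C)$ — are exactly those of the paper.
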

\begin{proof}
  We will prove the lemma by showing that, with $\xi/2$ playing the role of $\xi$,
  there exist subsets $C^+$ and $C^-$ of $C$ that meet the conditions of the lemma
  except that $C^+ \cap C^-$ might not be empty.
  This will prove the lemma, because then, by the minimum semidegree condition,
  \begin{equation*}
    (|A| - 2 c n)|C^+ \cap C^-| \le 
    e^{+}(A, C^+ \cap C^-) + e^{-}(A, C^+ \cap C^-) \le \xi n^2,
  \end{equation*}
  and this, with the fact that $|A| \ge \eta n$, implies that $|C^+ \cap C^-| \le \xi n/2$,
  which means that the sets $C^+ \setminus C^-$ and $C^- \setminus C^+$ are disjoint sets
  that meet the conditions of the lemma.
  Furthermore, we will only prove that such a $C^+$ exists, 
  because the existence of the desired set $C^-$ follows by a similar argument.

  Let $X$ be the set of vertices $x$ in $A$ such that $\cyc(x, A, C) \le \alpha^{1/2} n^2$.
  Because 
  \begin{equation*}
    (|A| - |X|) \cdot \alpha^{1/2} n^2 \le 3\cyc(A, A, C) \le 3\alpha n^3 
  \end{equation*}
  we have that $|X| \ge |A| - 3\alpha^{1/2} n$.
  Since $\cyc(X, X, X) \le \cyc(A, A, A) \le \alpha n^3$, Lemma~\ref{degree} implies that there exists $x \in X$ such that
  \begin{equation*}
    d^-(x, A) \ge d^-(x, X) \ge |X| - \xi n/4 \ge |A| - \xi n/3.
  \end{equation*}
  Let $C^+ = N^+(x, C)$. Because every edge directed from $C^+$ to $N^-(x, A)$ corresponds to a triangle in $\cyc(x, A, C)$, 
  we have that 
  \begin{equation*}
    e^-(A, C^+) \le \cyc(x, A, C) + e^-(A \setminus N^-(x, A), C^+) \le \xi n^2/2.
  \end{equation*}
  We also have that 
  \begin{equation*}
  |C^+| \ge \delta^0(G) - d^+(x, A) - |B| \ge n/2 - |B| - cn - \xi n/3 \ge |C|/2 + (|A| - |B|)/2 - \xi n/2.
  \end{equation*}
  This completes the proof of the lemma.
\end{proof}

\begin{proof}[Proof of Lemma~\ref{lem:not-closed-1}]
  Pick $\beta$, $\alpha'$ and $\xi$ so that
  \begin{equation*}
    0 < 1/n \ll c \ll \mu \ll \alpha' \ll \alpha, \xi \ll \beta \ll \eta < 1.
  \end{equation*}
  For a contradiction, assume that 
  \begin{equation}\label{contradict}
    \cyc(V_i, V_i, \overline{V_i}) \ge \alpha n^3 \qquad
    \text{for all $V_i \in \mathcal{P} = \{V_1, \dotsc, V_d\}$}.
  \end{equation}
  Because $\mathcal{P}$ is an $\eta$-partition we have that $1/d \ge \eta$ 
  which implies that
  \begin{equation}\label{main_assumption}
    \forall J \subseteq [d], \forall i \in [d], \exists j' \in J \text{ such that }
    \cyc(V_i, V_i, V_{j'}) \ge \eta \cdot c(V_i, V_i, \bigcup_{j \in J} V_j).
  \end{equation}

  Let $A = V_x$ be the largest part in $\mathcal{P}$.
  By \eqref{contradict} and \eqref{main_assumption} with $J = [d] - x$, 
  there exists $V_y = B \in \mathcal{P} - A$ such that $\cyc(A, A, B) \ge \alpha' n^3$.
  Let $C = V \setminus (A \cup B)$.  
  By \eqref{contradict} and \eqref{main_assumption} with $J = [d] - y$, 
  we also have $V_z = D \in \mathcal{P} - B$ such that
  $c(B, B, D) \ge \alpha' n^3$.  
  Let $F = V \setminus (B \cup D)$.

  Note that 
  $\mathbf{v}_1 = 2\mathbf{u}_x + \mathbf{u}_y \in \uIvec(H(G))$ and
  $\mathbf{v}_2 = 2\mathbf{u}_y + \mathbf{u}_z \in \uIvec(H(G))$.
  If $A = D$, then $\mathbf{v}_1 - \mathbf{v}_2$ is a $2$-transferral in $\ulattice(H(G))$.
  Furthermore, if $\cyc(A) \ge \alpha' n^3$, then $\mathbf{v}_3 = 3\mathbf{u}_x \in \uIvec(H(G))$, 
  so $\mathbf{v}_1 - \mathbf{v}_3$ is a $2$-transferral in $\ulattice(H(G))$.
  Also, if $c(A, A, C) \ge \alpha' n^3$, then, by \eqref{main_assumption} with
  $J = [d] - x - y$, there exists $w \in [d] - x - y$ such that 
  $\mathbf{v}_4 = 2 \mathbf{u}_x + \mathbf{u}_w \in \uIvec(H(G))$, 
  so $\mathbf{v}_1 - \mathbf{v}_4$ is a $2$-transferral in $\ulattice(H(G))$.
  With additional similar arguments, the fact that there are no $2$-transferrals in $\ulattice(H(G))$
  implies that $A \neq D$ and 
  \begin{equation}\label{cc_main_assumption}
    \cyc(A), \cyc(A, A, C), \cyc(B),  \cyc(B, B, F) <  \alpha' n^3.
  \end{equation}
  By Lemma~\ref{lem:Csigma}, there exist disjoint subsets $C^+$, $C^-$ of $C$
  such that, for $\sigma \in \{-, +\}$,
  \begin{equation}\label{ACsigma}
    e^{-\sigma}(A, C^\sigma) \le \xi n^2,
  \end{equation}
  and 
  \begin{equation}\label{Csigmasize1}
    |C^\sigma| \ge |C|/2 + (|A| - |B|)/2 - \xi n.
  \end{equation}
  By the selection of $A$ we have that $|A| \ge |B|$, so \eqref{Csigmasize1} implies that 
  \begin{equation}\label{Csigmasize}
    |C^\sigma| \ge |C|/2 - \xi n.
  \end{equation}
  Because $C^+$ and $C^-$ are disjoint subsets of $C$, 
  we have that $\min\{|C^+|, |C^-|\} \le |C|/2$, so, with \eqref{Csigmasize1},
  \begin{equation}\label{sizeofB}
    |B| \ge |A| - 2 \xi n.
  \end{equation}

  By Lemma~\ref{lem:Csigma}, with $B$ and $F$ playing the roles of $A$ and $C$, respectively, 
  there are disjoint subsets $F^+$ and $F^-$ of $F$
  such that, for $\sigma \in \{-, +\}$,
  \begin{equation}\label{BFsigma}
    e^{-\sigma}(B, F^\sigma) \le \xi n^2.
  \end{equation}
  and 
  \begin{equation*}
    |F^\sigma| \ge |F|/2 + (|B| - |D|)/2 - \xi n.
  \end{equation*}
  By the selection of $A$ and \eqref{sizeofB}, we have that
  \begin{equation*}
    |B| + 2 \xi n \ge |A| \ge |D|,
  \end{equation*}
  so $|F^\sigma| \ge |F|/2 - 2 \xi n$, and
  \begin{equation}\label{Fsigmasize}
    |F^+| + |F^-| \ge |F| - 4 \xi n.
  \end{equation}

  Note that $A \subseteq F$ and fix $\sigma \in \{-, +\}$ so that $|F^{-\sigma} \cap A| \ge |F^{\sigma} \cap A|$.
  By \eqref{Fsigmasize} and the selection of $\sigma$, we have that 
  \begin{equation}\label{FsigmaAsize}
    |F^{-\sigma} \cap A| \ge (|F^+ \cap A| + |F^- \cap A|)/2 \ge (|A| - 4\xi n)/2 = |A|/2 - 2 \xi n. 
  \end{equation}
  Note that Lemma~\ref{strong1}, with \eqref{ACsigma}, implies that 
  \begin{equation*}
    |A \setminus \strong{C^\sigma}{\sigma}{\beta}| \le \beta n/2,
  \end{equation*}
  and Lemma~\ref{strong1}, with \eqref{BFsigma} and \eqref{FsigmaAsize}, implies that
  \begin{equation*}
    |\strong{B}{\sigma}{\beta} \cap F^{-\sigma} \cap A| \ge |A|/2 - \beta n/2. 
  \end{equation*}
  Therefore, if we set 
  \begin{equation*}
    A' = \strong{B}{\sigma}{\beta} \cap \strong{C^\sigma}{\sigma}{\beta} \cap A,  
  \end{equation*}
  then
  \begin{equation*}
    |A'| \ge |\strong{B}{\sigma}{\beta} \cap F^{-\sigma} \cap A| - 
    |A \setminus \strong{C^\sigma}{\sigma}{\beta}|
    \ge |A|/2 - \beta n.
  \end{equation*}
  Since $\cyc(A') \le \cyc(A) \le \alpha n^3$, Lemma~\ref{degree}
  implies that there exists  $x \in A'$ such that 
  \begin{equation*}
    d^\sigma(x, A') \ge |A'| - \beta n \ge |A|/2 - 2 \beta n.
  \end{equation*}
  Recall that $x \in \strong{B}{\sigma}{\beta} \cap \strong{C^\sigma}{\sigma}{\beta}$.
  This implies that
  \begin{equation*}
    d^\sigma(x, B) \ge |B| - \beta n,
  \end{equation*}
  and, with \eqref{Csigmasize}, we have that 
  \begin{equation*}
    d^\sigma(x, C^\sigma) \ge |C^\sigma| - \beta n \ge |C|/2 - 2\beta n.
  \end{equation*}
  Therefore, with the fact that $|B| \ge \eta n$, we have 
  \begin{equation*}
    d^\sigma(x) \ge 
    (|A|/2 - 2\beta n) + (|B| - \beta n) + (|C|/2 - 2 \beta n)
    = n/2 + |B|/2  - 5 \beta n > (1/2 + c)n,
  \end{equation*}
  so $d^{-\sigma}(x) < (1/2 - c) n$, 
  a contradiction to the minimum semidegree condition.
\end{proof}

\subsection{Proof of Lemma~\ref{lem:not-closed-2}}\label{sec:not-closed-2}

We need the following lemma for the proof of Lemma~\ref{lem:not-closed-2}.
Informally, it says that, in an oriented graph $G$ with sufficiently high minimum semidegree,
if $A$ is a reasonably large subset of $V(G)$
for which $\cyc(A, A, \overline{A})$ is $o(n^3)$, then 
essentially half of the vertices in $\overline{A}$ 
have almost all of $A$ as out-neighbors and 
half of the vertices in $\overline{A}$ 
have almost all of $A$ as in-neighbors.
From this, we then argue that $|A|$ cannot be much larger than $n/3$.

At a high-level, we use Lemma~\ref{main-not-closed-2} to prove Lemma~\ref{lem:not-closed-2} 
in the following way
(the actual proof of Lemma~\ref{lem:not-closed-2} appears 
after the proof of Lemma~\ref{main-not-closed-2}). 
We start with a non-trivial 
$\eta$-partition of $G$ such that $\ulattice(H(G))$ is $2$-transferral-free
and such that there exists $A \in \mathcal{P}$ such that $\cyc(A, A, \overline{A})$ is
$o(n^3)$.  We then use Lemma~\ref{main-not-closed-2} to get 
a partition $\{S^+, S^-\}$ of $\overline{A}$  such that $S^+$ and $S^-$ have
roughly the same size and such that $e^+(A, S^-)$ and $e^+(S^+, A)$ are both $o(n^2)$.
We can then finish the proof by showing that $A$, $S^+$ and $S^-$ each have 
roughly the same size
(which is implied if $|S^+|$ is not much more than $n/3$)
and that $e^+(S^-, S^+)$ is $o(n^2)$.
To this end, we use the fact that 
$\ulattice(H(G))$ is $2$-transferral-free to show that 
$\cyc(S^+, S^+, \overline{S^+})$ is $o(n^3)$,
and then apply
Lemma~\ref{main-not-closed-2} again with $S^+$ now playing the role of $A$.
\begin{lemma}\label{main-not-closed-2}
  Suppose that 
  \begin{equation*}
    1/n \ll c \ll \alpha \ll \beta \ll \xi \ll \eta < 1,
  \end{equation*}
  and $G = (V, E)$ is an oriented graph such that $\delta^0(G) \ge (1/2 - c)n$.
  For every $A \subseteq V$ such that $|A| > \eta n$ and $\cyc(A, A, \overline{A}) \le \alpha n^3$,
  we have that 
  \begin{itemize}
    \item $|\SN^+_\beta(A)|, |\SN^-_\beta(A)| = |\overline{A}|/2 \pm \xi n$, and 
    \item $|A| \le (1/3 + \xi)n$.
  \end{itemize}
\end{lemma}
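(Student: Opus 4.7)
The plan is to partition $\overline{A}$ into $S^+ := \strong{A}{+}{\beta}$, $S^- := \strong{A}{-}{\beta}$, and $R := \overline{A} \setminus (S^+ \cup S^-)$, show that $R$ is small, and then extract both conclusions from degree sums via Lemma~\ref{in=out}. First I would record two easy facts. The sets $S^+$ and $S^-$ are disjoint: a vertex $v$ in both would satisfy $d^+(v, A) + d^-(v, A) \ge 2(|A| - \beta n) > |A|$ (using $|A| > \eta n \gg \beta n$), contradicting $d^+(v, A) + d^-(v, A) \le |A|$. And for any $v \in R$, the minimum semidegree forces $d^+(v, A) + d^-(v, A) \ge |A| - 2cn$, while $v \notin S^+ \cup S^-$ gives both $d^+(v, A) < |A| - \beta n$ and $d^-(v, A) < |A| - \beta n$; combining, both $d^+(v, A)$ and $d^-(v, A)$ lie in $[\beta n/2, |A| - \beta n]$.

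The crux is the bound $|R| \le \xi n/2$. Since $\sum_{v \in \overline{A}} \cyc(v, A, A) = \cyc(A, A, \overline{A}) \le \alpha n^3$, it suffices to prove that every $v \in R$ satisfies $\cyc(v, A, A) \ge \beta^2 n^2/10$; combined with the hierarchy $\alpha \ll \beta \ll \xi$, this forces $|R| \le 10 \alpha n / \beta^2 \le \xi n / 2$. Writing $A^+ := N^+(v, A)$ and $A^- := N^-(v, A)$, one has $\cyc(v, A, A) = e^+(A^+, A^-)$, and the total number of edges between $A^+$ and $A^-$ (in either direction) is at least $|A^+| |A^-| - 2cn \cdot |A^+| \ge \beta^2 n^2 / 4 - O(cn^2)$, since each vertex of $V$ has at most $2cn$ non-neighbors. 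The main obstacle is to argue that a constant fraction of these edges actually point from $A^+$ to $A^-$. For this I would invoke the identity $d^{-,+}(vu) - d^{+,-}(vu) = \pm 4cn$ from Lemma~\ref{p0} on each edge $vu$ with $u \in A^+$, sum over $u \in A^+$, and carefully bound the leakage terms $\sum_u d^{\pm,\mp}(vu, \overline{A})$ coming from the $\overline{A}$-portion. An alternative route, in the spirit of Lemma~\ref{degree}, is to apply Lemma~\ref{strong1} to $(A^+, A^-)$ under the assumption $e^+(A^+, A^-) \le \alpha^{1/2} n^2$ and derive a contradiction via a global triangle count on edges in $E(A)$.

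Given $|R| \le \xi n / 2$, the remaining bounds follow from two applications of Lemma~\ref{in=out}. On the one hand, $\sum_{v \in \overline{A}} d^-(v, A) = e^+(A, \overline{A}) = |A||\overline{A}|/2 \pm c|A|n$, and the near-extremal value $d^-(v, A) \ge |A| - \beta n$ for $v \in S^+$ forces $|S^+| \le |\overline{A}|/2 + O((c+\beta)n/\eta)$, with a symmetric bound for $|S^-|$. On the other hand, $v \in S^+$ implies $d^+(v, A) \le \beta n$, hence $d^+(v, \overline{A}) \ge (1/2 - c - \beta)n$; summing over $v \in S^+$ and using $\overline{A} = S^+ \cup S^- \cup R$,
\[
|S^+|(1/2 - c - \beta)n \le e^+(S^+, \overline{A}) \le \tfrac{1}{2}|S^+|^2 + |S^+|(|S^-| + |R|) = |S^+|(|\overline{A}| - |S^+|/2),
\]
which rearranges to $|S^+| \le 2|\overline{A}| - n + O((c+\beta)n)$, and symmetrically for $|S^-|$. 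Adding these together with $|S^+| + |S^-| + |R| = |\overline{A}|$ and $|R| \le \xi n/2$ produces $3|\overline{A}| \ge 2n - O((c+\beta+\xi)n)$, equivalently $|A| \le (1/3 + \xi)n$. Finally, $|S^\pm| = |\overline{A}|/2 \pm \xi n$ follows from combining the upper bound $|S^\pm| \le |\overline{A}|/2 + O((c+\beta)n/\eta)$ with $|S^+| + |S^-| \ge |\overline{A}| - \xi n/2$.
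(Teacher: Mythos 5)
Your overall decomposition — splitting $\overline{A}$ into $S^+=\SN^+_\beta(A)$, $S^-=\SN^-_\beta(A)$, and a residual set $R$, showing $R$ is small, and then extracting both conclusions from Lemma~\ref{in=out} and edge counts — mirrors the paper's proof, and the concluding arithmetic is sound. (Your double-counting of $e^+(S^+,\overline{A})$ and $e^+(S^-,\overline{A})$ to get $|A|\le(1/3+\xi)n$ is a valid, slightly different route from the paper's, which instead picks a single vertex $x\in S^+$ with $d^+(x,S^+)<|S^+|/2$ and contradicts the semidegree bound.)

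The gap is in the crucial step that $|R|$ is small. You reduce this to a per-vertex claim: every $v\in R$ satisfies $\cyc(v,A,A)=e^+(N^+(v,A),N^-(v,A))\ge\beta^2 n^2/10$. Neither of your proposed routes establishes this. If you sum the identity $d^{-,+}(vu)-d^{+,-}(vu)=\pm 4cn$ over $u\in N^+(v,A)$, the $A$-parts give $\cyc(v,A,A)-e(N^+(v,A))$ as desired, but the $\overline{A}$-parts leave a leakage term $\cyc(v,N^+(v,A),\overline{A})-\sum_u d^{+,-}(vu,\overline{A})$ whose first piece can be of order $|N^+(v,A)|\cdot|\overline{A}|$, easily overwhelming $e(N^+(v,A))\approx |N^+(v,A)|^2/2$; there is no per-vertex bound available for it. Lemma~\ref{strong1} applied to $(N^+(v,A),N^-(v,A))$ likewise only tells you about the sizes of strong neighborhoods, not that $e^+$ is large. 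The paper avoids the per-vertex statement entirely: it bounds the quantity $\sum_{x\in\overline{A}}d^+(x,A)d^-(x,A)$ (not $\sum_x\cyc(x,A,A)$), re-expresses it (up to an $O(cn^3)$ non-edge correction) as $\sum_{xy\in E(A)}\bigl(d^{-,+}(xy,\overline{A})+d^{+,-}(xy,\overline{A})\bigr)$, and then applies Lemma~\ref{p0} summed over all of $E(A)$. That summation turns the $\overline{A}$-leakage terms into $A$-internal counts $3\cyc(A)-\trn(A)$, which the paper shows is at most $4cn^3$ by convexity of $\binom{x}{2}$ since $\trn(A)$ is near-maximal. Both the ``sum over $E(A)$'' reformulation and the $3\cyc(A)-\trn(A)$ estimate are essential and absent from your write-up; the alternative you gesture at (``a global triangle count on edges in $E(A)$'') is in the right spirit but is not developed into an argument.
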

\begin{proof}
  Let $S^+ = \SN^+_\beta(A)$, $S^- = \SN^-_\beta(A)$ and $S = V \setminus (S^+ \cup S^- \cup A)$,
  so $\{S, S^+, S^-, A\}$ is a partition of $V$.
  We will first prove that 
  \begin{equation}\label{Ssmall}
    |S| \le \beta n.
  \end{equation}
  To this end, let $x \in S \subseteq \overline{A}$ 
  and note that $x$ must have at least $\beta n - 2c n \ge \beta n/2$ out-neighbors and in-neighbors in $A$. 
  Therefore, 
  \begin{equation}\label{initial_bound}
    \begin{split}
      |S| (\beta n)^2/4 &\le \sum_{x \in S} d^+(x, A) \cdot d^-(x, A) \le \sum_{x \in \overline{A}} d^+(x, A) \cdot d^-(x, A) \\ 
      &= \sum_{xy \in E(A)} d^{-,+}(xy,\overline{A}) + d^{+,-}(xy, \overline{A}) 
      = \cyc(A, A, \overline{A}) + \sum_{xy \in E(A)} d^{+,-}(xy, \overline{A}).
    \end{split}
  \end{equation}
  By Lemma~\ref{p0}, we have that
  \begin{equation}\label{second_bound}
    \begin{split}
      \sum_{xy \in E(A)} d^{+,-}(xy, \overline{A})  &=
    \sum_{xy \in E(A)} \left(d^{+,-}(xy) - d^{+,-}(xy, A)\right) \\
    &\le \sum_{xy \in E(A)} \left(d^{-,+}(xy) - d^{+,-}(xy, A) + 4cn\right) \\
    &= \sum_{xy \in E(A)} \left(d^{-,+}(xy, \overline{A}) + d^{-,+}(xy, A) - d^{+,-}(xy, A) + 4cn\right)  \\
    &= \cyc(A, A, \overline{A}) + 3\cyc(A) -\trn(A) + 4cn \cdot e(A).
  \end{split}
  \end{equation}
  Because $e(A)/|A| = \sum_{v \in A} d^+(v, A)/|A| \ge \frac{|A| - 2cn}{2}$ and $f(x) = \binom{x}{2}$ is convex, we have that 
  \begin{equation*}
    \trn(A) = \sum_{v \in A}{\binom{d^+(v,A)}{2}} \ge |A|\binom{e(A)/|A|}{2} 
    \ge \frac{|A|(|A| - 2cn - 1)^2}{8} \ge \frac{|A|^3}{8} - c n^3
  \end{equation*}
  Therefore, since $\cyc(A) + \trn(A) \le \binom{|A|}{3}$,
  \begin{equation}\label{third_bound}
    3\cyc(A) - \trn(A) \le 3 \binom{|A|}{3} - 4 \trn(A) \le 4 c n^3
  \end{equation}
  Combining \eqref{initial_bound}, \eqref{second_bound} and \eqref{third_bound}, we have that
  \begin{equation*}
    |S| (\beta n)^2/4 \le 2 \cyc(A, A, \overline{A}) + 8 cn^3 \le  3 \alpha n^3,
  \end{equation*}
  and \eqref{Ssmall} holds.

  For $\sigma \in \{+, -\}$, we have that
  $e^{\sigma}(A, \overline{A}) \ge (|A| - \beta n)\cdot|S^\sigma|$
  and, by \eqref{Ssmall} and the definition of $S^{-\sigma}$,
  \begin{equation*}
    e^{\sigma}(A, \overline{A}) \le |A|\cdot|S^{\sigma}| + (|A| - \beta n)\cdot|S| +  \beta n \cdot |S^{-\sigma}|
    \le |A|\cdot|S^{\sigma}| + 2 \beta n^2,
  \end{equation*}
  so $|A| \cdot |S^\sigma| = e^\sigma(A, \overline{A}) \pm 2 \beta n^2$.
  With Lemma~\ref{in=out}, we have that
  \begin{equation*}
    |A| \cdot |S^+| = e^+(A, \overline{A}) \pm 2 \beta n^2 = e^-(A, \overline{A}) \pm 4 \beta n^2 = |A| \cdot |S^-| \pm 6 \beta n^2.
  \end{equation*}
  Therefore, because $|A| \ge \eta n$, we have that $|S^+| = |S^-| \pm \xi n$.
  This with \eqref{Ssmall} implies that
  \begin{equation*}
    |\overline{A}| = n - |A| = |S^+| + |S^-| + |S| = 2 |S^{\sigma}| \pm 2\xi n,
  \end{equation*}
  which proves the first part of the lemma.

  We will prove the second part of the lemma by contradiction,
  so assume that $|A| > (1/3 +  \xi)n$. 
  There exists $x \in S^+$ such that $d^+(x, S^+) < |S^+|/2$.
  Using the fact that
  $|\overline{A}| < (2/3 - \xi)n$, and,
  by the first part of the lemma,
  $|S^+| \ge |\overline{A}|/2 - \xi n$,
  we have that
  \begin{equation*}
    d^+(x, \overline{A}) < |S| + |S^-| + |S^+|/2 =
    |\overline{A}| - |S^+|/2 \le 
    |\overline{A}| - |\overline{A}|/4 + \xi n/2 < n/2  - \xi n/4.
  \end{equation*}
  Because $x \in S^+$, we have that $d^+(x, A) \le \beta n$, so
  \begin{equation*}
    d^+(x) = d^+(x, A) + d^+(x, \overline{A}) < (1/2 - c)n,
  \end{equation*}
  a contradiction to the minimum semidegree condition.
\end{proof}

\begin{proof}[Proof of Lemma~\ref{lem:not-closed-2}] 
  Define $\xi$, $\beta$ and $\omega$ so that 
  \begin{equation*}
  0 < 1/n \ll c \ll \mu, \alpha \ll \xi \ll \beta \ll \omega \ll \gamma, \eta < 1.
  \end{equation*}
  By assumption, there exists
  $\mathcal{P}$ a non-trivial $\eta$-partition of $V(G)$ such that $\ulattice(H(G))$ is $2$-transferral-free
  and $A \in \mathcal{P}$ such that $\cyc(A, A, \overline{A}) \le \alpha  n^3$.
  Since $\mathcal{P}$ is an $\eta$-partition, we have that
  \begin{equation}\label{Alarge}
    |A| \ge \eta n.
  \end{equation}

  For subsets $U_1, U_2, U_3, U_4$ of $V$, 
  let $L(U_1, U_2, U_3, U_4)$ be the collection of $4$-sets
  $\{u_1, u_2, u_3, u_4\}$ such that $u_i \in U_i$ for $i \in [4]$
  and both $u_1u_2u_3$ and $u_2u_3u_4$ are cyclic triangles.
  We can assume that
  \begin{equation}\label{lAVVAbar-small}
    |L(A, V, V, \overline{A})| \le \xi n^4, 
  \end{equation}
  because otherwise, since $|\mathcal{P}| \le 1/\eta$, 
  there would exist $B, C \in \mathcal{P}$ and $D \in \mathcal{P} - A$ 
  such that $|L(A, B, C, D)| \ge \eta^3 \cdot \xi n^4 \ge \mu n^4$, which would in turn imply that
  both $c(A, B, C) \ge \mu n^3$ and $c(B, C, D) \ge \mu n^3$, which contradicts
  the fact that $\ulattice(H(G))$ is $2$-transferral-free.

  Define $S^\sigma = \strong{A}{\sigma}{\beta}$ for $\sigma \in \{+, -\}$, 
  and let $S = V \setminus (S^+ \cup S^- \cup A)$.
  Lemma~\ref{main-not-closed-2} implies that 
  \begin{equation}\label{sizeofZ}
    |S| \le 2\xi n,
  \end{equation}
  and that 
  \begin{equation}\label{sizeofA}
    |A| \le (1/3 + \xi)n.
  \end{equation}
  For $\sigma \in \{+, -\}$, suppose that there exists 
  $xyz$ a triangle with $xy \in E(S^\sigma)$ and $z \in S^{-\sigma}$,
  i.e., $xyz$ is one of the triangle that is counted in $\cyc(S^\sigma, S^\sigma, S^{-\sigma})$.
  If $\sigma = +$, then for every $w \in N^{-,+}(yz, A)$, we have $\{w,y,z,x\} \in L(A, S^+, S^-, S^+)$,
  and if $\sigma = -$, then for every $w \in N^{-,+}(zx, A)$, we have $\{w,x,z,y\} \in L(A, S^-, S^+, S^-)$.
  Therefore, because $d^{-\sigma}(x, A), d^{-\sigma}(y, A), d^{\sigma}(z, A) \ge |A| - \beta n$,
  \eqref{lAVVAbar-small} implies that
  \begin{equation*}
    (|A| - 2 \beta n) \cdot c(S^\sigma, S^\sigma, S^{-\sigma})
    \le |L(A, S^\sigma, S^{-\sigma}, S^\sigma)| 
    \le \xi n^4. 
  \end{equation*}
  Therefore, with \eqref{Alarge}, we have
  \begin{equation*}
    \cyc(S^\sigma, S^\sigma, S^{-\sigma}) \le \beta n^3.
  \end{equation*}
  Furthermore,
  \begin{equation*}
    \cyc(S^\sigma, S^\sigma, A) \le 2\beta n \cdot \binom{|S^\sigma|}{2} \le \beta n^3, 
  \end{equation*}
  and, because \eqref{sizeofZ} implies that
  \begin{equation*}
    \cyc(S^\sigma, S^\sigma, S) \le 2\beta n^3, 
  \end{equation*}
  we can deduce that 
  \begin{equation*}
    \cyc(S^\sigma, S^\sigma, \overline{S^\sigma}) = \cyc(S^\sigma, S^\sigma, S^{-\sigma}) + \cyc(S^\sigma, S^\sigma, A) + \cyc(S^\sigma, S^\sigma, S) 
    \le 4 \beta n^3.
  \end{equation*}
  Applying Lemma~\ref{main-not-closed-2} with $S^\sigma$, $4 \beta$ and $\omega$ playing the roles of $A$, $\alpha$ and $\xi$, respectively, 
  implies that 
  \begin{equation}\label{ubAsigma}
    |S^+|, |S^-| \le (1/3 + \omega)n.
  \end{equation}

  Let $A' = A \cup S$ and note that $\{A', S^+, S^-\}$ is a partition of $V(G)$.
  By \eqref{sizeofZ}, we have that 
  \begin{equation*}
    d^{-\sigma}(v, A') \ge |A| - \beta n \ge |A'| - \omega n \text{ for every $v \in S^{\sigma}$}.
  \end{equation*}
  which implies that 
  \begin{equation}\label{AsigmaV1edges}
    e^{\sigma}(S^{\sigma}, A') \le |S^{\sigma}| \cdot \omega n \le \omega n^2, 
  \end{equation}
  and, also, with Lemma~\ref{in=out}, that
  \begin{equation*}
    e^+(S^+, \overline{S^{+}}) \ge e^+(\overline{S^+}, S^+) - cn^2 \ge e^+(A', S^{+}) \ge (|A'| - \omega n)|S^{+}|.
  \end{equation*}
  Combining this with \eqref{sizeofZ}, \eqref{sizeofA}, \eqref{ubAsigma} and \eqref{AsigmaV1edges} gives us that 
  \begin{equation*}
    e^+(S^+, S^-) = e^+(S^+, \overline{S^+}) - e^+(S^+, A') \ge (|A'| - \omega n)|S^+| - \omega n^2 \ge (|S^-| - 3\omega n)|S^+|,
  \end{equation*}
  so, with \eqref{AsigmaV1edges}, we have that 
  \begin{equation*}
    e^+(S^-, S^+), e^+(S^+, A'), e^+(A', S^-) \le \gamma n^2.
  \end{equation*}
  Also, \eqref{sizeofZ}, \eqref{sizeofA} and \eqref{ubAsigma} imply that $|A'|, |S^+|, |S^-| \le n/3 + \omega n$, so
  \begin{equation*}
    |A'|, |S^+|, |S^-| = n/3 \pm 2\omega n = n/3 \pm \gamma n,
  \end{equation*}
  and $G$ is $\gamma$-extremal.
\end{proof}

\subsection{Proof of Theorem~\ref{lem:extremal} - The Extremal Case}\label{sec:extremal}
We use the following theorem from \cite{johansson2000triangle}, 
although we do not require its full strength.  In fact, we only use it for 
corollary which follows that could easily be proved directly.
\begin{thm}[Johansson 2000 \cite{johansson2000triangle}]\label{balanceR}
  Let $G = (V, E)$ be a graph.  If there exists $\{V_1, V_2, V_3\}$ a partition of $V$
  such that $|V_1| = |V_2| = |V_3| = m$ and, for every $i \in [3]$ and $v \in V_i$, 
  $|N(v) \cap V_{i+1}|$, $|N(v) \cap V_{i-1}| \ge \frac23m+\sqrt{m}$ then $G$ has a triangle factor.
\end{thm}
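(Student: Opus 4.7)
My plan is to apply the absorbing method in the tripartite setting, proceeding in three stages. First, I would build a small balanced absorbing set $F \subseteq V$; second, find a near-perfect triangle tiling of $G - F$ leaving only a tiny balanced remainder; third, absorb the remainder using $F$ to complete the factor.

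For the absorbing set, given a balanced triple $T = (x_1, x_2, x_3) \in V_1 \times V_2 \times V_3$, I would call a constant-size vertex set $A \subseteq V \setminus \{x_1, x_2, x_3\}$ a \emph{$T$-absorber} if both $G[A]$ and $G[A \cup \{x_1, x_2, x_3\}]$ admit a triangle factor. Under the degree condition $|N(v) \cap V_{i \pm 1}| \ge \tfrac{2}{3}m + \sqrt{m}$, two-fold intersections of neighborhoods in a given part have size at least $\tfrac{m}{3} + 2\sqrt{m}$ and three-fold intersections at least $3\sqrt{m}$, which lets me show that every triple admits $\Omega(m^{|A|})$ absorbers of a carefully designed structure consisting of two or three disjoint triangles linked by a few prescribed cross-edges to $T$. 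A standard probabilistic selection --- include each potential absorber independently with a small probability and delete overlaps --- then yields a vertex-disjoint family $\mathcal{A}$ whose vertex set $F$ is balanced across $V_1, V_2, V_3$, has size at most $\varepsilon m$, and contains $\Omega(\varepsilon m)$ absorbers for every balanced triple in $V \setminus F$.

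After removing $F$, the auxiliary $3$-partite $3$-uniform hypergraph $H$ whose hyperedges are the balanced triangles of $G - F$ is nearly regular with vertex degrees $\Theta(m^2)$ and codegrees $O(m)$, so the Pippenger--Spencer nibble produces a matching in $H$ covering all but $o(m)$ vertices, balanced across the parts. The remaining $o(m)$ triples are then absorbed one at a time using the $T$-absorbers reserved in $F$, giving a triangle factor of $G$.

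The main obstacle is the absorber count in the first stage: the $\sqrt{m}$ slack in the degree condition is so thin that an inclusion--exclusion lower bound over four or more simultaneous neighborhoods becomes trivially negative. I therefore need to engineer the absorber so that its edge constraints can be parsed as a sequence of at most three-way neighborhood intersections, each retaining linear order, and verify that chaining these intersections preserves an $\Omega(m^{|A|})$ total count for every triple. This delicate combinatorial design --- ensuring that the $\sqrt{m}$ slack is never invoked more than twice within a single intersection --- is where the exact hypothesis $\tfrac{2}{3}m + \sqrt{m}$ gets its payoff, and it is the place where a sloppier choice of absorber would force a strictly larger degree hypothesis.
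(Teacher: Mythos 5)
The paper does not prove Theorem~\ref{balanceR} at all; it cites it as Johansson's result from \cite{johansson2000triangle} and remarks that only the weaker Corollary~\ref{balance} is actually used, which ``could easily be proved directly.'' Johansson's own argument is a direct, elementary combinatorial proof that predates the absorbing method by several years. Your plan, which replaces that argument with absorption plus a nibble, is therefore a genuinely different route --- but it contains a gap at the step you yourself flag as ``the main obstacle,'' and I do not believe the gap can be closed in the framework you describe.

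The issue is the absorber count. Suppose $A$ is any balanced absorber with $a_i,b_i\in V_i$ whose internal factor is $\{a_1a_2a_3,\,b_1b_2b_3\}$ and whose $T$-augmented factor uses three triangles, one through each $x_i$. Chase the adjacencies: in every such design at least one absorber vertex, say $a_1\in V_1$, must be adjacent to two prescribed vertices in $V_2$ (one from each factor, plus possibly $x_2$) and two prescribed vertices in $V_3$. When that vertex is chosen last among its neighbours, the number of candidates is bounded below only by
\begin{equation*}
  2\left(\tfrac{1}{3}m + 2\sqrt{m}\right) - m = -\tfrac{1}{3}m + 4\sqrt{m},
\end{equation*}
which is negative for $m\ge 145$. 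Re-ordering the selection merely relocates the overload: an averaging count shows the bipartite ``constraint graph'' on $A$ has average degree at least $4 - O(1/k)$ when $|A|=3k$, so some vertex inevitably sees at least three previously-fixed neighbours, at which point inclusion--exclusion yields only $\ge 3\sqrt{m}$ candidates. Either way the per-triple absorber count is $O(\sqrt{m}\cdot m^{|A|-1})=o(m^{|A|})$, not $\Omega(m^{|A|})$ as your sketch requires. Consequently the probabilistic deletion step cannot deliver a balanced absorbing set of size $O(\varepsilon m)$ in which every triple retains $\Omega(\varepsilon m)$ absorbers; one would need either an absorbing set of size $\Omega(m^{3/2})$ or a nibble leaving only $o(\sqrt m)$ vertices, and neither is available. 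Your closing paragraph asserts that a ``delicate combinatorial design'' sidesteps this, but no such design is exhibited, and the averaging argument above suggests none exists: any absorber valid for an \emph{arbitrary} balanced triple forces at least one vertex into four independent neighbourhood constraints, which the $\sqrt m$ slack simply cannot support. This is precisely why Johansson's proof avoids absorption entirely and why the sharp $\frac{2}{3}m+\Theta(\sqrt m)$ threshold is beyond the reach of the vanilla absorbing method.
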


\begin{cor}\label{balance}
  Suppose that $0< 1/n \ll c, \xi \ll 1$, $n$ is a multiple of $3$, and 
  $G = (V,E)$ is an oriented graph on $n$ vertices such that $\delta^0(G) \ge \left(\frac12 - c\right)n$.
  If there exists a partition $\{V_1, V_2, V_3\}$ of $V$ such that 
  $|V_1|\equiv|V_2|\equiv|V_3| \pmod 3$ and, for every $i\in [3]$ and $v\in V_i$, 
  we have $d^{+}(v, V_{i+1})$, $d^{-}(v, V_{i-1}) \geq \left(\frac13-\xi\right)n$, then $G$ has a cyclic triangle factor.
\end{cor}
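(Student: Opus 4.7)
The plan is to reduce Corollary~\ref{balance} to Theorem~\ref{balanceR} (Johansson's theorem) by first absorbing the imbalance in the part sizes with a small collection $\mathcal{C}_0$ of vertex-disjoint cyclic triangles contained inside single parts, and then applying Theorem~\ref{balanceR} to the resulting balanced tripartition. First I would observe that the hypothesis forces $|V_j| \ge (1/3-\xi)n$ for every $j$, hence $|V_i| = n/3 \pm O(\xi n)$. Setting $m = \min_i |V_i|$, the congruence $|V_1|\equiv|V_2|\equiv|V_3| \pmod 3$ ensures that each $|V_i| - m$ is a non-negative multiple of $3$ bounded by $O(\xi n)$, so it suffices to exhibit, for each $i$, a collection of $(|V_i|-m)/3$ vertex-disjoint cyclic triangles of $G$ lying entirely inside $V_i$.

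To build these triangles, I would show that each $G[V_i]$ is close to a near-regular tournament. The one-sided forward bound $e^+(V_i, V_{i+1}) \ge |V_i|(1/3-\xi)n$ combined with the trivial upper bound $e^+(V_i, V_{i+1}) + e^-(V_i, V_{i+1}) \le |V_i||V_{i+1}|$ gives $e^-(V_i, V_{i+1}) = O(\xi n^2)$, and summing the minimum semidegree condition over $V_i$ then forces $e(V_i) = \binom{|V_i|}{2} - O((c+\xi)n^2)$. Consequently, all but $O(\sqrt{\xi}\,n)$ ``good'' vertices $v \in V_i$ satisfy $d^+(v, V_i), d^-(v, V_i) \ge (1/2 - O(\sqrt{\xi}))|V_i|$. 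Letting $V_i^\ast$ be this good set, Lemma~\ref{min_vertex_degree} applied inside $V_i^\ast$ puts every vertex of $V_i^\ast$ into $\Omega(n^2)$ cyclic triangles of $G[V_i^\ast]$. A straightforward greedy argument then extracts $(|V_i|-m)/3 \le \xi n$ vertex-disjoint cyclic triangles inside $G[V_i^\ast]$: at each step only $O(\xi n)$ vertices have been removed, so $\Omega(n^3)$ cyclic triangles remain.

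Let $\mathcal{C}_0$ be the union of these triangles over all $i$, and set $V_i' = V_i \setminus V(\mathcal{C}_0)$, so $|V_1'| = |V_2'| = |V_3'| = m$. For every $v \in V_i'$,
\begin{equation*}
  d^+(v, V_{i+1}'), \; d^-(v, V_{i-1}') \;\ge\; (1/3-\xi)n - |V(\mathcal{C}_0)| \;\ge\; \tfrac{2}{3}m + \sqrt{m},
\end{equation*}
so Theorem~\ref{balanceR}, applied to the underlying undirected graph whose edges are the $G$-edges oriented consistently with the cyclic structure (so that $N(v) \cap V_{i+1}' = N^+(v) \cap V_{i+1}'$ and $N(v) \cap V_{i-1}' = N^-(v) \cap V_{i-1}'$ for $v \in V_i'$), yields a triangle factor of the reduced graph whose triangles are precisely cyclic triangles of $G$; combining this factor with $\mathcal{C}_0$ gives the desired cyclic triangle factor. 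The main obstacle is the middle step: the hypothesis only gives one-sided degree bounds into adjacent parts, so the edge-count analysis that converts the forward semidegree assumption into a usable minimum semidegree condition inside each $V_i$ — including isolating the $O(\sqrt{\xi}\,n)$ bad vertices with too many backward out-neighbors in $V_{i-1}$ — is the crux of the proof.
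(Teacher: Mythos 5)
Your proposal follows the same overall strategy as the paper: extract a small collection of cyclic triangles contained inside individual parts so as to equalize the part sizes, then apply Theorem~\ref{balanceR} to the resulting balanced tripartition (via the auxiliary undirected graph you describe). The greedy extraction, the observation that the congruence hypothesis makes $(|V_i|-m)/3$ an integer, and the final application of Johansson's theorem all match the paper's argument.

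The middle step, however, is both overcomplicated and, as written, has a gap. You estimate $e(V_i) = \binom{|V_i|}{2} - O\bigl((c+\xi)n^2\bigr)$ and then say that ``consequently'' all but $O(\sqrt{\xi}\,n)$ vertices $v\in V_i$ have $d^+(v,V_i), d^-(v,V_i) \ge \bigl(1/2 - O(\sqrt{\xi})\bigr)|V_i|$. But an edge count controls only the sum $d^+(v,V_i)+d^-(v,V_i)$, not the in/out split, so this inference is a non sequitur. You flag the ``isolating bad vertices with too many backward out-neighbors'' step as the crux, and it could indeed be patched by a Markov-type argument on $e^+(V_i,V_{i-1})$ — but no such patch is necessary. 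The hypothesis already gives the pointwise bound: for every $v\in V_i$, since $d^-(v,V_{i-1}) \ge (1/3-\xi)n$ and $|V_{i-1}| \le (1/3+2\xi)n$, one has $d^+(v,V_{i-1}) \le 3\xi n$, and likewise $d^-(v,V_{i+1}) \le 3\xi n$; combined with $\delta^0(G)\ge(1/2-c)n$ this yields $\delta^0(G[V_i]) \ge (1/2 - O(c+\xi))|V_i|$ for \emph{every} vertex of $V_i$, with no exceptional set. This is what the paper does, and it makes the edge-counting and ``good vertex'' machinery superfluous: Lemma~\ref{min_vertex_degree} applies directly to each $G[V_i]$, and the greedy extraction goes through immediately.
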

\begin{proof}
  Assume without loss of generality that $|V_1| \le |V_2| \le |V_3|$.
  For every $i \in [3]$, the degree condition implies that 
  $|V_i| = (1/3 \pm 2\xi)n$, so 
  \begin{equation*}
    \delta^0(G[V_i]) \ge (1/2 - c) n - 3\xi - (1/3 + 2 \xi)n \ge (1/2 - 4c - 16\xi)|V_i|. 
  \end{equation*}
  Therefore, by Lemma~\ref{min_vertex_degree}, we can greedily find
  a collection $\mathcal{C}_2$ of $(|V_2| - |V_1|)/3 \le 2 \xi n$ vertex-disjoint cyclic-triangles in $G[V_2]$, and 
  a collection $\mathcal{C}_3$ of $(|V_3| - |V_1|)/3 \le 2 \xi n $ vertex-disjoint cyclic-triangles in $G[V_3]$.
  Then, Theorem~\ref{balanceR} implies that there exists a cyclic triangle factor $\mathcal{C}$ of
  the oriented graph induced by $V \setminus V(\mathcal{C}_2 \cup \mathcal{C}_3)$, and
  $\mathcal{C} \cup \mathcal{C}_2 \cup \mathcal{C}_3$ is a triangle factor of $G$.
\end{proof}

\begin{proof}[Proof of Theorem~\ref{lem:extremal}]
  Since $G$ is $\gamma$-extremal, there exists a partition $\{V_1, V_2, V_3\}$ of $V$
  such that for every $i\in[3]$, $|V_i| =(1/3 \pm \gamma)n$ and $e^-(V_{i}, V_{i+1}) \leq \gamma n^2$.
  Introduce new constants $\alpha$ and $\beta$, so that
  \begin{equation*}
    0 < c, \gamma \ll \alpha \ll \beta \ll 1.
  \end{equation*}
  Let
  \begin{equation*}
    U_i = \strong{V_{i+1}}{-}{\beta} \cap \strong{V_{i-1}}{+}{\beta} \qquad \text{ for $i \in [3]$,}
  \end{equation*}
  and let $U_0 = V \setminus (U_1 \cup U_2 \cup U_3)$.
  Note that $\{U_0, U_1, U_2, U_3\}$ is a partition of $V$.
  For every $i \in [3]$ and every $v \in V_i \setminus U_i$, 
  at least one of $d^-(v, V_{i+1})$ or $d^+(v, V_{i-1})$ is at least $\beta n - 2cn \ge \beta n / 2$, so 
  \begin{equation*}
    \frac{1}{2} \sum_{i = 1}^{3} |V_i \setminus U_i| \cdot \beta n 
    \le 2e^-(V_1, V_2) + 2e^-(V_2, V_3) + 2e^-(V_3, V_1) \le 6 \gamma n^2,
  \end{equation*}
  and we have that 
  \begin{equation}\label{sizeofU0}
    |U_0| \le \sum_{i=1}^{3} |V_i \setminus U_i| \le \alpha n.
  \end{equation}

  Suppose that $|U_0| \ge 1$.
  By \eqref{sizeofU0} and Lemma~\ref{min_vertex_degree}, 
  we can greedily find a (possibly empty) collection $\mathcal{C}$ of vertex-disjoint cyclic-triangles such that 
  \begin{equation}\label{sizeofVC}
    |V(\mathcal{C})| \le 3(|U_0| - 1) \le 3 \alpha n
  \end{equation}
  and $|U_0 \cap V(\mathcal{C})| = |U_0| - 1$.
  Let $u$ be the vertex in $U_0$ that is not covered by $\mathcal{C}$, i.e., $\{u\} = U_0 \setminus  V(\mathcal{C})$.
  Let $G' = G - V(\mathcal{C})$, and, for $i \in [3]$, let $W_i = U_i \setminus V(\mathcal{C})$.
  Note that, by \eqref{sizeofU0} and \eqref{sizeofVC},
  \begin{equation}\label{sizeofWi}
    |V_i \setminus W_i| \le 4 \alpha n \qquad \text{ for $i \in [3]$}.
  \end{equation}
  Because $|G'| = |V \setminus V(\mathcal{C})|$ is divisible by $3$, we have that $|W_1| + |W_2| + |W_3| \equiv 2 \pmod 3$, 
  and we can assume without loss of generality that $|W_2| \equiv |W_3| \pmod 3$.
  Fix $x \in \{0, 1, 2\}$ so that
  \begin{equation*}
    |W_1| \equiv x + 2 \text{ and } |W_2| = |W_3| \equiv x \pmod 3.
  \end{equation*}
  If there exists a triangle $T$ in $G'$ such that $u \in V(T)$
  and either $|V(T) \cap W_1| = 2$ or $|V(T) \cap W_2| = |V(T) \cap W_3| = 1$, then 
  we can complete a cyclic triangle factor of $G$ using Corollary~\ref{balance}, so assume the contrary.
  
  Let $W^+_1 = N^+(u, W_1)$, $W^-_1 = N^-(u, W_1)$ and $W_1^0 = W_1 \setminus N(u)$.
  Note that $e^+(W^+_1, W^-_1) = 0$ by assumption,
  and that by the minimum semidegree condition,
  \begin{equation}\label{sizeofW10}
    |W_1^0| \le 2cn.
  \end{equation}
Assume that $|W^-_1| \le |W^+_1|$. If $W^-_1 \neq \emptyset$, then there exists $x \in W^-_1$ such that 
\begin{equation*}
    d^-(x, W^-_1) < |W^-_1|/2 \le |W_1|/4,
 \end{equation*}
 so, by \eqref{sizeofWi} and \eqref{sizeofW10},
   \begin{equation*}
    d^-(x, W^+_1 \cap V_1) \ge \delta^0(G) - \left(d^-(x, W^-_1 \cap V_1) + |W^0_1 \cap V_1| + |V_1 \setminus W_1| + d^-(x, V_2) + |V_3| \right) > 0,
  \end{equation*}
a contradiction. Therefore, we have $W^-_1 = \emptyset$.
  This, with \eqref{sizeofWi} and \eqref{sizeofW10}, implies that
  \begin{equation}\label{woutU1}
    d^+(u, V_1) \ge |W_1 \cap V_1| - |W^0_1| = |V_1| - |V_1 \setminus W_1| - |W^0_1| \ge |V_1| - \beta n,
  \end{equation}
  and
  \begin{equation*}
    d^-(u, W_3 \cap V_3) \ge \delta^0(G) - \left(d^-(u, V_1) + |V_2| + |V_3 \setminus W_3| \right) > (1/6 - 2\beta)n.
  \end{equation*} 
  By the definition of $W_2$, this implies that every vertex in $W_2$ has an 
  out-neighbor in $N^-(u, W_3 \cap V_3)$.
  Therefore, because $e^+(N^+(u, W_2), N^-(u, W_3)) = 0$ by assumption,
  we have that $N^+(u, W_2) = \emptyset$, and,
  by a computation similar to \eqref{woutU1} 
  \begin{equation*}
    d^-(u, V_2) \ge |W_2 \cap V_2| - |W_2 \setminus N(u)| = |V_2| - |V_2 \setminus W_2| - |W_2 \setminus N(u)| \ge |V_2| - \beta n,
  \end{equation*}
  and this with \eqref{woutU1} implies that $u \in U_3$, a contradiction to the fact that $u \in U_0$.
  By a similar argument, if $|W^-_1| \ge |W^+_1|$, one can show that $W_1^+=\emptyset$ and then $u \in U_2$, a contradiction.

  Now suppose that $|U_0| = 0$.
  By Lemma~\ref{balance}, we are done if $|U_1| \equiv |U_2| \equiv |U_3| \pmod 3$,
  so assume the contrary.
  If $G$ has no divisibility barriers,
  then $|V|$ is divisible by $3$, and,
  without loss of generality we can assume that 
  there exists an edge $ab$ such that $a \in U_2$ and $b \in U_1$.
  Because $|V|$ is divisible by $3$,
  we can fix $x \in \{0, 1, 2\}$ and $y \in \{1, 2\}$ so that 
  \begin{equation*}
    \text{$|U_1| \equiv x$, $|U_2| \equiv x + y$,  and $|U_3| = x - y \pmod 3$}
  \end{equation*}
  Note that 
  \begin{equation*}
    d^+(b, U_1) \ge \delta^0(G) - (|V_1 \setminus U_1| + |V_2| + d^+(b, V_3)) \ge (1/6 - 2\beta)n,
  \end{equation*}
  and a similar bound holds for $d^-(a, U_2)$, so
  there exists a vertex $c \in U_2 \cap N^+(b) \cap N^-(a)$, 
  and a vertex $d \in U_1 \cap N^+(b) \cap N^-(a)$. 
  Hence $T_1 = abc$ and $T_2 = abd$ are cyclic triangles
  and, for $j \in \{1, 2\}$,
  \begin{equation*}
    \text{$|V(T_j) \cap U_1| \equiv j$ and $|V(T_j) \cap U_2| \equiv 2j \pmod 3$}.
  \end{equation*}
  Therefore, Lemma~\ref{balance} implies that there exists a cyclic triangle factor 
  $\mathcal{C}$ of $G - T_y$, and
  $\mathcal{C} \cup \{T_y\}$ is a cyclic triangle factor of $G$.
\end{proof}

\hbadness 10000\relax
\bibliographystyle{amsplain}
\providecommand{\bysame}{\leavevmode\hbox to3em{\hrulefill}\thinspace}
\providecommand{\MR}{\relax\ifhmode\unskip\space\fi MR }
\providecommand{\MRhref}[2]{%
  \href{http://www.ams.org/mathscinet-getitem?mr=#1}{#2}
}
\providecommand{\href}[2]{#2}

\end{document}